\newcommand{\ds}[1]{\ {#1} \ }
\newcommand{\dss}[1]{\quad {#1} \quad }
\def\To{\longrightarrow}
\def\Mdot{\cdot_M}
\def\Ndot{\cdot_N}
\def\iso{\tilde{\to}}
\def\isoTo{\tilde{\To}}
\def\into{\hookrightarrow}
\def\zt{\zeta}
\def\00{\{ 0\}}
\def\chU{\widecheck{U}}
\def\chvrp{\widecheck{\vrp}}
\def\tng{\operatorname{t}}
\def\Mix{\operatorname{Mix}}
\def\Eq{\operatorname{Eq}}
\def\Feq{\operatorname{Feq}}
\def\T{\operatorname{T}}
\def\SS{\operatorname{S}}
\def\Is{\operatorname{Is}}
\def\Sis{\operatorname{Sis}}
\def\Tyr{\operatorname{Tyr}}
\def\IGS{\operatorname{IGS}}
\def\vrp{\varphi}
\def\m{m}
\def\t{t}
\def\brv{{\bar v}}
\def\iv{v^{-1}}
\def\itau{\tau^{-1}}
\def\htU{{\widehat U}}
\newcommand\isoto{\xrightarrow{
   \,\smash{\raisebox{-0.45ex}{\ensuremath{\scriptstyle\sim}}}\,}}
\def\igm{\gm^{-1}}
\def\irho{\rho^{-1}}
\def\ial{\al^{-1}}
\def\sig{\sigma}
\def\sm{\setminus}
\def\s{s}
\def\onto{\twoheadrightarrow}
\def\Onto{\; -\hskip-5pt\twoheadrightarrow}
\newcommand{\D}[1]{{\operatorname{D#1}}}
\newcommand{\SC}[1]{{\operatorname{S#1}}}
\def\tng{{\operatorname{t}}}
\def\bbN{\mathbb N}
\def\bbR{\mathbb R}
\newcommand{\etype}[1]{\renewcommand{\labelenumi}{(#1{enumi})}}
\def\STR{\operatorname{STR}}
\def\STROP{\operatorname{STROP}}
\def\STROPm{\STROP_m}
\def\tT{\mathcal T}
\def\tG{\mathcal G}
\def\MFC{\operatorname{MFC}}
\def\MFCt{\operatorname{MFC}_{\tng}}
\def\MFCE{{MFCE}}
\def\dispace{\setlength{\itemsep}{2pt}}
\def\eroman{\etype{\roman} \dispace}
\def\ealph{\etype{\alph} \dispace}
\def\pSkip{\vskip 1.5mm \noindent}
\def\mfM{\mathfrak M}
\def\mfS{\mathfrak S}
\def\mfA{\mathfrak A}
\def\mfq{\mathfrak q}
\def\mfa{\mathfrak a}
\def\al{\alpha}
\def\bt{\beta}
\def\gm{\gamma}
\def\tlx{\tilde{x}}
\def\tlE{\widetilde{E}}
\def\tlN{\widetilde{N}}
\def\tlM{\widetilde{M}}
\def\tlU{\widetilde{U}}
\def\tlE{{\widetilde{E}}}
\def\tlF{{\widetilde{F}}}
\def\tltau{\widetilde{\tau}}
\def\tlal{\widetilde{\al}}
\def\tlvrp{\widetilde{\vrp}}
\def\tlpsi{\widetilde{\psi}}
\def\tlrho{\widetilde{\rho}}
\def\tlgm{\widetilde{\gm}}
\def\tl0{\widetilde{0}}
\def\brx{\bar{x}}
\def\bry{\bar{y}}
\def\brz{\bar{z}}
\newtheorem{thm}{Theorem} [section]
\newtheorem*{thm*}{Theorem}
\newtheorem{cor}[thm]{Corollary}
\newtheorem{lem}[thm]{Lemma}
\newtheorem{prop}[thm]{Proposition}
\newtheorem*{defn*} {Definition}
\newtheorem*{claim*} {Claim}
\newtheorem*{theorem6.6'} {Theorem 6.6$'$}
\newtheorem{acknowledgment*}[thm] {Acknowledgment}
\newtheorem{example}[thm]{Example}
\newtheorem*{example*} {Example}
\newtheorem{addendum}[thm]{Addendum}
\newtheorem{examp}[thm]{Example}
 \newtheorem{rem}[thm]{Remark}
 \newtheorem{rems}[thm]{Remarks}
 \newtheorem{remark}[thm]{Remark}
 \newtheorem*{remark*}{Remark}
 \newtheorem{defn}[thm]{Definition}
\newtheorem{construction}[thm]{Construction}
\newtheorem{schol}[thm]{Scholium}
\newtheorem{notation}[thm]{Notation}
\newtheorem{notations}[thm]{Notations}
\newtheorem*{notation*} {Notation}
\newtheorem*{comment*} {Comment}
 \renewcommand{\sectionmark}[1]{}
\newcommand{\diag}{\operatorname{diag}}
\newcommand{\Cov}{\operatorname{Cov}}
\newcommand{\lm}{\lambda}
\newcommand{\Lm}{\Lambda}
 \newcommand{\id}{\operatorname{id}}
\begin{document}

\title[Supertropical Monoids II] {Supertropical Monoids II: \\[1mm] lifts, transmissions, and equalizers}
\author[Z. Izhakian]{Zur Izhakian}
\address{Institute  of Mathematics,
 University of Aberdeen, AB24 3UE,
Aberdeen,  UK.}
    \email{zzur@abdn.ac.uk}
    \author[M. Knebusch]{Manfred Knebusch}
\address{Department of Mathematics,
NWF-I Mathematik, Universit\"at Regensburg 93040 Regensburg,
Germany} \email{manfred.knebusch@mathematik.uni-regensburg.de}


\subjclass[2010]  {Primary: 13A18, 13F30, 16W60, 16Y60; Secondary:
03G10, 06B23, 12K10,   14T05}

\date{\today}


\keywords{Monoids, supertropical algebra, bipotent semirings,
valuation theory,  supervaluations, transmissions, factorization, lifts, equalizers.}


\begin{abstract}
The category $\STROP$ of commutative semirings, whose morphisms are transmissions, is a full and reflective  subcategory of the category $\STROP_m$ of supertropical monoids.
Equivalence relations on supertropical monoids are constructed easily,  and utilized effectively for supertropical semirings, whereas ideals are too special for semirings. Aiming for tangible factorizations, certain  types of such equivalence relations are constructed and classified explicitly in this paper, followed by a profound study of their characteristic properties with special emphasis on difficulties arising from ghost products of tangible elements.
\end{abstract}

\maketitle

\tableofcontents

\baselineskip 14pt

\numberwithin{equation}{section}

\section*{Introduction}

This paper is a further step in the study of supertropical semirings, combining algebraic and categorical viewpoints, as a sequel of \cite{IKR4}. As in \cite{IKR1}--\cite{IKR5}, we aim for a better understanding of the commutative algebra over these semirings, especially with respect to monoid valuations (written \m-valuations) and their refinement by supervaluations \cite[~\S4]{IKR4}. These valuations generalize the classical valuations, whose targets, which are  ordered abelian groups, are   replaced by ordered monoids \cite[\S2]{IKR1}. Classical valuations, with $\bbR$ as a target, are extensively used in tropical geometry and are at the heart of this theory. Approaching other important valuation targets, such as $\bbN$, has been our initial motivation for developing supervaluations, supported by a rich enough algebraic foundation \cite{I1,I2}, \cite{IR1}--\cite{IzhakianRowen2009Resulatants}, \cite{IKR5}.

Any ordered monoid $M$  gives rise to a bipotent semiring $T(M)$ 
  whose multiplication
is the original monoid operation, and whose addition is the maximum in the given
ordering. For example, the max-plus semiring $T(\bbR)$, the underlying structure of tropical geometry, is obtained by taking the real numbers with the standard ordering and summation. Any bipotent semiring extends to a  supertropical semiring $R$, in which $a + a = \nu(a)$ for any ~$a \in R$, where $\nu: R \to \tG(R)$, called the \textbf{ghost map},  is a projection on a distinguished deal $\tG(R)$, called the \textbf{ghost ideal} of $R$. Equivalently, $a + a = e a$, where $e \in \tG(R) $ is a distinguished idempotent element satisfying $e R = e \tG (R)= \tG (R)$.
This addition,
 which  replaces the rule $a+ a = a$ in max-plus algebra, allows an algebraic capturing of  combinatorial properties, where the ghost ideal takes the role of the zero element in classical algebra. The elements of $\tT(R) = R \sm \tG(R)$ are termed \textbf{tangible} elements. With this setup, any bipotent semiring can be realized as a supertropical semiring having no tangible elements.

In supertropical setting tangible elements are the meaningful elements which essentially  frame the central algebraic and geometric features within the theory. Factorization of tangibles is then a natural issue, rather delicate in this setup, especially  since different $x_1, \dots, x_m \in \tT(R) $ may have the same ghost image $ex_1 = \dots  = e x_m$. Therefore, to characterize factorization of tangibles, the tangible preimage of $e x$ under the ghost map $\nu:R \to \tG(R)$ needs a comprehensive study, including a categorical viewpoint, delivered by valuations and transmissions, as well as the generalization of supertropical semirings to supertropical monoids. Before delving into more  explicit structural details, let us first give a motivating example.

 \begin{example*}
   Let $R$ be a supertropical semiring, where $eR = \{e, c, c^2,\dots  \} \ds{\dot\cup} \{ 0 \}$ is  a totaly ordered monoid with  different $c^i$, and $\tT(R)$ is the free abelian monoid on two generators ~$x,y$. Consider the ghost map $\nu: R \to eR$ that sends $x^i y^j$ to $c^{i+j} =e x^i y^j$, so that the tangible preimage of~ $c^{n}$ consists of all $x^i y^j$ with $i+j =n$.  Identifying two elements $z \sim z'$ with $ez = ez'$ by an equivalence relation $\sim$, which respects the semiring operations, any product ~$z x$ is identified with $z' x$ for every~$x$. Consequently,  factorizations of $z x$ coincide with  $z' x$ when  quotienting by the given  equivalence relation $\sim$. Therefore, since factorizations respect quotienting, they can be explored in terms of maps' factorization which agrees with the ghost maps, where
   one needs a special care of particular pathologies arise in this setting.
 \end{example*}

With this  approach to factorization, let us review the general context of supertropical structures and supervaluations, which links the supertropical theory to classical theory.
Supervaluations refine classical valuations \cite{IKR1}-\cite{IKR3} by replacing their target semirings by supertropical semirings, and provide an enriched algebraic framework to tropical geometry.
An \textbf{\m-valuation}  on a semiring $R$ is
a multiplicative monoid homomorphism $v: R \to M $ to a bipotent
semiring $M$ satisfying $ v(x+y) \ds \leq v(x) + v(y)$, cf.  \cite[\S2]{IKR1}. An {\m-valuation}~$v$ is a \textbf{valuation}, if $M\sm \{ 0 \}$ is a cancellative multiplicative monoid.
For rings, these valuations coincide with the valuations
 defined by Bourbaki \cite{B}, and lead to a mapping of algebraic objects, called \textbf{tropicalization}.
Examples of \m-valuations on rings
  which are not valuations were given  in \cite[\S 1]{IKR1}.

To obtain a categorical framework which comprises supervaluations, in the category $\STROP$ of supertropical semirings (Definition \ref{def:ssmr}), \textbf{transmissions}\footnote{Transmissions are called transmissive maps in \cite[\S5]{IKR1}.}, which are more general than semiring homomorphisms,  appear to be the ``right'' morphisms.
A transmission $\al:R \to S$  is a multiplicative map whose restriction $eR \to eS$ to the ghost ideals is a semiring homomorphism \cite[Definition 1.4]{IKR4}. Transmissions are those maps whose composition $\al \circ v$  with any supervaluation $v$ is again a supervaluation \cite[\S5]{IKR1}. Therefore, $\STROP$ includes bipotent semirings as a subcategory, and provides a richer algebraic setting  for their study.

For a more comprehensive view, the  category $\STROP$ is enlarged to the category $\STROPm$ of \textbf{supertropical monoids}, which contains $\STROP$ as a full subcategory.  A~supertropical monoid $(U, \cdot \, )$ is a pointed  monoid with 
 a distinguished idempotent element $e = e_U$, for which the subset $eU$ carries a total ordering compatible with multiplication, such that $eU$ becomes a semiring by defining addition $+: eU \times eU \to eU$  as the maximum
(Definition \ref{defn:1.1}).
%
%
As in the case of supertropical semirings,  this addition extends to the entire ~$U$ by the use of the multiplication~ $\cdot$ and the idempotent element ~$e$, but distributivity on $(U, \cdot \, , +)$ may fail. When it does not fail, $U$ is a supertropical semiring. A morphism $\al: U \to V$ in $\STROPm$ is a transmission as defined for $\STROP$, which restricts to a homomorphism $eU \to eV$
of bipotent semirings, and obeys the rules TM1-TM5 of transmissions in \cite[Theorem~5.4]{IKR1}.

Since a bipotent semiring $M$ can be viewed as a supertropical
semiring having only ghost elements, where $e =
1_M,$  the category $\STROP_m /M$ of supertropical monoids over~
$M$ may be viewed as the category of supertropical monoids $U$
having a fixed ghost ideal $eU = M$, whose morphisms
 are  transmissions $\al: U \to V$ with $\al(x) = x$ for all
$x \in M.$ The surjective transmissions over $M$ are called
\textbf{fiber contractions}, as in the case of
supertropical semirings \cite[\S6]{IKR1}. If $\al:U \onto V$ is a fiber contraction and $U$ is a
supertropical semiring, then $V$ is also  a supertropical semiring
 \cite[Theorem 1.6]{IKR4}, and~$\al$ is a semiring homomorphism
\cite[Proposition 5.10.iii]{IKR1}.

 For every supertropical monoid $U$ there exists a fiber
contraction  $\sig_U: U \to \htU$,  where~ $\htU$ is the \textbf{supertropical semiring
associated} to $U$, such that every fiber contraction $\al:~U \onto
V$ factors (uniquely) through $\sig_U$, i.e., $\al = \bt \circ \sig_U$, where
 $\bt:\htU \to V$ is  a fiber contraction. Namely, $\STROP /M$ is a full reflective subcategory of $\STROP_m
/M$~\cite[~p9]{F}, \cite[1.813]{FS}.

Universal problems appearing in
$\STROP/ M$ are generalized to $\STROPm /M$ in an obvious way and can be solved, often more easily.
Such solutions are delivered to $\STROP / M $ by \textbf{reflections} $\sig_U: U \to \htU$.
%
%
%
%
This approach pertains in particular to \m-valuations  $v: R \to M$ on a ring $R$, and to  supervaluations $\vrp:R \to S$ with $S$ a supertropical semiring over $e S = M$, as defined in \cite[\S4]{IKR1}.  (They are also defined for $R$ a semiring.)
Such a supervaluation applies to the coefficients of a (Laurent) polynomial $f(\lm) \in R[\lm]$ in a set $\lm$ of $n$
variables, and gives a polynomial $f^\vrp(\lm)$ over $S$.  This view  helps to analyze   supertropical root sets and tangible
components of polynomials $g(\lm) \in F[\lm]$, 
obtained from $f^\vrp(\lm) \in S[\lm]$ by
passing from~ $S$ to various supertropical semifields $F$ \cite[\S5 and~ \S7]{IR1}. To  this end,
 one needs a good control on the set
$ \{ a \in R \ds |  \vrp(a) \in M \} \ds  =  \{ a \in R \ds |  \vrp(a) = v(a) \},$
which plays a central role in \S\ref{sec:1} and $\S\ref{sec:2}$ below.

Given an \m-supervaluation $\vrp: R \to U$ covering an \m-valuation  $v: R \to M$, in \S\ref{sec:1}
we construct a tangible $\text{\m-supervaluation}$ $\tlvrp: R \to
\tlU$ which is minimal such that  $\tlvrp \geq \vrp$ (Theorem
\ref{thm:1.10}).
 In \S\ref{sec:2} we then classify
the \m-supervaluations $\psi$ satisfying  $\vrp \leq \psi \leq \tlvrp,$
called the \textbf{partial tangible lifts} of $\vrp.$ They  are
uniquely determined by their \textbf{ghost value sets}
$$ G(\psi) := \psi (R) \cap M,$$
cf.~Theorem \ref{thm:2.4}. These sets are ideals of the semiring $M,$
and all ideals $\mfa \subset G(\vrp)$ occur in this way (Theorem
\ref{thm:2.7}). Unfortunately, the ghost value set $G(\psi)$  does
not control the set $\{ a \in R \ds | \psi(a) \in M\} $
completely.  We  only able to state that this set  is contained in the preimage
$\iv(G(\psi))$.
If $\vrp$ is a supervaluation,  then $\chvrp := (\tlvrp)^\wedge$
is the supervaluation, which is a partial tangible lift of $\vrp$
having smallest ghost value set.

In \S\ref{sec:3} we undertake a fine analysis of surjective transmissions $\al: U' \twoheadrightarrow U$ for supertropical monoids $U', U$. Such transmissions can be approached via TE-relations, i.e., equivalence relations on $U'$, compatible with its monoid structure in the appropriate sense \cite[~Definition~ 1.7]{IKR4}.
Namely, given a transmission  $\al: U' \twoheadrightarrow  U$, it induces the equivalence relation  $E = E(\al)$ on $U'$ which identifies all elements $x' \in U'$ having the same image under $\al$.  Then $\al$ factorizes as $\al = \rho \circ \pi_E$, where~
$\rho:U'/E \isoto U$ is an isomorphism  and
 $\pi_E: U' \onto  U'/E$ is the canonical surjection  sending each $x' \in U'$ to its equivalent class~ $[x']_E$.

Dividing out the zero kernel of a transmission $\al: U'  \to U$ \cite[\S1]{IKR4}, without loss of generality we assume that
$\al^{-1}(0) = \{ 0' \}$. A transmission $\al$ is called \textbf{tangible} (Definition~ \ref{defn:1.1}), if $\al(\tT(U')) \subset \tT(U) \cup \00$. It is called \textbf{mixing}, if for any different elements $x'_1, x'_2 \in U'$ with $\al(x'_1) = \al(x_2')$ there exists a mixing element $z' \in U'$, might interchanging $x_1'$ and $x_2'$, such that $x_1' z \in \tT(U')$ and $x_2'z \in e U'$. We prove  that every surjective transmission $\al : U' \onto U$ with trivial kernel factorizes as
\begin{equation}\label{eq:0.1}
  \al = \al_m \circ \al_t,
\end{equation}
where $\al_t: U' \onto V$ is a tangible transmission, and $\al_m : V \onto U$ is a mixing transmission which covers the identity of $eU$, and thus is a multiplicative fiber contraction of $V$ over $eV = eU$.

 Choosing as small as possible $V$ in the appropriate sense, the factorization \eqref{eq:0.1} is unique (Theorem \ref{thm:3.10}.(ii)), called the \textbf{$(t,m)$-factorization} of $\al$. The factor $\al_m$ can be replaced by the surjection  $\pi_E: V \onto V/E$ of a suitable MFCE-relation  (=multiplicative fiber conserving equivalence) $E$  on $eV = eU$. This indicates that finding $(t,m)$-factorizations is essentially a matter of MFCE-relations, and so  can be  approached explicitly by the methods developed in \cite{IKR1}--\cite{IKR4}.
In \S\ref{sec:4} we determine the $(t,m)$-factorizations of the  composite $\bt \circ \al$ of two surjective transmissions $\al$ and $\bt$ having trivial zero kernel.

The idea beyond $\S\ref{sec:3}$ and $\S\ref{sec:4}$ is to study the ``fate'' of a given (tangible)  family $\{x_1, \dots, x_r \}$ in a  supertropical monoid $U$,  satisfying $ex_1= \cdots = ex_r$, due to multiplying by some element $u \in U$. This means to decide whether $x_i u \in \tT(U)$ or $x_i u \in e U$, i.e., the products becomes a ghost or zero.  The latter  case  with $u \in \tT(U)$ is of  more interest. We follow this idea in ~\S\ref{sec:5}--\S\ref{sec:7},  relying on hierarchy of elements which is phrased by a patriarchal  terminology.

An element $z \in \tT(U)$ is called a \textbf{son of $x \in \tT(U)$ over $c = ex$}, if $ez = c$ and there exists $u \in \tT(U)$ such that $z = xu$. Taking a patriarchal  viewpoint,  $z$ is regarded as the outcome~ $xu$ of pairing $x$ to some $u \in \tT(U)$ such that $exu = c$, without specifying explicitly the ``mother'' element $u$ of $z$. In other words, $x$ and $z$ have the same ghost image $c$, where $z = xu$ for some unspecified tangible element $u$.
An element $x$ is called a \textbf{tyrant} (or said to be tyrannic), if it has at most one son over $c \in eU$.
It is said to be \textbf{isolated}, if it has no son~ $z \neq x $ with $ex = ez$. The latter property is weaker than tyrannic, but to our opinion deserves similar attention.

For any tyrant $x$ in a supertropical monoid $U$ the set $\mfa = eU \cup xU$ is obviously an ideal of $U$ containing $M =e U$. This ideal is small in the sense, that each fiber of the restricted ghost  map $\nu_U | \mfa$ has at most two elements, $\mfa_c = \{c,z \}$ if $x$ has a son $z$ over $c$, otherwise $\mfa_c = \{c\}$. Conversely, given an ideal $\mfa \supset M$ with $|\mfa_c| \leq 2$ for every $c \in M$, then $\nu_U: U \to M$ has a section $s: M \to U$ for which $\mfa = M \cup s(M)$. As explicated in \S\ref{sec:5}, it follows that the MFCE-relations, for which every class contains at most one tangible element, are the compressions of these ideals $M \cup s(M)$, cf. \cite[~Definition~ 2.5]{IKR4}.

The \textbf{equalizer} $\Eq(S)$ of a subset $S$ of $U_a = \{ x \in U \ds | ex = a\} $ is defined as the finest equivalence relation $E$ on $U$ with $s \sim_E t$ for all $s,t \in S$. More generally, given an arbitrary subset $S$ of $U$, the \textbf{fiberwise equalizer} $\Feq(S)$ of $U$ is the finest equivalence relation $E$ on $U$ for which $s \sim_E t$ for any $s,t \in S$ with  $es = et$. $\Eq(S)$ and $\Feq(S)$  are obtained in \S\ref{sec:6} by explicit construction, using the so-called ``$S$-pathes'' in $U$. These pathes serve as our main technical tool in \S\ref{sec:7}.

The \emph{universal MFCE-relation $\T(x)$ turning a given $x \in \tT(U)$ into a tyrant} is thoroughly  studied  in ~\S\ref{sec:7}. This means that the relation $\T(x)$ is the finest MFCE-relation $E$ on $U$ for which~ $[x]_E$ is tyrannic on ~$U/E$, provided that such relation $E$ exists on $U$, and otherwise $[x]_{\T(x)}$ is ghost in $U/\T(x)$. This relation $\T(x)$ is the fiberwise equalizer $\Feq(\SS(x))$ of the set $\SS(x) = (Ux) \cap \tT(U)$ of sons of $x$.
Using the results of \S\ref{sec:6}, we  gain an explicit criterion when $[x]_{\T(x)}$ is a tyrant and when is not (Theorem \ref{thm:7.6}).

In an analogous way we obtain the universal MFCE-relation $\Is(x)$ which isolates $x$ in its fiber $U_{ex}$, as well as the universal MFCE-relation $\Sis(x)$ which isolates every son~ $z$ of $x$ in its fiber~ $U_{ez}$, together with criteria for the existence of such MFCE-relations (Theorems~ \ref{thm:7.9} and~ \ref{thm:7.16}). It turns out, perhaps astonishing at first glance, that under a mild cancellation hypothesis on $eU$ these MFCE-relations coincide (Theorem \ref{thm:7.18}) and  $\Is(x) =\Sis(x)$.

The  results in \S\ref{sec:3}--\S\ref{sec:7} may be viewed as instances of a supertropical divisibility theory, performed upon  TE-relations instead of ideals, in particular prime ideals, in classical commutative algebra. In the same spirit, prime and radical ideals can be replaced by special equivalence relations to develop a systematic theory of  commutative $\nu$-algebra~ \cite{I2}, which generalizes supertropical algebra.

\section{Lifting ghosts to tangibles}\label{sec:1}
We recall our underlying structures.

\begin{defn}\label{def:ssmr}
A \textbf{supertropical semiring} is  a semiring $R$ where $e:=1+1$ is  idempotent
(i.e., $e+e = e$) such that,   for
all~ $a,b\in R$,  $a+b\in\{a,b\}$ whenever $ea \ne eb$ and  $a+b=ea$ otherwise. This implies
 $ e a = 0 \ds \Rightarrow a = 0.$
The  target of the \textbf{ghost map}
$\nu: a\mapsto ea$ is the \textbf{ghost ideal} $eR$ of $R$, which is a bipotent semiring (with unit element ~ $e$),  i.e., $a+b$
is either $a$ or $b$, for any $a,b\in eR$.
The semiring $eR$ is totally ordered  by the rule
\begin{equation}\label{eq:1.1}
 a\le b \dss\Leftrightarrow a+b=b.
\end{equation}
$\tT(R):=R\sm(eR)$ is the set of \textbf{tangible} elements of $R$, and $\tG(R) := eR \sm \{ 0 \}$ is the set of \textbf{nonzero ghost elements}.
 The
zero $0 = e0$ is regarded mainly as a ghost.
\end{defn}
More generally, we have the following structure.
\begin{defn}\label{defn:1.1}
 A \textbf{supertropical monoid} $U$ is
an abelian monoid $(U, \cdot \, )$   with  an
absorbing element $0 := 0_U$, i.e., $0 \cdot x = 0$ for every $x
\in U$, and a distinguished idempotent $e := e_U$ such
that
$$ \forall x \in U: \quad ex =0 \ \Rightarrow \ x = 0.$$
In addition, the submonoid $M := eU$ of $U$ is equipped  with a total ordering, compatible with multiplication \cite[Definition 1.1]{IKR4}, which is again determined by rule \eqref{eq:1.1}. The map $\nu_U: U \to M$, $x \mapsto ex$, is a monoid homomorphism, called the \textbf{ghost map} of $U$. Tangible elements $\tT := \tT(U)$ and ghost elements $\tG := \tG(U)$ are defined exactly as in Definition~ \ref{def:ssmr}.

A supertropical monoid $U$ is called \textbf{unfolded}, if the set
$\tT(U)_0 := \tT(U) \cup ~\{0 \}$ is closed under multiplication.
\end{defn}

If $U$ is unfolded, then $N:= \tT(U)_0$ is a multiplicative monoid with absorbing  element~$0$. Furthermore,  $M := eU$ is a
totally ordered monoid with absorbing  element $0$, and the
restriction
$$ \rho := \nu_U |N : N \ds \To M$$
is a monoid homomorphism with $\irho (0) = \{ 0 \}$. Observing
 that $e_U = 1_M = \rho(1_N)$, we see that the supertropical
monoid $U$ is  completely determined by the triple $(N,M, \rho)$.
This paves a way to constructing  all unfolded supertropical
monoids up to isomorphism.
\begin{construction} \label{constr:1.2} Given a
totally ordered monoid $M$ with absorbing element $0_M \leq x$ for
all $x \in M$, i.e., a bipotent semiring $M$, let $N$ be an (always
commutative) monoid with  absorbing element  $0_N$, together with a
multiplicative map $ \rho : N \to M$ with $\rho (1_N) =
1_M$, $\irho(0_M) = \{ 0 _N\} $. We define an unfolded
supertropical monoid $U$ to be the disjoint
union of $M \sm \{ 0_M\}$, $N \sm \{ 0_N\}$ and  a new element
$0$, identified as $0 = 0_M = 0_N $. We  write
$$U = M \cup N, \quad \text{where } M \cap N = \{ 0 \}.$$

The multiplication on $U$ is given by the rules, in obvious
notation,
$$  x \cdot y = \left\{
\begin{array}{lll}
  x \Ndot y & \text{if} &  x,y \in N, \\[1mm]
  \rho(x) \Mdot y & \text{if} &  x \in N, y \in M, \\[1mm]
  x \Mdot \rho(y) & \text{if} &  x \in M, y \in N, \\[1mm]
  x \Mdot y & \text{if} &  x,y \in M.  \\
\end{array}
\right.
$$
It is easy to verify that $(U, \cdot \,)$ is a (commutative)
monoid with $1_U = 1_N$ and absorbing element $0$. Let $e := 1_M$, then $eU = M$ and $\rho(x) = ex$ for $x \in M $. Furthermore,    $ex =
0$ iff $x=0$ for any $x \in U$, since $\irho(0) = \{ 0\}$. Thus
$(U, \cdot \, , e)$, together with the given ordering on $M = eU$,
is a supertropical monoid,  clearly unfolded. We denote the
supertropical monoid $U$ by $\STR(N,M, \rho)$.\footnote{The notation $\STR(N,M, \rho)$ differs slightly from the
notation $\STR(\tT, \tG, v)$ in \cite[Construction ~3.16]{IKR1}, causes no confusion
regarding the ambient context.}
\end{construction}

This construction generalizes the construction of supertropical
domains \cite{IKR1} (loc. cit. Construction 3.16). There,
to obtain all supertropical predomains up to
isomorphism, it was assumed that $N \sm \{ 0 \}$ and $M \sm \{ 0 \}$ are closed under
multiplication, and that the monoid $M \sm\{ 0 \}$ is
cancellative. Omitting only the cancellation hypothesis would
give us a class of supertropical monoids not broad enough for our
work below.

\pSkip

We add a description of the transmissions  between two unfolded
supertropical monoids.

\begin{prop}\label{prop:1.3} Assume that $U' = \STR(N', M', \rho')$
and $U = \STR(N, M, \rho)$ are unfolded supertropical monoids.
\begin{enumerate} \eroman
    \item If $\lm: N' \to N$ is a monoid homomorphism with $\lm(0) =
    0$,  $\mu: M' \to M$ is a semiring homomorphism, and  $\rho' \lm = \mu
    \rho$, then the well-defined map
    $$ \STR(\lm,\mu) : U' = N' \cup M' \dss \To  U = N \cup M,$$
    sending  $x' \in N'$ to $\lm(x')$ and $y' \in M'$ to
    $\mu(y')$,
    is a tangible transmission \cite[~Definition~2.3]{IKR4}.

    \item In this way we obtain all tangible transmissions from
    $U'$  to $U$.
\end{enumerate}
\end{prop}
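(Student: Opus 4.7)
The plan is to handle the two statements in turn, exploiting the fact that $U' = N' \cup M'$ is glued along $\{0\}$, so every map out of $U'$ is determined by its restrictions to $N'$ and $M'$.

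For (i), I would first check that the formula given for $\al := \STR(\lm,\mu)$ is consistent on the overlap $N' \cap M' = \{0\}$, which follows from $\lm(0) = \mu(0) = 0$. The bulk of the verification is that $\al$ is multiplicative, and this is a four-case computation following the defining cases of the product in Construction~\ref{constr:1.2}. The only nontrivial cases are the mixed ones, e.g.\ for $x' \in N'$, $y' \in M'$, one has $\al(x' \cdot y') = \al(\rho'(x') \Mdot' y') = \mu(\rho'(x') \Mdot' y') = \mu(\rho'(x')) \Mdot \mu(y')$, which by the compatibility $\mu \rho' = \rho \lm$ equals $\rho(\lm(x')) \Mdot \mu(y') = \lm(x') \cdot \mu(y') = \al(x') \cdot \al(y')$; the other mixed case is symmetric. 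Unit and zero are preserved since $\lm(1_{N'}) = 1_N$ and $\mu(0) = 0$. By construction $\al$ restricts on $M' = eU'$ to the semiring homomorphism $\mu:M' \to M = eU$, and since $\al(N') = \lm(N') \subset N = \tT(U) \cup \{0\}$, the transmission is tangible in the sense of Definition~\ref{defn:1.1}. The axioms TM1--TM5 of \cite[Theorem~5.4]{IKR1} are then immediate: multiplicativity is TM1--TM2, preservation of~$0$ and~$e$ come from $\lm(0)=\mu(0)=0$ and $\mu(e_{U'}) = e_U$, and the ghost-compatibility $\al(ex') = e\al(x')$ follows once more from $\mu \rho' = \rho \lm$.

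For (ii), given any tangible transmission $\al: U' \to U$, define $\lm := \al|_{N'}$ and $\mu := \al|_{M'}$. Tangibility says exactly $\lm(N') \subset N$, and $\mu$ is a semiring homomorphism $M' \to M$ by the definition of transmission. Multiplicativity of $\lm$ is inherited from $\al$ since $N'$ is closed under multiplication (as $U'$ is unfolded), and $\lm(0) = \al(0) = 0$. The compatibility $\rho \lm = \mu \rho'$ is the assertion that, for $x' \in N'$, $e\al(x') = \al(ex')$, which is one of the defining conditions of a transmission (ghost-map commutativity). Finally, $\al$ agrees with $\STR(\lm,\mu)$ on $N'$ and on $M'$, hence on all of $U' = N' \cup M'$.

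The only genuinely delicate point is the mixed-case multiplicativity in (i), where one must remember that the product $x' \cdot y'$ of a tangible with a ghost is computed in $U$ through $\rho$, and one needs the square $\rho \lm = \mu \rho'$ to translate this back and forth; everything else is a bookkeeping exercise once the two pieces $\lm$ and $\mu$ have been correctly identified.
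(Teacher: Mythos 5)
Your proof is correct and takes essentially the same route as the paper, which dismisses part (i) as ``a straightforward check'' and part (ii) as ``obvious''; your write-up simply supplies those details, with the mixed-case multiplicativity and the square $\rho\lm = \mu\rho'$ (the statement's ``$\rho'\lm = \mu\rho$'' is a typo, which you correctly read in the only way that typechecks) being exactly the points that need checking. Nothing further is needed.
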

\begin{proof} (i): A straightforward check.
\pSkip
(ii): Obvious.
\end{proof}

Given an \m-valuation $v: R \to M$
with support $\iv(0) = \mfq$, the supertropical semiring~ $U^0(v)$
appearing  in \cite[Theorem 7.4]{IKR4} may be viewed as an instance
of Construction ~\ref{constr:1.2}, as follows.

\begin{example}\label{exm:1.4} Let $E$ denote the equivalence
relation on $R$ having the equivalence classes $[0]_E = \mfq$ and $[x]_E
= \{ x \} $ for $x \in R\sm \mfq$. $E$ is multiplicative, and  hence
 $R / E$ is  a monoid with absorbing element $[0]_E = 0$,
 identified  with the subset $(R \sm \mfq) \cup \{ 0 \}$ in
the obvious~way.  The map $v: R\to M$ induces a monoid
homomorphism $\brv: R/E \to M$ given by $\brv(x) = v(x)$ for $x
\in R \sm \mfq$, $\brv(0) = 0 $. Thus  $\brv^{-1} (0) = \{ 0 \}$
and
$$U^0(v) = \STR(R / E, M , \brv).$$
\end{example}

We next aim for ``unfolding'' methods of an arbitrary supertropical
monoid $U$. By this we roughly mean a fiber contraction $\tau:
\tlU \to U$, where $\tlU$ is an unfolded supertropical monoid and the
fibers $\itau(x)$, $x\in U$, are as small as possible. More precisely
we decree

\begin{defn}\label{defn:1.4}
Let $M := eU$, and let $N$ be a submonoid of $(U, \cdot \;)$ which
contains the set $\tT(U)_0$. An \textbf{unfolding of $U$ along
$N$} is a fiber contraction $\tau: \tlU \to U$ over $M$ (in
particular $e \tlU = M$), such that
$$  \itau(x)  = \left\{
\begin{array}{lll}
  \{x, \tlx \} & \text{if} &  x \in M \cap N, \\[1mm]
  \{ \tlx \} & \text{if} &  x \in N \sm M, \\[1mm]
    \{x \} & \text{if} &  x \in M \sm N, \\
\end{array}
\right.
$$
where $\tlx \in \tT(U)_0$. For any $x \in N$,  we call $\tlx$ the
\textbf{tangible lift} of $x$ (with respect to $N$).
\end{defn}

Notice that this forces $\tau(\tT(\tlU)_0)= N$, and  moreover
for any $x \in N$ the tangible fiber~$\tlx$ is the unique element
of $\tT(\tlU)_0$ with $\tau(\tlx) = x$, hence $\tT(\tlU)_0 = \{
\tlx \ds | x\in N \}$.
Thus, if $\tau:\tlU \to U$ is an unfolding along  $N$, then the restriction  $\tT(\tlU)_0 \to N$, $\tlx \mapsto x$, of $\tau$   is a monoid isomorphism, and $\tau$ itself
is an ideal compression  with ghost kernel
 ${(M \cap N)}^{\sim} \cup M$ (cf. \cite[Definition 1.5]{IKR4}), where ${(M
\cap N) }^{\sim}:= \{ \tlx \ds | x \in M \cap N \}$.


\begin{thm}\label{thm:1.5} $ $
\begin{enumerate} \eroman
    \item Given a pair $(U,N)$ consisting of a supertropical
    monoid $U$ and a multiplicative submonoid $N \supset
    \tT(U)_0$, there exists an unfolding $\tau: \tlU \to U$ of $U$
    along $N$.

    \item  If $\tau': \tlU' \to U$ is a second unfolding of $U$
    along $N$, then there exists a unique isomorphism of
    supertropical monoids $\al : \tlU \ds \iso \tlU'$ with $\tau' \circ \al =
    \tau$.
\end{enumerate}

\end{thm}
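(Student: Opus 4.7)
For (i), the plan is to realize $\tlU$ directly via Construction~\ref{constr:1.2}. I take $N' := N$, regarded as an abstract monoid with absorbing element $0$, and set $\rho' := \nu_U|_N : N \To M$. This $\rho'$ is multiplicative with $\rho'(1_N) = e = 1_M$ and $(\rho')^{-1}(0) = \{0\}$, the last condition holding because $ex = 0 \Rightarrow x = 0$ in any supertropical monoid. Form $\tlU := \STR(N, M, \rho')$; by construction $\tlU$ is an unfolded supertropical monoid with $\tT(\tlU)_0 = N$ and $e\tlU = M$. Define $\tau : \tlU \To U$ by $\tau(\tlx) := x$ for the tangible lift $\tlx \in N'$, and $\tau(y) := y$ for $y \in M$.

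Multiplicativity of $\tau$ is verified case by case against the three product rules of Construction~\ref{constr:1.2}: on $N \times N$, $\tlx \Ndot \tly$ maps to $xy$, agreeing with the product in $U$; on $M \times M$ the map is the identity; in the mixed case, $\tau(\tlx \Mdot y) = \tau(\rho'(x) \Mdot y) = (ex)y$, which equals $\tau(\tlx)\tau(y) = xy$ in $U$ because $y \in eU$ forces $xy = (ex)y$. Since $\tau|_M = \id_M$, the map is a fiber contraction over $M$. From $N \supset \tT(U)_0$ one has $U = M \cup N$, and the fiber of $\tau$ over each $x \in U$ has exactly the shape required by Definition~\ref{defn:1.4}: $\{x, \tlx\}$ if $x \in M \cap N$; the singleton $\{\tlx\}$ if $x \in N \sm M$ (since $x \notin M$); and the singleton $\{x\}$ if $x \in M \sm N$.

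For (ii), let $\tau' : \tlU' \to U$ be a second unfolding of $U$ along $N$. Definition~\ref{defn:1.4} forces the restriction $\tau'|_{\tT(\tlU')_0}$ to be a monoid isomorphism onto $N$ (it bijects each tangible lift $\tlx'$ to $x \in N$, and is multiplicative because $\tau'$ is); the analogue holds for $\tau|_{\tT(\tlU)_0}$. Composing them yields a monoid isomorphism $\lm : \tT(\tlU)_0 \iso \tT(\tlU')_0$, $\tlx \mapsto \tlx'$, satisfying $\tau' \circ \lm = \tau|_{\tT(\tlU)_0}$. Because both $\tau$ and $\tau'$ restrict to the identity on $M$, their ghost-map compatibility gives $\nu_{\tlU'}(\lm(\tlx)) = ex = \nu_{\tlU}(\tlx)$, which is precisely the hypothesis $\rho' \lm = \id_M \circ \rho$ of Proposition~\ref{prop:1.3}. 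Applying that proposition produces a tangible transmission $\al := \STR(\lm, \id_M) : \tlU \To \tlU'$; it is bijective because both $\lm$ and $\id_M$ are, hence an isomorphism of supertropical monoids, and $\tau' \al = \tau$ by construction. Uniqueness is immediate: any such $\al$ sends ghosts to ghosts, so $\al|_M$ lands in $(\tau')^{-1}(y) \cap M = \{y\}$, forcing $\al|_M = \id_M$; likewise, each tangible $\tlx$ must map to the unique tangible element of $(\tau')^{-1}(x)$, namely $\tlx' = \lm(\tlx)$.

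The main technical point lies in ensuring that the mixed tangible--ghost multiplication rule of Construction~\ref{constr:1.2} is traced correctly through $\tau$; both the multiplicativity check and the fiber description ultimately reduce to the single identity $xy = (ex)y$ for $x \in N$, $y \in M$ in $U$, together with the observation that identifying $N' = N$ as an abstract monoid on the nose is harmless because $\rho'$ is just $\nu_U|_N$.
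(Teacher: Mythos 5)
Your proof is correct and takes essentially the same route as the paper: existence by applying Construction~\ref{constr:1.2} to $(N,M,\nu_U|_N)$ and checking that the evident map $\tau$ is a fiber contraction with the prescribed fibers, and uniqueness by matching the tangible parts of the two unfoldings and invoking Proposition~\ref{prop:1.3} with $\mu=\id_M$. The differences are only cosmetic: the paper works with a formal copy $\tlN$ of $N$ and compresses into a ``without loss of generality'' the identification of the second unfolding with a standard $\STR$-presentation, which is exactly the step you carry out when you treat $\tau'|_{\tT(\tlU')_0}$ as a monoid isomorphism onto $N$.
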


\begin{proof} (i) \emph{Existence:} Since $M$ is an ideal of $U$,
the set $M \cap N$ is a monoid ideal of $N$. We have $U = N \cup
M$, since  $N \supset
    \tT(U)_0$.  Let $\rho: N \to M$ denote the restriction of
    $\nu_U$ to ~$N$. It is a monoid homomorphism with $\irho(0) = \{ 0
    \}$.

    Let $\tlN$ denote a copy of the monoid $N$ with copying
    isomorphism $x \mapsto \tlx$ ($x \in N$), and let $\tlrho: \tlN \to
    M$ denote the monoid homomorphism from $\tlN$ to $M$
    corresponding to $\rho: N \to M$. Thus $\tlrho (\tlx) = \rho(x) =
    ex$ for $x\in N$. Define the unfolded supertropical monoid
    $$ \tlU := \STR(\tlN, M, \tlrho) = \tlN \cup M.$$
In $\tlU$ we have $\tl0_U = 0 $ and $\tlN \cap M = \{ 0 \}$.
Further $\tT(\tlU)_0 = \tlN$ and $e \tlU = eU = M$.

We obtain a well-defined surjective map $\tau: \tlU \to U$ by
setting  $\tau(\tlx) := x$ for $x\in N$, $\tau(y) := y$ for $y\in
M$. As easily checked, $\tau$ is multiplicative, sending
$0$ to $0$, $1 \in \tT(\tlU)$ to $1 \in N$, which restricts to the
identity on $M$. Thus $\tau$ is a fiber contraction
\cite[Definition~2.1]{IKR4}. The fibers of $\tau $ are as
indicated in Definition~\ref{defn:1.4}; hence $\tau$ is an
unfolding of $U$ along $N$. \pSkip
(ii) \emph{Uniqueness}: Let $\tltau: \tlU \to U$ and $\tltau':
\tlU' \to U$ be unfoldings of  $U $ along $N$ with tangible lifts
$x \mapsto \tlx$ and $x \mapsto \tlx'$ respectively. Without loss
of generality we assume that $\tlU  = \STR(\tlN,M, \tlrho)$ and
$\tlU' = \STR(\tlN',M, \tlrho')$ with tangible lifts $x \mapsto
\tlx$ and $x \mapsto \tlx'$ ($x \in N$). Then $\tlrho(\tlx) =
\tlrho'(\tlx') = ex$ for every $x \in N$.  The map $\lm: \tlN \to
\tlN'$, given by $\lm (\tlx) = \tlx'$ for $x \in N$, is a monoid
isomorphism with $\tlrho' \circ \lm = \id_M \circ \tlrho$. Thus we
have  a well defined transmission (cf. Proposition
\ref{prop:1.3})
$$ \al := \STR(\lm, \id_M): \tlU \ds \To \tlU'.$$   $\al$ is an isomorphism over $U$, i.e., an
isomorphism with $\tau' \circ \al = \tau$, clearly the only one.
\end{proof}

\begin{notation}\label{notation:1.6}
We call the map $\tau: \tlU \to U$ constructed in part (i) of the
proof of Theorem~\ref{thm:1.5} \textbf{``the'' unfolding of $U$
along $N$} and, when necessary, write it  more  precisely as
$$ \tau_{U,N}: \tlU(N) \To U.$$   Sometimes we
abusively denote any unfolding of $U$ along $N$ in this way,
justified by Theorem \ref{thm:1.5}.(ii).
\end{notation}

\begin{example}\label{exam:1.7} In  the special case
that $U = eU =M$, $N$ can be any submonoid of $M$ containing
$0$. Then  $\tlM(N) = \tlN \cup M$ with $\tlN \cap M = \{0\}$, and
$$\tlM(N) \cong \STR(N,M, i)$$ with $i : N \into M$ the inclusion
mapping. For every $x \in N$ there exists a unique tangible
element $\tlx$ in $\tlM(N)$ with $e \tlx =x$, while for $x \in M
\sm N$ there exists no such element.
\end{example}

\begin{thm}\label{thm:1.8} Assume that $\al : U' \to U$ is a
transmission of  supertropical monoids \cite[Definition 1.4]{IKR4}, and that $N' \supset
\tT(U')_0$, $N \supset \tT(U)_0$ are submonoids of $U'$ and $U$
with $\al(N') \subset N$. Then there exists  a unique tangible
transmission
$$ \tlal:= \tlal_{N',N} : \tlU' (N') \ds \To \tlU(N),$$
called the \textbf{tangible unfolding of $\al$ along $N'$ and
$N$}, such that the diagram
$$
\xymatrix{   \tlU'(N')   \ar @{>}[d]^{\tau_{U',N'}}  \ar
@{>}[rr]^{\tlal } &   & \tlU(N)
\ar @{>}[d]^{\tau_{U,N}}   \\
U'  \ar @{>}[rr]_{\al}  &  & U  }
$$
commutes.
\end{thm}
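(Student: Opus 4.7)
The strategy is to reduce to Proposition~\ref{prop:1.3} by exhibiting $\tlal$ in the form $\STR(\lm, \mu)$ for suitable monoid homomorphisms $\lm : \tlN' \to \tlN$ and $\mu : M' \to M$ extracted from $\al$. Concretely, write $\tlU'(N') = \STR(\tlN', M', \tlrho')$ and $\tlU(N) = \STR(\tlN, M, \tlrho)$ as built in the proof of Theorem~\ref{thm:1.5}. Set $\mu := \al|_{M'} : M' \to M$; this is a semiring homomorphism by the very definition of a transmission. The restriction of $\al$ to $N'$ takes values in $N$ by hypothesis and is multiplicative with $\al(0)=0$, $\al(1)=1$, so via the copying isomorphisms $x \mapsto \tlx$ it induces a monoid homomorphism $\lm : \tlN' \to \tlN$, $\lm(\tlx') := \widetilde{\al(x')}$.

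Next I would verify the compatibility condition $\tlrho \circ \lm = \mu \circ \tlrho'$ required by Proposition~\ref{prop:1.3}. For $x' \in N'$ this reads $e_U \cdot \al(x') = \al(e_{U'} x')$, which follows at once from multiplicativity of $\al$ together with $\al(e_{U'}) = e_U$ (a consequence of $\al$ restricting to a homomorphism on ghost ideals). Proposition~\ref{prop:1.3} then produces the tangible transmission $\tlal := \STR(\lm, \mu) : \tlU'(N') \to \tlU(N)$.

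Commutativity of the square is a case check along the decomposition $\tlU'(N') = \tlN' \cup M'$. On $y' \in M'$: both paths give $\mu(y')$, since $\tau_{U',N'}$ and $\tau_{U,N}$ fix $M'$ and $M$ pointwise and $\tlal|_{M'} = \mu = \al|_{M'}$. On a tangible lift $\tlx'$ with $x' \in N'$: the lower path yields $\al(x')$ while the upper path gives $\tau_{U,N}(\lm(\tlx')) = \tau_{U,N}(\widetilde{\al(x')}) = \al(x')$.

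For uniqueness, let $\beta : \tlU'(N') \to \tlU(N)$ be any tangible transmission making the square commute. On $M'$, $\beta$ sends ghost to ghost, so $\beta(y') \in M$ and hence $\beta(y') = \tau_{U,N}(\beta(y')) = \al(\tau_{U',N'}(y')) = \al(y') = \mu(y')$. On $\tlx' \in \tT(\tlU'(N'))$, tangibility of $\beta$ forces $\beta(\tlx') \in \tT(\tlU(N)) \cup \{0\}$, while commutativity forces $\tau_{U,N}(\beta(\tlx')) = \al(x')$; inspecting the fibers of $\tau_{U,N}$ in Definition~\ref{defn:1.4} (which contain at most one tangible/zero element, namely the tangible lift of their image), we conclude $\beta(\tlx') = \widetilde{\al(x')} = \lm(\tlx')$. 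Thus $\beta = \tlal$. The only mildly delicate point is this last step, since one must use precisely that the unfolding collapses each fiber to at most one tangible element together with the tangibility of $\beta$; otherwise all verifications are formal. \qed
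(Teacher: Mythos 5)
Your proposal is correct and follows essentially the same route as the paper: both construct $\tlal$ as $\STR(\lm,\mu)$ via Proposition~\ref{prop:1.3}, with $\lm$ and $\mu$ induced by restricting $\al$ to the tangible copies and to the ghost ideals, check the compatibility $\tlrho\circ\lm=\mu\circ\tlrho'$ from $\al(e_{U'}x')=e_U\al(x')$, and obtain uniqueness because a tangible transmission must send $\tlN'$ into $\tlN$ and $M'$ into $M$, whence its values are forced by commutativity of the square. Your uniqueness step merely spells out, via the fiber description in Definition~\ref{defn:1.4}, what the paper calls ``evident.''
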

\begin{proof} Let $M' := eU'$, $M := eU$, and let $\rho': N' \to
M$, $\rho: N \to M$ denote the monoid homomorphisms  obtained from
$\nu_{U'}$ and $\nu_U$ by restriction to $N'$ and $N$. Then
$$\tlU'(N') = \STR(N',M', \rho'), \qquad \tlU(N) = \STR(N,M,\rho). $$
The map $\al$ restricts to monoid homomorphisms $\lm: N' \ds \to
N$ and $\gm: M' \to M$ with $\lm(0) = 0$, $\gm(0) = 0 $, where  $\gm$ is
order preserving. So  $\gm \circ \nu_{U'} = \nu_U \circ \al$,
hence $\gm \circ \rho' = \rho \circ \lm$, and we obtain the tangible
transmission
$$ \tlal:= \STR(\lm, \gm): \tlU'(N') \ds \To \tlU(N).$$
Clearly  $\tau_{U,N} \circ \tlal = \al \circ
\tau_{U',N'}$. Since any tangible transmission from $\tlU'(N')$ to
$\tlU(N)$ maps~ $\tlN'$ to $\tlN$ and $M'$ to $M$, it is evident
that $\tlal$ is the only such map.
\end{proof}

\begin{cor}\label{cor:1.9} Assume that $\al: U' \to U$ is a tangibly
surjective
transmission of  supertropical monoids, i.e., $\tT(U) \subset \al(\tT(U'))$, and $U'$ is unfolded. Let $N:= \al (\tT(U')_0)$, which is a
submonoid of $U$ containing $\tT(U)_0$.
\begin{enumerate} \eroman
    \item There exists a unique tangible transmission
    $$\tlal: U' \ds \To \tlU(N), $$
    called the \textbf{tangible lift} of $\al$, such that $\tau_{U,N} \circ \tlal = \al.$
    \item If $x' \in U'$, then
    $$  \tlal(x')  = \left\{
\begin{array}{lll}
  \widetilde{\al(x')} & \text{if} &  x' \in \tT(U')_0, \\[1mm]
  \al(x') & \text{if} &  x' \in eU'. \\
\end{array}
\right.
$$
\end{enumerate}
\end{cor}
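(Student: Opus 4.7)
The plan is to reduce the corollary to a direct application of Theorem \ref{thm:1.8}, by identifying the unfolded monoid $U'$ with its own tautological unfolding.

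First I would verify the hypotheses. Since $U'$ is unfolded, $N' := \tT(U')_0$ is by definition a multiplicative submonoid of $U'$ (containing itself), and since $\al$ is a monoid homomorphism, $N = \al(N')$ is a submonoid of $U$. Tangible surjectivity gives $\tT(U) \subset \al(\tT(U')) \subset N$, and $0 = \al(0) \in N$, so $\tT(U)_0 \subset N$ as required.

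Next I would observe that for the unfolded monoid $U'$, setting $N' = \tT(U')_0$ makes the unfolding $\tau_{U',N'}: \tlU'(N') \to U'$ a canonical isomorphism. Indeed $M' \cap N' = eU' \cap \tT(U')_0 = \{0\}$, so the fibers described in Definition \ref{defn:1.4} are singletons (with $\tl0 = 0$); equivalently, Construction \ref{constr:1.2} applied to the triple $(\tT(U')_0, eU', \nu_{U'}|_{\tT(U')_0})$ reproduces $U'$ itself. Under this canonical identification $U' = \tlU'(N')$, the tangible unfolding $\tlal_{N',N}: \tlU'(N') \to \tlU(N)$ supplied by Theorem \ref{thm:1.8} is precisely the desired map $\tlal: U' \to \tlU(N)$, and commutativity of the diagram in that theorem reads $\tau_{U,N} \circ \tlal = \al \circ \tau_{U',N'} = \al$.

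For part (ii) I would read off the formula from the explicit shape $\tlal = \STR(\lm, \gm)$ established in the proof of Theorem \ref{thm:1.8}, where $\lm$ is obtained from $\al$ restricted to $\tT(U')_0 \to N$ followed by the copying isomorphism $x \mapsto \tlx$ into $\tlU(N)$, and $\gm$ is the restriction $\al|_{eU'}: eU' \to eU$. This gives $\tlal(x') = \widetilde{\al(x')}$ for $x' \in \tT(U')_0$ and $\tlal(x') = \al(x')$ for $x' \in eU'$, as asserted. Uniqueness follows from the uniqueness clause of Theorem \ref{thm:1.8} together with the isomorphism $\tau_{U',N'}$: any tangible transmission $\al^\sharp: U' \to \tlU(N)$ with $\tau_{U,N} \circ \al^\sharp = \al$ pulls back along $\tau_{U',N'}^{-1}$ to a tangible transmission $\tlU'(N') \to \tlU(N)$ satisfying the hypothesis of Theorem \ref{thm:1.8}, hence agrees with $\tlal_{N',N}$.

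The only delicate point, and the main thing to get right, is the treatment of the zero element in the identification $U' = \tlU'(\tT(U')_0)$: the element $0$ lies simultaneously in $\tT(U')_0$ and in $eU'$, so one must confirm that the two-sided description in Definition \ref{defn:1.4} collapses correctly (both $x = 0$ and $\tlx = 0$) and does not spawn a spurious second zero. Once this point is dispatched, everything else is a mechanical consequence of Theorem \ref{thm:1.8}.
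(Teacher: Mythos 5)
Your proposal is correct and follows essentially the same route as the paper: the paper's proof of (i) is precisely "apply Theorem \ref{thm:1.8} with $N' := \tT(U')_0$ and observe that $\tlU'(N') = U'$ since $U'$ is unfolded," and (ii) is read off just as you do from the commutativity $\tau_{U,N}\circ\tlal = \al$ and the fact that $\tlal$ sends $\tT(U')_0$ into $\tT(\tlU(N))_0$. Your extra care about the zero element (that $\tl0$ is identified with $0$, so no spurious fiber arises) is a valid, if implicit, point in the paper's identification $\tlU'(\tT(U')_0)=U'$.
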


\begin{proof} (i): Apply Theorem~\ref{thm:1.8} with $N' :=
\tT(U')_0$, and observe that $\tlU'(N') = U'$, since ~$U'$ is
unfolded.
 \pSkip
(ii): Obvious, since $\tau_{U,N} (\tlal(x')) = \al (x')$ and
$\tlal(x') \in \tT(\tlU)_0$ iff $x' \in \tT(U')_0$.
\end{proof}
\pSkip

We are   ready to construct ``tangible lifts'' of
\m-supervaluations.

\begin{thm}\label{thm:1.10}
Assume that $\vrp: R \to U$ is a tangibly
surjective  \m-supervaluation, i.e., $\tT(U) \subset \vrp(R)$ \{e.g. $\vrp$
is surjective; $U = \vrp(R) \cup e \vrp(R)$\}. Let $N := \vrp(R)$. It is a submonoid of $U$ containing $\tT(U)$.
\begin{enumerate} \eroman
    \item The map
    $$ \tlvrp : R \To \tlU(N), \qquad a \mapsto \widetilde{\vrp(a)},$$
    with $\widetilde{\vrp(a)}$ denoting the tangible lift of
    $\vrp(a)$ w.r.t. $N$, is a tangible \m-supervaluation of $\vrp$,
    called the \textbf{tangible lift} of $\vrp$ \cite[Definition 2.3]{IKR4}.

    \item If $\vrp' : R \to U'$ is a tangible \m-supervaluation
    dominating $\vrp$, then $\vrp'$ dominates $\tlvrp$.
\end{enumerate}
\end{thm}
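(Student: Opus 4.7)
For part (i), the plan is to verify the defining properties of a tangible \m-supervaluation in turn. The covering identity $\tau_{U,N}(\widetilde{\vrp(a)}) = \vrp(a)$ is immediate from Definition~\ref{defn:1.4}. Multiplicativity and preservation of $0$ and $1$ follow from Construction~\ref{constr:1.2}, since inside $\tlU(N) = \STR(N, M, \rho)$ the copying map $x \mapsto \widetilde{x}$ is a monoid isomorphism of $N$ onto the tangible submonoid $\tlN$, and two tangibles multiply by $\widetilde{x}\cdot \widetilde{y} = \widetilde{xy}$. Tangibility is immediate because $\tlvrp(R) \subset \tlN \subset \tT(\tlU(N))_0$. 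For the \m-supervaluation axiom, the key computation is $\nu_{\tlU(N)}(\widetilde{\vrp(a)}) = \tlrho(\widetilde{\vrp(a)}) = \rho(\vrp(a)) = \nu_U(\vrp(a))$, showing that $\tlvrp$ covers the same underlying \m-valuation $v = \nu_U \circ \vrp$ as $\vrp$ does, so the axiom for $\tlvrp$ transfers from that for $\vrp$.

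For part (ii), let $\al : U' \to U$ be a transmission witnessing the domination $\al \circ \vrp' = \vrp$, and aim to construct a transmission $\bt : U' \to \tlU(N)$ with $\bt \circ \vrp' = \tlvrp$. Tangibility of $\vrp'$ gives $\vrp'(R) \subset \tT(U')_0$, and then the chain $\tT(U) \subset N = \vrp(R) = \al(\vrp'(R)) \subset \al(\tT(U'))$ shows that $\al$ is tangibly surjective. When $U'$ is unfolded, Corollary~\ref{cor:1.9} applied to $\al$ produces a tangible lift $\tlal : U' \to \tlU(N_1)$ with $N_1 := \al(\tT(U')_0) \supseteq N$, and $\bt$ is obtained from $\tlal$ by collapsing the tangible fibres of $\tlU(N_1)$ over $N_1 \setminus N$ to their ghost images in $M$; on the image of $\vrp'$ no collapse occurs, because $\al(\vrp'(a)) = \vrp(a) \in N$, and the explicit formula of Corollary~\ref{cor:1.9}(ii) gives $\bt(\vrp'(a)) = \widetilde{\al(\vrp'(a))} = \widetilde{\vrp(a)} = \tlvrp(a)$, as required.

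The main obstacle is the non-unfolded case. When tangibles $y_1, y_2 \in U'$ have a ghost product $y_1 y_2 \in eU'$, the naive definition $\bt(y) := \widetilde{\al(y)}$ for $\al(y) \in N$ clashes with $\bt(y_1 y_2) = \al(y_1 y_2) \in M$, since $\widetilde{\al(y_1)}\cdot\widetilde{\al(y_2)} \in \tlN$ is tangible. The plan is to pre-unfold $U'$ by choosing a multiplicative submonoid $N' \supseteq \tT(U')_0$ of $U'$ with $\al(N') \subset N$ (arrangeable thanks to tangible surjectivity of $\al$ and the fact that $N$ is a submonoid containing $\tT(U)$), forming $\tlU'(N')$ via Theorem~\ref{thm:1.5}, lifting $\vrp'$ into $\tlU'(N')$ using part (i), and then invoking Theorem~\ref{thm:1.8} to obtain the required tangible transmission $\tlU'(N') \to \tlU(N)$; the tangibility of $\vrp'$ then confines the delicate products to subsets of $\tlU'(N')$ where the assignment is forced, so the resulting map descends to the desired $\bt$ on $U'$ compatible with $\bt \circ \vrp' = \tlvrp$.
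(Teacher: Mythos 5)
Part (i) of your proposal is correct and is essentially the paper's own argument (multiplicativity, $0$, $1$, tangibility, and $e\tlvrp=e\vrp$), so there is nothing to add there.

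In part (ii) there is a genuine gap, and it is precisely the point the paper disposes of at the outset: the reduction \emph{``we may assume $\vrp'$ is surjective.''} Since dominance of \m-supervaluations is a condition on the values in $R$ (conditions D1--D3 of [IKR4, Definition 7.2]), you may replace $\vrp'$ by its corestriction to $\vrp'(R)\cup e\vrp'(R)$ without changing anything. Once $\vrp'$ is surjective, tangibility forces $\vrp'(R)=\tT(U')_0$, so $U'$ is \emph{automatically} unfolded, and moreover $\al(\tT(U')_0)=\al(\vrp'(R))=\vrp(R)=N$ exactly; Corollary \ref{cor:1.9} then gives $\tlal:U'\to\tlU(N)$ directly and $\tlal\circ\vrp'=\tlvrp$. (This reduction is also what justifies your very first step, the existence of a transmission $\al$ with $\vrp=\al\circ\vrp'$, which is not part of the definition of dominance for non-surjective \m-supervaluations.) Your substitutes for this reduction do not work. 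First, the ``collapse'' $\tlU(N_1)\to\tlU(N)$ sending $\tlx\mapsto ex$ for $x\in N_1\sm N$ is in general not multiplicative, hence not a transmission: if $x\in N_1\sm N$, $y\in N_1$ and $xy\in N$ (nothing excludes this, since $N=\al(\vrp'(R))$ need not be divisor-closed in $N_1=\al(\tT(U')_0)$), then $\tlx\,\tly=\widetilde{xy}$ is sent to the tangible $\widetilde{xy}$, while the product of the images is the ghost $exy$.

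Second, in the non-unfolded case your pre-unfolding requires a submonoid $N'\supset\tT(U')_0$ of $U'$ with $\al(N')\subset N$; this already forces $\al(\tT(U')_0)\subset N$, which fails in general because $\al$ is \emph{not} assumed tangible -- $\al$ may send tangibles of $U'$ to ghosts of $U$ lying outside $N\cap M=G(\vrp)$. So Theorem \ref{thm:1.8} cannot be invoked as you propose. Finally, even where the transmission $\tlU'(N')\to\tlU(N)$ of Theorem \ref{thm:1.8} exists, it does not ``descend'' through $\tau_{U',N'}$ to a map on $U'$: on a two-point fibre $\{x',\tlx'\}$ with $x'\in M'\cap N'$ it sends $\tlx'$ to the tangible $\widetilde{\al(x')}$ but $x'$ to the ghost $\al(x')$, so it is not constant on the fibres of $\tau_{U',N'}$. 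All three difficulties evaporate once you make the surjectivity reduction first; without it, the argument as written does not go through.
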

\begin{proof} (i): $\tlvrp$ is multiplicative, $\tlvrp(0) = 0$,  $\tlvrp(1) =
1$, and $e \tlvrp = e \vrp$ is an \m-valuation. Thus ~$\tlvrp$ is
an \m-supervaluation. By construction, $\tlvrp$ is tangible.
\pSkip
(ii): We may assume that the \m-supervaluation $\vrp': R \to U'$
is surjective, and hence   $\vrp'(R) \supset \tT'(U)_0$. Since
$\vrp'$ is tangible, this forces $\vrp'(R) =\tT'(U)_0$. Thus
$\tT'(U)_0$ is a submonoid of $U'$, i.e., $U'$ is unfolded. Since
$\vrp'$ dominates $\vrp$, there exists a transmission $\al: U' \to
U$ with $\vrp = \al \circ \vrp'$, such that  $$\al(\tT'(U)_0) = \al
(\vrp'(R)) = \vrp (R) = N.$$ Thus we have the  tangible lift of $\al$,
$$\tlal: U' \ds \To \tlU(N).$$ So, for any $a\in R,$
$$ \tlal(\vrp'(a)) = [\al (\vrp'(a))]^\sim  = \widetilde{\vrp(a)} = \tlvrp(a).$$
Thus $\tlvrp = \tlal \circ \vrp',$ which proves that $\vrp'$
dominates $\tlvrp.$
\end{proof}

\begin{addendum}\label{adden:1.11} As the proof  has shown, if the
\m-valuation $\vrp'$ is surjective, then $U'$ is unfolded, and the
transmission \cite[Definition~5.3]{IKR1} $$\al_{\tlvrp,\vrp'}: U' \ds \To \tlU(N)$$  is the  tangible lift of $\al_{\vrp,
\vrp'}: U' \to U$.
\end{addendum}

\begin{cor}\label{cor:1.12} If $\vrp$, $\psi$ are
\m-supervaluations covering $v$ and $\vrp \leq \psi$, then $\tlvrp
\leq \tlpsi$.
\end{cor}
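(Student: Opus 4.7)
The plan is to reduce the statement to Theorem~\ref{thm:1.10}.(ii) applied to the tangible lift $\tlpsi$. To do so, I would first argue that $\tlpsi$ itself is a tangible \m-supervaluation which dominates $\vrp$, and then invoke the universal property of $\tlvrp$.

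More precisely, let $N := \vrp(R)$ and $N' := \psi(R)$, so that $\tlvrp: R \to \tlU(N)$ and $\tlpsi: R \to \tlU'(N')$ are the tangible lifts given by Theorem~\ref{thm:1.10}.(i). By construction one has $\tau_{U,N} \circ \tlvrp = \vrp$ and $\tau_{U',N'} \circ \tlpsi = \psi$, since $\tau_{U',N'}(\widetilde{\psi(a)}) = \psi(a)$ for every $a \in R$. Because $\tau_{U',N'}$ is a (fiber-contracting) transmission, this identity directly shows that $\psi$ is dominated by $\tlpsi$, i.e., $\psi \leq \tlpsi$. Combining this with the hypothesis $\vrp \leq \psi$ and the transitivity of domination yields $\vrp \leq \tlpsi$.

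At this point I would apply Theorem~\ref{thm:1.10}.(ii): since $\tlpsi$ is a tangible \m-supervaluation dominating $\vrp$, it automatically dominates the minimal such lift, namely $\tlvrp$. This is exactly the assertion $\tlvrp \leq \tlpsi$, completing the proof.

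I do not expect a genuine obstacle here; the only point that requires a moment of care is the implicit tangible-surjectivity assumption needed for Theorem~\ref{thm:1.10} to be applicable to both $\vrp$ and $\psi$ (so that the tangible lifts $\tlvrp$ and $\tlpsi$ exist in the first place). Once that is in place, the corollary is a clean consequence of the universal property of tangible lifts together with the transitivity of the domination relation on \m-supervaluations.
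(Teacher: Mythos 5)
Your argument is correct and is essentially the paper's own proof: the paper also notes $\vrp \leq \psi \leq \tlpsi$ and then applies Theorem~\ref{thm:1.10}.(ii) to the tangible \m-supervaluation $\tlpsi$ dominating $\vrp$ to conclude $\tlvrp \leq \tlpsi$. Your extra remarks (why $\psi \leq \tlpsi$ holds via $\tau_{U',N'} \circ \tlpsi = \psi$, and the implicit tangible-surjectivity needed for the lifts to exist) just make explicit what the paper leaves tacit.
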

\begin{proof} We have  $\vrp \leq \psi \leq \tlpsi$, and from
Theorem~\ref{thm:1.10}.(ii) it follows  that $\tlvrp \leq \tlpsi$.
\end{proof}

\section{The partial tangible lifts of an
$\m$-supervaluation}\label{sec:2}

In what follows $v: R \to M$ is a fixed \m-valuation and
$\vrp: R \to U$ is a tangible surjective \m-supervaluation
covering $v$. (Often $v$ and $\vrp$ will both be surjective.)
 The tangible  lift $\tlvrp: R \to
\tlU$ (cf. Theorem \ref{thm:1.10}) was introduced  in  \S\ref{sec:1}, and now we strive for an explicit
description of the \m-supervaluations $\psi$ covering $v$ with
$\vrp \leq \psi \leq \tlvrp$.


\begin{defn}\label{defn:2.1}Given an \m-supervaluation $\psi: R \to V$ covering $v: R \to M$,
we call
$$ G(\psi) := \psi(R) \cap M = \{ \psi(a) \ds | a\in R, \ \psi(a)= v(a) \}$$
the \textbf{ghost value set} of $\psi$. \{Notice that $eV = M$.\}
\end{defn}

\begin{lem}\label{lem:2.2}
Let $\psi_1, \psi_2$ be \m-supervaluations covering $v$. If
$\psi_1 \geq \psi_2$, then $G(\psi_1) \subset ~G(\psi_2)$. If
$\psi_1 \sim \psi_2$, then $G(\psi_1) = G( \psi_2)$.
\end{lem}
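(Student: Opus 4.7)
The plan is to unwind the definition of dominance and use the fact that the connecting transmission restricts to the identity on the common ghost ideal $M$. Recall that $\psi_1 \geq \psi_2$ means precisely that there exists a transmission $\alpha\colon V_1 \to V_2$ over $M$ with $\psi_2 = \alpha \circ \psi_1$; since both $\psi_1$ and $\psi_2$ cover the same \m-valuation $v\colon R \to M$, we have $eV_1 = eV_2 = M$ and $\alpha|_M = \id_M$ by the very definition of a transmission over $M$ (this is built into the notion used throughout \S\ref{sec:1} and rests on the standard fact that the ghost part of a transmission is a semiring homomorphism between the ghost ideals, see \cite[Definition 1.4]{IKR4}).

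For the first assertion, I would pick $b \in G(\psi_1)$ and write $b = \psi_1(a)$ with $b \in M$, for some $a \in R$. Applying $\alpha$ gives
\[
  \psi_2(a) \;=\; \alpha(\psi_1(a)) \;=\; \alpha(b) \;=\; b,
\]
so $b = \psi_2(a) \in M$, i.e., $b \in G(\psi_2)$. This yields the inclusion $G(\psi_1) \subset G(\psi_2)$. (Equivalently, one may use the characterization $G(\psi) = \{v(a) \mid \psi(a) = v(a)\}$: if $\psi_1(a) = v(a)$, then $\psi_2(a) = \alpha(v(a)) = v(a)$ since $v(a) \in M$.)

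The second assertion is then immediate: $\psi_1 \sim \psi_2$ means $\psi_1 \geq \psi_2$ and $\psi_2 \geq \psi_1$, so the first part applied twice yields $G(\psi_1) \subset G(\psi_2) \subset G(\psi_1)$, giving the equality.

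There is no real obstacle here; the only subtlety worth flagging is the justification that the transmission $\alpha$ implementing dominance is over $M$ (i.e., fixes $M$ pointwise), which follows from the compatibility $e\psi_2 = v = e\psi_1$ together with the normalization used for dominance of \m-supervaluations covering a fixed $v$. Once this is noted, the argument is a one-line chase along $\alpha$.
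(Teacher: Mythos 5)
Your computation is the right one, and the second half (symmetry for $\sim$) is fine, but the justification of the key step has a gap. In the framework used here, dominance of \m-supervaluations is \emph{defined} by the conditions D1--D3 of \cite[Definition~7.2]{IKR4}, not by the existence of a transmission; the statement ``$\psi_1 \geq \psi_2$ means precisely that there exists a transmission $\alpha$ with $\psi_2 = \alpha \circ \psi_1$'' is a theorem that is only available when the dominating supervaluation $\psi_1$ is surjective (this is exactly how it is invoked in the proof of Theorem~\ref{thm:1.10}(ii), where surjectivity is arranged first; note also that the blanket surjectivity assumptions in \S\ref{sec:2} are introduced only \emph{after} Lemma~\ref{lem:2.2}). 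Since the lemma assumes no surjectivity, your appeal to $\alpha$ is not yet justified: you would have to reduce to surjective representatives, or work on the submonoid generated by $\psi_1(R)$, and check that this disturbs neither the dominance relation nor the ghost value sets. A second, smaller inaccuracy: even when $\alpha$ exists, $\alpha|_M = \id_M$ requires $v$ to be surjective; what actually follows from $e\psi_2 = v = \alpha^{\nu} \circ e\psi_1$ is that $\alpha$ fixes $v(R)$ pointwise, which does suffice for your chase because $G(\psi_1) \subset v(R)$, but the claim as stated is too strong.

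The paper's proof avoids all of this by quoting condition D3 directly: $\psi_1 \geq \psi_2$ gives that $\psi_1(a) \in M$ forces $\psi_2(a) \in M$, and since both cover $v$, an element of $G(\psi_1)$ is of the form $\psi_1(a) = v(a)$ while $\psi_2(a) \in M$ means $\psi_2(a) = v(a)$; hence $G(\psi_1) = \{ v(a) \mid \psi_1(a) \in M \} \subset \{ v(a) \mid \psi_2(a) \in M \} = G(\psi_2)$, with no transmission and no surjectivity needed. So your argument is repairable (once $\alpha$ exists and is known to fix $v(R)$, your one-line chase is correct), but as written it rests on a characterization of dominance that is not the definition and requires a hypothesis you have not secured; the clean fix is to cite D3, which is all the lemma needs.
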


\begin{proof} Let $a\in R$. If $\psi_1 \geq \psi_2$, then $\psi_1(a) \in
M$ implies that $\psi_2(a) \in M$, due to dominance, condition $\D 3$ in \cite[Definition~7.2]{IKR4}. Thus,
 $\psi_1(a) \in M$ iff $\psi_2(a)
\in M$, for $\psi_1 \sim \psi_2$.
\end{proof}

\begin{lem}\label{lem:2.3} Assume that the \m-valuation $v: R \to
M$ is surjective. Then the ghost value set $G(\psi)$ of any
\m-supervaluation $\psi$ covering $v$ is an ideal of the semiring
$M$.
\end{lem}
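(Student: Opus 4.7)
The plan is to verify the three defining conditions for $G(\psi)$ to be an ideal of the bipotent semiring $M$: containing $0$, closure under the addition of $M$, and closure under multiplication by elements of $M$. Containment of $0$ is immediate since $\psi(0) = 0 \in M$. Closure under addition is essentially trivial because $M$ is bipotent: for $c_1 = \psi(a_1)$ and $c_2 = \psi(a_2)$ in $G(\psi)$ one has $c_1 + c_2 \in \{c_1,c_2\} \subset G(\psi)$, so nothing needs to be checked beyond the bipotence axiom on $M = eV$.

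The heart of the matter, and the only place where surjectivity of $v$ is actually invoked, is closure under multiplication. Given $c = \psi(a) \in G(\psi)$ (so that $\psi(a) \in M$, and hence $\psi(a) = v(a)$ because $\psi$ covers $v$) and an arbitrary $m \in M$, I would use surjectivity of $v$ to choose $b \in R$ with $v(b) = m$. Then $\psi(ab) = \psi(a)\psi(b)$ in the supertropical monoid $V$. Since $\psi(a) \in M = eV$ and $M$ is multiplicatively absorbing in $V$ (for any $x,y \in V$ one has $(ex)\cdot y = e(xy) \in M$), the product $\psi(a)\psi(b)$ lies in $M$, so $\psi(ab) \in G(\psi)$. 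Applying the ghost map (equivalently, observing that $\psi(ab) = e\psi(ab)$), we get $\psi(ab) = v(ab) = v(a) v(b) = \psi(a)\cdot m = cm$. Hence $cm = \psi(ab) \in G(\psi)$, as required.

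There is no real obstacle here; the argument is essentially a bookkeeping exercise, and the one substantive point is identifying the role of the surjectivity hypothesis: it is exactly what is needed to realize every $m \in M$ as a value $v(b) = e\psi(b)$, thereby turning the would-be product $cm$ into an honest image $\psi(ab)$ for some $ab \in R$. Without surjectivity of $v$ one could only secure the stability of $G(\psi)$ under multiplication by elements of the image $v(R)$, and $G(\psi)$ would in general be merely a $v(R)$-submodule of $M$ rather than a full ideal.
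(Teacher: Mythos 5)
Your proof is correct and follows essentially the same route as the paper: realize an arbitrary $m\in M$ as $m=v(b)=e\psi(b)$ using surjectivity of $v$, compute $cm=\psi(a)\psi(b)=\psi(ab)\in\psi(R)\cap M$, and dispose of additive closure via bipotence of $M$. The extra bookkeeping (the zero element, the remark on what fails without surjectivity) is fine but adds nothing beyond the paper's argument.
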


\begin{proof}
For  $x \in G(\psi)$, $y \in M$ there exist $a,b \in R$ with
$\psi(a) =x$, $e \psi(b) = y$, implying that
$$ xy = e \psi(a) \psi(b) = \psi(a) \psi(b) = \psi(a b).$$
Thus $xy \in \psi(R) \cap M = G(\psi)$, which proves that $G(\psi)
\cdot M \subset G(\psi)$. Since $M$ is bipotent, $G(\psi)$ is also
closed under addition.
\end{proof}

\begin{thm}\label{thm:2.4} Assume that $\vrp: R \to U$ is an
\m-supervaluation  covering $v: R \to M$, and that $\psi_1,
\psi_2$ are \m-supervaluations covering $v$ with $$ \vrp \leq
\psi_1 \leq \tlvrp, \quad  \vrp \leq \psi_2 \leq \tlvrp.$$
\begin{enumerate} \eroman
    \item $\psi_1 \geq \psi_2 \dss \Leftrightarrow G(\psi_1) \subset
    G(\psi_2)$;
    \item $\psi_1 \sim \psi_2 \dss \Leftrightarrow G(\psi_1) =
    G(\psi_2)$.
\end{enumerate}
\end{thm}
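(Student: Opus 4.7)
The plan is to prove (i); statement (ii) will then follow by applying (i) in both directions. The forward implication of (i) is Lemma~\ref{lem:2.2}, so the content lies in the converse. After replacing each $\psi_i$ by an equivalent surjective supervaluation, I may assume $\psi_i:R\twoheadrightarrow V_i$ with $eV_i=M$. The hypothesis $\psi_i\le\tlvrp$ supplies a transmission $\bt_i:\tlU\to V_i$ with $\psi_i=\bt_i\circ\tlvrp$, and $\vrp\le\psi_i$ supplies $\gm_i:V_i\to U$ with $\vrp=\gm_i\circ\psi_i$ and $\gm_i\circ\bt_i=\tau_{U,N}$ on $\tlvrp(R)\cup M$. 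All three of $\bt_i$, $\gm_i$, $\tau_{U,N}$ restrict to the identity on the common ghost ideal~$M$.

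The structural heart of the argument is a classification of $\bt_i$ in terms of $G(\psi_i)$. For $\widetilde{c}\in\tlN$ with $c\in N\sm M$, the equality $\gm_i(\bt_i(\widetilde{c}))=c\in\tT(U)$, together with the fact that $\gm_i$ maps $M$ to $M$, forces $\bt_i(\widetilde{c})\in\tT(V_i)$; for $c\in N\cap M=G(\vrp)$, $\bt_i(\widetilde{c})$ is either tangible or equals~$c$. Moreover $\bt_i$ identifies no two distinct tangibles of $\tlN$: if $\bt_i(\widetilde{x})=\bt_i(\widetilde{y})$ with both images tangible, applying $\gm_i$ gives $x=y$; and if both images lie in $M$, then $\bt_i(\widetilde{x})=x$, $\bt_i(\widetilde{y})=y$ yield the same conclusion. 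Setting $K_i:=\{c\in G(\vrp)\,:\,\bt_i(\widetilde{c})\in M\}\subset G(\vrp)$, the map $\bt_i$ is pinned down by $K_i$, and inspecting $\psi_i(a)=\bt_i(\widetilde{\vrp(a)})$ case-by-case yields $K_i=G(\psi_i)$. Consequently the equivalence relation $E_i$ on $\tlU$ induced by $\bt_i$ has classes $\{c,\widetilde{c}\}$ for $c\in G(\psi_i)$ and singletons everywhere else.

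Assuming now $G(\psi_1)\subset G(\psi_2)$, every non-trivial pair identified by $E_1$ is of the form $(c,\widetilde{c})$ with $c\in G(\psi_1)\subset G(\psi_2)$ and is therefore also identified by $E_2$; so $E_1\subset E_2$. Since $\bt_1$ is surjective, $\bt_2$ factors uniquely as $\al\circ\bt_1$; multiplicativity, the semiring-homomorphism property on $M$, and the transmission rules TM1--TM5 all transfer from $\bt_2$ to $\al$ through the surjection $\bt_1$, and $\al(x)=\bt_2(x)=x$ for $x\in M$. Composing with $\tlvrp$ gives $\al\circ\psi_1=\psi_2$, i.e.\ $\psi_1\ge\psi_2$, proving (i); statement (ii) is then the special case $G(\psi_1)=G(\psi_2)$. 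The main obstacle I anticipate is the classification step---carefully ruling out both the collapse of a tangible $\widetilde{c}$ with $c\in N\sm M$ and any identification of distinct tangibles in $\tlN$---so that $\bt_i$, and hence $\psi_i$, is controlled entirely by the single ideal $K_i=G(\psi_i)\subset G(\vrp)$. Once this is secured, the factorization delivering $\al$ is routine.
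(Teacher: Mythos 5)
Your proof is correct and is essentially the paper's argument: both reduce everything to the equivalence relations on $\tlU$ induced by the $\psi_i$, whose only nontrivial classes are the pairs $\{c, s(c)\}$ with $c \in G(\psi_i)$, so that inclusion of relations is visibly the same as inclusion of ghost value sets. The only difference is presentational: where the paper invokes the identification $\psi_i \sim \tlvrp/E_i$ with MFCE-relations $E_i$ and the equivalence $E_1 \subset E_2 \Leftrightarrow \psi_1 \geq \psi_2$, you rederive this by hand via the transmissions $\bt_i,\gm_i$ and the factorization of $\bt_2$ through the surjection $\bt_1$ (note only that, just as in the paper's ``without loss of generality $\vrp$ is surjective'', your reduction should also make $\vrp$, hence $\tlvrp$, surjective, since the existence of $\bt_i$ from $\psi_i \leq \tlvrp$ uses surjectivity of the dominating supervaluation).
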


\begin{proof} Without loss of generality, assume that $\vrp$ is
surjective. Then also the \m-super-valuations $\psi_1,$ $\psi_2,$
$ \tlvrp$ are surjective and,  by Corollary \ref{cor:1.12}, the tangible
lifts $\tlpsi_1$ and $\tlpsi_2$ are both equivalent to~$\tlvrp$.

Again,  without loss of generality,  assume that $\vrp =
\tlvrp / E := \pi_E \circ \tlvrp$ with $E$ an \MFCE-relation on
$\tlU$, and that $\psi_i = \tlvrp / E_i$ with an \MFCE-relation
$E_i$ ($i =1,2$). Let us describe these relations $E, E_1, E_2$
explicitly. We have $\tlU = \tlN \cup M$, with
$$ \tlN := \tT(\tlU)_0 = \tlvrp(R), \qquad \tlN \cap M = \{ 0\},$$
furthermore $U = N \cup M$ with
$$ N:= \vrp (R), \qquad N\cap M = G(\vrp),$$
and a copying isomorphism $$s: N \ds \isoTo \tlN$$ of
monoids (new notation!), which sends each $x \in N$ to its
tangible lift $\tlx$, as explained in~ \S\ref{sec:1} (Definition
\ref{defn:1.4}, Proof of Theorem \ref{thm:1.5}.i). Notice that
$es(x)=x $ for $x \in N \cap M = G(\vrp).$

The relation $E$ has the 2-point equivalence classes $\{x, s(x)
\}$ with $x$ running through~ $G(\vrp)$, while all other
$E$-equivalence classes are one-point sets. Analogously, $E_i$ has
the 2-point set equivalence classes $\{x, s(x) \}$ with $x$
running through $G(\psi_i) \subset G(\vrp)$, while again all other
$E$-equivalence classes are one-point sets. Thus it is obvious
that\footnote{As in \cite{IKR1} we view an equivalence relation on
a set $X$ as a subset of $X \times X$ in the usual way.} $E_1
\subset E_2$ iff $G(\psi_1) \subset G(\psi_2)$. But $E_1 \subset
E_2$ means that $\psi_1 \geq \psi_2$. This gives  claim (i), and
claim (ii) follows.
\end{proof}

\begin{defn}\label{defn:2.5} We call the monoid isomorphism
$$ s: \vrp(R) \ds \To \tT(\tlU)_0 = \tlvrp(R),$$
i.e., the copying isomorphism $s: N \iso \tlN$ from the
proof of Theorem \ref{thm:2.4}, the \textbf{tangible lifting map
of} $\vrp$.
\end{defn}

Note  that $s(x)y
= s(xy)$  for $x\in \vrp(R)$, $y\in \tT(\tlU)_0$. \pSkip

\emph{Henceforth we assume  that the \m-valuation $v: R \to M$ is
surjective and that  $\vrp: R \to U$ is a surjective
\m-supervaluation with $e \vrp = v$.} The question arises whether
every ideal $\mfa$ of~$M$ with $\mfa \subset G(\vrp)$ appears as
the ghost value set $G(\psi)$ of some \m-supervaluation $\psi$
covering $v$ with $\vrp \leq \psi \leq \tlvrp$. This is indeed
true.

\begin{construction}\label{const:2.6}
We employ the tangible lifting map $ s: \vrp(R) \ds \to \tlvrp(R)
= \tT(\tlU)_0$ defined above. Assume that $\mfa$ is an ideal of
$M$ contained in $G(\vrp)$. We have $$s(\mfa) \cdot \tlU \ds
\subset s(\mfa) \cup M,$$ since $s(x)y = s(xy) \in s(\mfa)$ for $x
\in \mfa$ and $y \in \tT(U)$. We conclude that $s(\mfa) \cup M$ is
an ideal of $U$. Let
$$ E_\mfa : = E(\tlU, s(\mfa)) = E(\tlU, s(\mfa)\cup M)$$
and $\tlU_\mfa := \tlU/ E_\mfa$. We regard $\tlU_\mfa$  as a
subset of $\tlU$, as indicated in \cite[Convention~3.3.a]{IKR4}.
The map $\pi_{E_\mfa} : \tlU \onto \tlU_\mfa$ is the ideal
compression with ghost kernel $s(\mfa) \cup M$, and
$$ \tlvrp_\mfa  := \tlvrp/ E_\mfa = \pi_{E_\mfa} \circ \tlvrp : R \dss \To \tlU_\mfa$$
is an \m-supervaluation. For any $a\in R$
    $$  \tlvrp_\mfa (a)  = \left\{
\begin{array}{lll}
 \vrp(a) = v(a) & \text{if} \ \vrp(a) \in \mfa , &\\[1mm]
  \tlvrp(a) & \text{else}   \ . &  \\
\end{array}
 \right.
$$
Clearly $\vrp \leq \tlvrp_\mfa \leq \tlvrp$ and $G(\tlvrp_\mfa) =
\mfa$. We call $\tlvrp_\mfa$ \textbf{the tangible lift of $\vrp$
outside $\mfa$}, and we call any such map $\tlvrp_\mfa$ a
\textbf{partial tangible lift of $\vrp$.}
\end{construction}

Let $[\vrp, \tlvrp]$ denote the ``interval'' of the poset $
\Cov_m(v)$ containing all classes $[\psi]$ with $\vrp \leq \psi
\leq \tlvrp$, and let $[0,G(\vrp)]$ be the set of  ideals $\mfa$
of $M$ with $\mfa~ \subset~ G(\vrp)$, ordered by inclusion. By
Lemmas  \ref{lem:2.2} and \ref{lem:2.3} we have a well defined order
preserving map
$$ [\vrp, \tlvrp] \ds \To [0, G(\vrp)],$$ sending each class
$[\psi] \in [\vrp, \tlvrp]$ to the ideal $G(\psi)$. By Theorem
\ref{thm:2.4} this map is injective, and by Construction
\ref{const:2.6} we know that it is also surjective. Thus we we have
proved

\begin{thm}\label{thm:2.7}
The map
$$ [\vrp, \tlvrp] \ds \To [0, G(\vrp)], \qquad [\psi] \mapsto G(\psi),$$
is a well defined order preserving bijection. The inverse of this
map sends an ideal $\mfa \subset G(\vrp)$  to the class
$[\tlvrp_\mfa]$ of the tangible lift of $\vrp$ outside $\mfa$.
\end{thm}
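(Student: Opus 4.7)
The plan is to assemble Theorem \ref{thm:2.7} by stitching together the three earlier results (Lemmas \ref{lem:2.2}, \ref{lem:2.3}, Theorem \ref{thm:2.4}) with Construction \ref{const:2.6}, verifying in turn that the map $[\psi] \mapsto G(\psi)$ is (a) well defined and lands in $[0,G(\vrp)]$, (b) order preserving, (c) injective, and (d) surjective, with the inverse given by $\mfa \mapsto [\tlvrp_\mfa]$.

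First I would check well-definedness. Given $[\psi] \in [\vrp,\tlvrp]$, Lemma \ref{lem:2.2} applied to $\psi \sim \psi$ shows $G(\psi)$ depends only on the class $[\psi]$; Lemma \ref{lem:2.3} (using that $v$ is surjective, an assumption in force throughout \S\ref{sec:2}) gives that $G(\psi)$ is an ideal of $M$; and Lemma \ref{lem:2.2} applied to the domination $\vrp \leq \psi$ gives $G(\psi) \subset G(\vrp)$. So the assignment indeed takes values in $[0,G(\vrp)]$.

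Next, order preservation is the content of the forward implication in Theorem \ref{thm:2.4}(i): if $[\psi_1] \geq [\psi_2]$ in $[\vrp,\tlvrp]$, then $G(\psi_1) \subset G(\psi_2)$. Injectivity is the forward implication of Theorem \ref{thm:2.4}(ii): $G(\psi_1) = G(\psi_2)$ forces $\psi_1 \sim \psi_2$, i.e.\ $[\psi_1] = [\psi_2]$.

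For surjectivity I would invoke Construction \ref{const:2.6}: given any ideal $\mfa \subset G(\vrp)$ of $M$, the explicitly built $\tlvrp_\mfa = \pi_{E_\mfa} \circ \tlvrp$ is an \m-supervaluation covering $v$ with $\vrp \leq \tlvrp_\mfa \leq \tlvrp$ and ghost value set exactly $\mfa$. Hence $\mfa \mapsto [\tlvrp_\mfa]$ is a well-defined right inverse of $[\psi] \mapsto G(\psi)$, and since we have already established injectivity, it is the two-sided inverse. Finally, the reverse implication in Theorem \ref{thm:2.4}(i) shows this inverse is itself order preserving, completing the order-isomorphism claim.

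The work is essentially bookkeeping: every piece has been proved, so I anticipate no serious obstacle. The one place to exercise care is to stay inside the interval $[\vrp,\tlvrp]$ when invoking Theorem \ref{thm:2.4}, since the equivalences there use the ghost lifting structure of $\tlvrp$ in an essential way; outside the interval the map $[\psi] \mapsto G(\psi)$ need not be injective, and that is exactly why the theorem is stated for this interval.
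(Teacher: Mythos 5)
Your proposal is correct and follows essentially the same route as the paper, which likewise obtains well-definedness and order preservation from Lemmas \ref{lem:2.2} and \ref{lem:2.3}, injectivity from Theorem \ref{thm:2.4}, and surjectivity (with the explicit inverse $\mfa \mapsto [\tlvrp_\mfa]$) from Construction \ref{const:2.6}. Your extra bookkeeping (e.g.\ noting that the reverse implication of Theorem \ref{thm:2.4}(i) makes the inverse order preserving) is a harmless refinement of the same argument.
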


The  poset $\Cov_m(v)$ is a complete lattice (cf.
\cite[Corollary~7.5]{IKR4}). The poset $I(M)$ consisting of the
ideals of $M$ and ordered by inclusion,  is a complete lattice as
well. Indeed, the infimum of a family $(\mfa_i \ds | i\in I)$ in
$I(M)$ is the ideal $\bigcap_i \mfa_i$,   while the supremum is
the ideal $\sum_i \mfa_i = \bigcup_i \mfa_i.$    \{Recall once
more that every subset of $M$ is closed under addition.\} The
intervals $[\vrp, \tlvrp]$ and $[0,G(\vrp)]$ are again complete
lattices, and thus the map $ [\vrp, \tlvrp] \ds \to [0, G(\vrp)]$
in Theorem \ref{thm:2.7} is an anti-isomorphism of complete
lattices. This implies the following

\begin{cor}\label{cor:2.8}
Assume that $(\psi_i \ds | i \in I)$ is a family  of
supervaluations covering $v$ with $\vrp \leq \psi_i \leq \tlvrp$
for each $i \in I$. Let $\bigvee_i \psi_i$ and $\bigwedge_i
\psi_i$ denote respectively representatives of the classes
$\bigvee_i [\psi_i]$ and $\bigwedge_i [\psi_i]$ (as described in
\cite[\S7]{IKR1}). Then
$$G\bigg(\bigvee_i \psi_i \bigg) = \bigcap_i G(\psi_i),  \qquad  G\bigg(\bigwedge_i \psi_i\bigg) = \bigcup_iG(\psi_i).$$
\end{cor}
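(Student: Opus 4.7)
The plan is to derive both identities directly from the bijection furnished by \thmref{thm:2.7}, combined with the order-reversing equivalence $\psi_1 \geq \psi_2 \Leftrightarrow G(\psi_1) \subset G(\psi_2)$ of \thmref{thm:2.4}. As noted just before the statement, the map $[\psi] \mapsto G(\psi)$ is an anti-isomorphism of complete lattices between $[\vrp,\tlvrp]$ and $[0,G(\vrp)]$, and any such anti-isomorphism must interchange suprema with infima; so the two identities are, in principle, automatic. What the proof must actually do is exhibit the relevant inclusions using \lemref{lem:2.2} (monotonicity) in one direction and the universal partial tangible lift from Construction~\ref{const:2.6} in the other.

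For the first identity, I start from $\bigvee_i \psi_i \geq \psi_j$ for every $j$, which via \lemref{lem:2.2} gives $G\bigl(\bigvee_i \psi_i\bigr) \subset G(\psi_j)$, and intersecting over $j$ yields $G\bigl(\bigvee_i \psi_i\bigr) \subset \bigcap_i G(\psi_i)$. For the reverse inclusion, I set $\mfa := \bigcap_i G(\psi_i)$, which is an ideal of $M$ contained in $G(\vrp)$, invoke Construction~\ref{const:2.6} to produce the partial tangible lift $\tlvrp_\mfa$ with $\vrp \leq \tlvrp_\mfa \leq \tlvrp$ and $G(\tlvrp_\mfa) = \mfa$, and apply \thmref{thm:2.4}(i): since $\mfa \subset G(\psi_i)$ for each $i$, we get $\tlvrp_\mfa \geq \psi_i$, hence $\tlvrp_\mfa \geq \bigvee_i \psi_i$, and a last use of \lemref{lem:2.2} yields $\bigcap_i G(\psi_i) = G(\tlvrp_\mfa) \subset G\bigl(\bigvee_i \psi_i\bigr)$.

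The second identity proceeds by the mirror argument. From $\bigwedge_i \psi_i \leq \psi_j$ one obtains $G\bigl(\bigwedge_i \psi_i\bigr) \supset G(\psi_j)$ for all $j$, and taking the union gives $G\bigl(\bigwedge_i \psi_i\bigr) \supset \bigcup_i G(\psi_i)$. Conversely, I put $\mfb := \bigcup_i G(\psi_i)$, again an ideal of $M$ inside $G(\vrp)$, and form $\tlvrp_\mfb$ via Construction~\ref{const:2.6}. Since $G(\psi_i) \subset \mfb = G(\tlvrp_\mfb)$, \thmref{thm:2.4}(i) delivers $\psi_i \geq \tlvrp_\mfb$ for every $i$, so $\bigwedge_i \psi_i \geq \tlvrp_\mfb$, and \lemref{lem:2.2} closes the argument with $G\bigl(\bigwedge_i \psi_i\bigr) \subset \bigcup_i G(\psi_i)$.

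I do not anticipate a serious obstacle. The one point meriting care is verifying that in $I(M)$ the meet and join really are $\bigcap_i \mfa_i$ and $\bigcup_i \mfa_i$; this was already justified in the paragraph preceding the statement (using bipotence of $M$, so that every subset is automatically closed under addition). Beyond that, the proof is simply an unpacking of \thmref{thm:2.4}, \thmref{thm:2.7}, and the universal property of $\tlvrp_\mfa$ supplied by Construction~\ref{const:2.6}.
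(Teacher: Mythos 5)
Your proof is correct and follows the paper's route: the paper deduces the corollary from the observation that $[\psi]\mapsto G(\psi)$ is an anti-isomorphism of complete lattices between $[\vrp,\tlvrp]$ and $[0,G(\vrp)]$ (with meet and join in $I(M)$ being intersection and union), and your argument simply unpacks that anti-isomorphism explicitly via Lemma \ref{lem:2.2}, Theorem \ref{thm:2.4}, and Construction \ref{const:2.6}. No gaps; the explicit verification of the reverse inclusions using $\tlvrp_\mfa$ and $\tlvrp_\mfb$ is exactly the content the paper leaves implicit.
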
 \pSkip

We turn to the case where $\vrp: R \to U$ is a supervaluation,
i.e., the supertropical monoid~ $U$ is a semiring. We want to
characterize the partial tangible lifts $\psi$ of $\vrp$ which are
again supervaluations; in other terms, we want to determine the
subset $[\vrp, \tlvrp] \cap \Cov (v)$ of the interval $[\vrp,
\tlvrp]$ of $\Cov_m(v)$.

The set $Y(v)$ introduced at  the end of \cite[\S7]{IKR4} will
play a decisive role. It consists of the products $a b \in R$ of
elements $a,b \in R$  for which there exists some $a' \in R$ with
$$v(a') < v(a), \quad v(a'b) = v(ab) \neq 0.$$
Henceforth, we call these products $ab$ the
\textbf{$v$-NC-products} (in $R$). Let
$$ \mfq' := \mfq \cup Y(v),$$
where $\mfq$ is the support of $v$, $\mfq = v^{-1}(0)$. As observed in
\cite[\S7]{IKR4}, $\mfq'$ is an ideal of the monoid $(R, \cdot
\;)$, while $\mfq$ is an ideal of the semiring $R$.
\begin{examp}\label{exmp:2.9}

Let $R$ be a supertropical semiring and let $\gm: eR \to M$ be a semiring
homomorphism to a bipotent semiring $M$. Then $$v:= \gm \circ
\nu_R: R \ds \To M$$ is a strict \m-valuation. The $v$-NC-products
are the products $yz$ with $y,z \in U$ such that there exists some
$y' \in R$ with $$\gm(ey') < \gm (ey), \quad \gm(ey'z) = \gm
(eyz).$$ Thus $Y(v)$ is the ideal $D_0(R,\gm)$ of the
supertropical semiring $R$, introduced in \cite[Definition~4.8]{IKR4}.

\end{examp}
\begin{prop}\label{prop:2.10} If $\vrp$ is a supervaluation then
$\vrp(\mfq')$ is contained in the ghost value set~ $G(\vrp)$.
\end{prop}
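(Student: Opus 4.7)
The plan is to handle the decomposition $\mfq' = \mfq \cup Y(v)$ case by case. For $c \in \mfq = v^{-1}(0)$ the argument is immediate: one has $e\vrp(c) = v(c) = 0$, so by the defining axiom $ex = 0 \Rightarrow x = 0$ of a supertropical monoid (Definition~\ref{defn:1.1}) it follows that $\vrp(c) = 0$, and $0 \in M$ certainly lies in $G(\vrp)$.

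The substantive case is $c = ab \in Y(v)$; fix a witness $a' \in R$ with $v(a') < v(a)$ and $v(a'b) = v(ab) \neq 0$. My plan is to exploit distributivity in $U$, which is available precisely because $\vrp$ is a supervaluation (so $U$ is a supertropical \emph{semiring}, not merely a supertropical monoid). I will evaluate the product $(\vrp(a) + \vrp(a'))\vrp(b)$ in two ways. First, since $e\vrp(a) = v(a) > v(a') = e\vrp(a')$, the supertropical addition rule of Definition~\ref{def:ssmr} gives $\vrp(a) + \vrp(a') = \vrp(a)$, so the product equals $\vrp(a)\vrp(b) = \vrp(ab)$. Second, by distributivity and multiplicativity of $\vrp$, the same expression equals $\vrp(ab) + \vrp(a'b)$; but $v(ab) = v(a'b)$ means these summands share a common ghost, and the addition rule of Definition~\ref{def:ssmr} now yields $\vrp(ab) + \vrp(a'b) = e\vrp(ab)$. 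Equating the two computations produces $\vrp(ab) = e\vrp(ab) \in eU = M$, so $\vrp(ab) \in \vrp(R) \cap M = G(\vrp)$.

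I do not foresee a genuine obstacle. The argument bypasses the subadditivity inequality $\vrp(x+y) \leq \vrp(x) + \vrp(y)$ entirely; it uses only multiplicativity of $\vrp$ together with the distributive law in $U$. The one step deserving a sentence of justification is why $\vrp(a) + \vrp(a') = \vrp(a)$ rather than $\vrp(a')$: by Definition~\ref{def:ssmr} the sum lies in $\{\vrp(a), \vrp(a')\}$ since the ghosts differ, and applying $e$ forces the sum to have ghost $v(a) + v(a') = v(a)$ (the maximum in the bipotent semiring $M$), which singles out $\vrp(a)$.
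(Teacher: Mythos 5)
Your argument is correct, but it takes a more self-contained route than the paper. The paper's proof of Proposition~\ref{prop:2.10} is essentially a citation: it invokes the fact, established in \cite[\S7]{IKR4} (and revisited in Remark~\ref{rem:2.11} via \cite[Theorem~1.2]{IKR4}), that $\vrp(Y(v))\subset M$ because each $\vrp(ab)$ with $ab\in Y(v)$ is an NC-product in the supertropical semiring $U$, and NC-products are ghost; together with $\vrp(\mfq)=\{0\}$ this gives $\vrp(\mfq')\subset \vrp(R)\cap M=G(\vrp)$. You instead reprove the underlying fact from scratch: your two-way evaluation of $(\vrp(a)+\vrp(a'))\vrp(b)$ — once via the bipotent addition rule ($e\vrp(a')<e\vrp(a)$ forces the sum to be $\vrp(a)$, giving $\vrp(ab)$) and once via distributivity (equal ghosts $v(a'b)=v(ab)$ force the sum $\vrp(ab)+\vrp(a'b)=e\vrp(ab)$) — yields $\vrp(ab)=e\vrp(ab)\in M$, which is precisely the ghostness of the NC-product $\vrp(a)\vrp(b)$ that \cite[Theorem~1.2]{IKR4} records. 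Your treatment of $\mfq$ (from $e\vrp(c)=v(c)=0$ and the axiom $ex=0\Rightarrow x=0$) is also fine. What your version buys is independence from the external reference, at the cost of silently re-deriving a known lemma; you correctly flag the only delicate points, namely that distributivity is available exactly because $\vrp$ is a supervaluation (so $U$ is a semiring, not merely a supertropical monoid), and that the sum $\vrp(a)+\vrp(a')$ is pinned down by comparing ghosts.
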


\begin{proof} We have seen in \cite[\S7]{IKR4} that $\vrp(Y(v)) \subset M$. Since $\vrp(\mfq) = \{
0\}$, this implies that $\vrp(\mfq') \subset M \cap \vrp(R) =
G(\vrp)$.
\end{proof}

\begin{rem}\label{rem:2.11} Here is a more direct argument that $\vrp(Y(v))
\subset M$, than given in the proof of
\cite[Theorem~7.12.i]{IKR4}. If $x \in Y(v)$, then we have $a',a,b
\in R$ with $x =ab$, $v(a') < v(a)$, $v(a'b) = v(ab) \neq 0$.
Clearly $\vrp(x) = \vrp (a) \vrp (b)$ is an NC-product in the
supertropical semiring~ $U$  \cite[Definition~4.2]{IKR4},
and thus $\vrp(x)$ is ghost, as  already observed in
\cite[Theorem~1.2]{IKR4}.
\end{rem}

\begin{lem}\label{lem:2.12}
Assume that $\vrp: R \to U$ is a surjective tangible
\m-supervaluation covering ~ $v$. Then $\vrp(R \sm \mfq) = \tT(U)$,
$v(R) = M$, and $\vrp(Y(v)) = S(U)$.\footnote{Recall that $S(U)$
denotes the set of tangible NC-products in $U$
\cite[Definition~4.2]{IKR4}.}
\end{lem}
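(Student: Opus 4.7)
The plan is to address the three equalities in sequence, using surjectivity and tangibility as the two main inputs, together with the identity $v = e \vrp$ which implies in particular $\vrp^{-1}(0) = v^{-1}(0) = \mfq$.

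For the first equality $\vrp(R\sm\mfq) = \tT(U)$, I would argue by double inclusion. Since $\vrp$ is tangible, $\vrp(R) \subset \tT(U) \cup \{0\}$, and since $a \notin \mfq$ forces $\vrp(a) \neq 0$, we get $\vrp(R\sm\mfq) \subset \tT(U)$. Conversely, given $x \in \tT(U)$, surjectivity gives some $a \in R$ with $\vrp(a) = x$; as $x \neq 0$, necessarily $a \notin \mfq$. For the second equality $v(R) = M$, I would simply apply the ghost map to both sides of $\vrp(R) = U$, using $v = e\vrp$ and $eU = M$.

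The heart of the lemma is the third equality $\vrp(Y(v)) = S(U)$, which should be established by unwinding the defining data on each side. For $\vrp(Y(v)) \subset S(U)$, take $x = ab \in Y(v)$ with witness $a' \in R$ satisfying $v(a') < v(a)$ and $v(a'b) = v(ab) \neq 0$; then setting $y := \vrp(a)$, $y' := \vrp(a')$, $z := \vrp(b)$, the identities $ey' = v(a') < v(a) = ey$ and $e(y'z) = v(a'b) = v(ab) = e(yz) \neq 0$ exhibit $\vrp(x) = yz$ as an NC-product in $U$, which is tangible by the first equality since $ab \notin \mfq$. For the reverse inclusion, given a tangible NC-product $z = xy \in S(U)$ with witness $x' \in U$ satisfying $ex' < ex$ and $e(x'y) = e(xy) \neq 0$, use surjectivity of $\vrp$ to pick preimages $a,b,a' \in R$ of $x,y,x'$; the same identities translated back through $v = e \vrp$ show that $ab \in Y(v)$ and $\vrp(ab) = xy = z$.

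All three arguments are essentially formal; the only subtlety worth flagging is making sure, in the $\subset$ direction of the third equality, that $\vrp(x)$ is actually tangible and not merely a ghost NC-product, which is why the first equality needs to be proved (or at least observed) beforehand. Once surjectivity is used to reduce everything to arithmetic in $R$ via $v = e \vrp$, no further structural ingredients are needed.
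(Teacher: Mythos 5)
There is a genuine gap, and it comes from the meaning of ``surjective'' for an \m-supervaluation. In this setting (see the parenthetical in Theorem \ref{thm:1.10} and the first line of the paper's proof) a surjective $\vrp:R\to U$ means $U=\vrp(R)\cup e\vrp(R)$, \emph{not} $\vrp(R)=U$. Indeed plain surjectivity is impossible here: tangibility gives $\vrp(R)\subset \tT(U)\cup\{0\}$, so $\vrp$ can never hit the nonzero ghosts of $U$. Consequently your derivation of $v(R)=M$ by ``applying the ghost map to both sides of $\vrp(R)=U$'' starts from a false identity; the repair is to apply $e$ to $U=\vrp(R)\cup e\vrp(R)$, which yields $M=eU=e\vrp(R)=v(R)$. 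Likewise, in the converse direction of the first equality you should note that a tangible $x$ cannot lie in $e\vrp(R)\subset M$, hence must lie in $\vrp(R)$; with that remark the first equality is fine (this is essentially the paper's step a), which deduces both equalities at once from $U=\tT(U)\,\dot\cup\, M$).

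The real casualty is the inclusion $S(U)\subset\vrp(Y(v))$. The NC-witness is a ghost: in the paper's formulation of \cite[Definition~4.2]{IKR4} one has $x=yz\in\tT(U)$ with $y'<ey$, $y'z=eyz\neq 0$ for some $y'\in M$, and there is no reason for $y'$ (or any element of $U$ with ghost value $y'$) to lie in $\vrp(R)$ --- a nonzero ghost is never in the image of the tangible map $\vrp$. So your step ``use surjectivity of $\vrp$ to pick preimages $a,b,a'$ of $x,y,x'$'' fails for the witness. The paper routes the witness through $v$ instead: choose $a,b$ with $\vrp(a)=y$, $\vrp(b)=z$ (legitimate since $y,z\in\tT(U)=\vrp(R\sm\mfq)$) and $a'$ with $v(a')=y'$, which is exactly where the equality $v(R)=M$ is used; then $v(a')<v(a)$, $v(a'b)=v(ab)\neq 0$, so $ab\in Y(v)$ and $x=\vrp(ab)$. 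With this correction (and the corrected reading of surjectivity) your argument coincides with the paper's; the forward inclusion $\vrp(Y(v))\subset S(U)$ as you wrote it is sound.
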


\begin{proof} a) We have $U = \vrp(R) \cup v(R)$, $\vrp(R \sm \mfq) \subset
\tT(U)$, and $v(R) \subset M$. Since $U = \tT(U) \dot \cup M$,
this forces $\vrp(R \sm \mfq) = \tT(U)$ and $v(R) = M.$ \pSkip
b) Let $c \in Y(v)$. There exist $a,b,a' \in R$ with $c =ab$,
$v(a') < v(a)$, $v(a'b) = v(ab) \neq 0.$ It follows that $\vrp(c)
= xy \neq 0$ with $x:= \vrp(a)$, $y:= \vrp(b)$, $v(a') < ex$,
$v(a') y = exy.$ Thus $\vrp(c)$ is an NC-product in $U$. Moreover
$\vrp(c)$ is tangible, hence $\vrp(c) \in S(U).$  Thus $\vrp(Y(v))
\subset S(U).$ \pSkip
c) Let $x \in S(U)$  be given. Then $x = yz \in \tT(U)$ with $y,z
\in U$ and $y' < ey$, $y'z = eyz \neq 0$ for some $y' \in M$.
Clearly $y,z\in \tT(U)$. We choose $a, b, a' \in R$ with $\vrp(a)
= y$, $\vrp(b) = z$, $v(a') = y'.$ Then $ey = v(a)$, $ez = v(b)$,
and it follows that $v(a') < v(a)$, $v(a'b) = v(ab) \neq 0$. Thus
$ab \in Y(v)$ and $x = \vrp(ab)$. This proves that $S(U) \subset
\vrp (Y(v))$.
\end{proof}

\begin{thm}\label{thm:2.13} Assume that $\vrp: R \to U$ is a
supervaluation, i.e., $U$ is a semiring. Let $\chvrp$ denote the
tangible lift of $\vrp$ outside the ideal $v(\mfq') = \{ 0 \} \cup
v(Y(v))$ of $M$,
$$ \chvrp := (\tlvrp)_{v(\mfq')} : R \ds \To \chU := \tlU/E_{v(\mfq')}$$
(cf. Construction \ref{const:2.6}).
\begin{enumerate} \eroman
    \item $\chvrp$ is again a supervaluation. More precisely,
    $\chvrp$ coincides with the supervaluation $(\tlvrp)^\wedge$
    associated to the tangible lift $\tlvrp: R \to \tlU$ of $\vrp$
    (cf. \cite[Definition~7.7]{IKR4}).

    \item If $\psi$ is an \m-supervaluation covering $v$ with $\vrp \leq \psi \leq
    \tlvrp$, then $\psi$ is a supervaluation iff $\psi \leq
    \chvrp$. Thus
    $$ [\vrp, \tlvrp] \cap \Cov(v) = [\vrp, \chvrp].$$
\end{enumerate}
\end{thm}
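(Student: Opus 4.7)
\noindent\emph{Proof plan.} My plan is to prove (i) by a direct verification that $\chU$ is a supertropical semiring and then to identify $\chvrp$ with $(\tlvrp)^\wedge$ via the universal property of the reflection from $\STROPm$ to $\STROP$, and to deduce (ii) from (i) using Theorem~\ref{thm:2.4} and Proposition~\ref{prop:2.10}. As preparation, I apply Lemma~\ref{lem:2.12} to the tangible \m-supervaluation $\tlvrp : R \to \tlU$ (which is tangibly surjective since $\tlvrp(R) \cup v(R) = \tT(\tlU)_0 \cup M = \tlU$) to obtain $S(\tlU) = \tlvrp(Y(v))$. Because $\vrp$ is a supervaluation, Proposition~\ref{prop:2.10} gives $\vrp(c) = v(c)$ for every $c \in Y(v)$, hence $\tlvrp(c) = s(v(c))$ and $S(\tlU) = s(v(Y(v)))$. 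Since $s(0) = 0$ and $v(\mfq') = \{0\} \cup v(Y(v))$, the MFCE-relation $E_{v(\mfq')}$ of Construction~\ref{const:2.6} identifies precisely each $y \in S(\tlU)$ with its ghost image $ey$.

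For part (i), I next verify that $S(\chU) = \emptyset$, so that $\chU = \tlU / E_{v(\mfq')}$ is a supertropical semiring. The tangible elements of $\chU$ form $\{s(z) : z \in \vrp(R) \sm v(\mfq')\}$. A hypothetical tangible NC-product $s(z) \cdot s(z') = s(zz')$ in $\chU$ with $zz' \notin v(\mfq')$ would, via the NC condition together with the surjectivity of $v$ and $\vrp$, supply $a, a', a'' \in R$ with $\vrp(a) = z$, $\vrp(a') = z'$, $v(a'') < v(a)$, and $v(a''a') = v(aa') \neq 0$, so $aa' \in Y(v)$. But then Remark~\ref{rem:2.11} forces $zz' = \vrp(aa') \in M$, and hence $zz' = v(aa') \in v(Y(v)) \subset v(\mfq')$, a contradiction. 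Thus $\chU$ is a supertropical semiring and $\chvrp$ is a supervaluation. The identification $\chvrp = (\tlvrp)^\wedge$ then follows because any MFCE-relation $E$ on $\tlU$ whose quotient is a semiring must identify each $y \in S(\tlU)$ with $ey$ (otherwise $y$ persists as a tangible NC-product in $\tlU/E$), giving $E_{v(\mfq')} \subset E$; the minimality of the semiring reflection forces equality.

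For part (ii), the ``if'' direction is formal: $\psi \leq \chvrp$ provides a fiber contraction $\al : \chU \onto V$ over $M$ with $\psi = \al \circ \chvrp$, and \cite[Theorem~1.6]{IKR4} transports the semiring property of $\chU$ to $V$, so $\psi$ is a supervaluation. Conversely, if $\psi$ is a supervaluation with $\vrp \leq \psi \leq \tlvrp$, then Proposition~\ref{prop:2.10} applied to $\psi$ yields $\psi(\mfq') \subset G(\psi)$; since $e\psi = v$ and $\psi(\mfq') \subset M$, we obtain $v(\mfq') = \psi(\mfq') \subset G(\psi)$, and Theorem~\ref{thm:2.4}.(i) converts this into $\chvrp \geq \psi$, i.e.\ $\psi \leq \chvrp$. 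The main technical difficulty lies in the NC-product verification in part (i); it resolves cleanly via Remark~\ref{rem:2.11} and Lemma~\ref{lem:2.12}.
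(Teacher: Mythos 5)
Your proposal is correct and follows essentially the same route as the paper: the crux in both is applying Lemma~\ref{lem:2.12} to $\tlvrp$ together with Proposition~\ref{prop:2.10} to identify $S(\tlU)\cup\{0\}$ with $s(v(\mfq'))$, and your part (ii) is exactly the paper's argument (Proposition~\ref{prop:2.10} plus Theorem~\ref{thm:2.4} in one direction, inheritance of the semiring property under fiber contractions in the other). The only deviation is cosmetic: where the paper concludes $\chvrp=(\tlvrp)^\wedge$ at once from the equality $E(\tlU,S(\tlU))=E_{v(\mfq')}$ and quotes \cite[Proposition~7.6]{IKR4} for the supervaluation property, you re-verify directly that $\chU$ has no tangible NC-products and then invoke the reflection's universal property --- a correct but slightly redundant detour.
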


\begin{proof} (i): $(\tlvrp)^\wedge$ is the map $\tlvrp/ E(\tlU, S(\tlU)) = \pi_{E(\tlU, S(\tlU))} \circ
\tlvrp$ from $R$ to $\chU:= \tlU/ E(\tlU, S(\tlU)).$ Applying Lemma
\ref{lem:2.12} to $\vrp$, we have
$$ S(\tlU) \cup \{ 0 \} = \tlvrp (\mfq') = s\vrp(\mfq')$$
where $s: \vrp(R) \to \tT(\tlU)_0$ is the tangible lifting
map for $\vrp$. Moreover $\vrp(\mfq') = v(\mfq')$ by Proposition
\ref{prop:2.10}. Thus $ \chU = \tlU/ E_{v(\mfq')}$ and
$(\tlvrp)^\wedge = \tlvrp/ E_{v(\mfq')} = \chvrp.$ \pSkip
(ii): If $\psi$ is a supervaluation, then we know by Proposition
\ref{prop:2.10} that $G(\psi) \supset v(\mfq') = G(\chvrp)$, and
hence by Theorem \ref{thm:2.4} that $\psi \leq \chvrp$. Conversely,
if $\psi \leq \chvrp$, then $\psi$ is a supervaluation, since
$\chvrp$ is a supervaluation (cf. \cite[Proposition~7.6]{IKR4}).
\end{proof}
\begin{defn}\label{defn:2.14}$ $ \begin{enumerate} \eroman
    \item Given a supervaluation $\vrp:= R \to U$ covering $v$ we
    call
    $$ \chvrp := (\tlvrp)^\wedge: R \ds \To \chU = (\tlU)^\wedge$$
the \textbf{almost tangible lift of $\vrp$} (to a supervaluation)
and we call $[\chvrp] \in \Cov(v)$ the almost tangible  lift (in
$\Cov(v)$) of the class $[\vrp] \in \Cov(v)$. \pSkip

    \item If $\chvrp = \vrp$, we say that $\vrp$  itself  is \textbf{almost tangible}.
\end{enumerate}

\end{defn}

\begin{rems}\label{rem:2.15}
$ $ \begin{enumerate} \ealph
    \item Clearly $\vrp$ is almost tangible iff $G(\vrp) =
    v(\mfq')$. A subtle point here is that there may
    nevertheless exist elements $a \in R \sm \mfq'$  with
    $\vrp(a)$ ghost.

    \item If $\vrp$ is any supervaluation, then
    $\chvrp$ is almost tangible.

    \item If $v$ happens to be a valuation, i.e., $M$ is
    cancellative, then $\chvrp = \tlvrp$.
\end{enumerate}
\end{rems}

\begin{prop}\label{prop:2.16}
If $\psi$ is an almost tangible supervaluation dominating the
supervaluation~$\vrp$ (but not necessarily covering $v$), then
$\psi$ dominates $\chvrp$.
\end{prop}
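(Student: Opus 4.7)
The plan is to invoke Corollary \ref{cor:1.12} and then descend the resulting transmission of tangible lifts through the almost tangible reflections.

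Assuming $\psi$ surjective without loss of generality, let $\al : V \onto U$ be the unique transmission with $\al \circ \psi = \vrp$, covering the semiring homomorphism $\gm := \al|_{eV} : eV \To M$; with $w := e\psi$, one has $v = \gm \circ w$. Since $\psi \geq \vrp$, Corollary \ref{cor:1.12} gives $\tlvrp \leq \tlpsi$, with the witnessing tangible transmission $\tlal : \tlV \To \tlU$ satisfying $\tlal \circ \tlpsi = \tlvrp$. Composing with the projection $\pi_{v(\mfq')} : \tlU \onto \tlU / E_{v(\mfq')} = \chU$ of Construction \ref{const:2.6} gives
\[
  \pi_{v(\mfq')} \circ \tlal \circ \tlpsi \ = \ \pi_{v(\mfq')} \circ \tlvrp \ = \ \chvrp .
\]
Since $\psi$ is almost tangible (Definition \ref{defn:2.14}(ii)), Theorem \ref{thm:2.13}(i) applied to $\psi$ yields that the analogous projection $\pi_{w(\mfp')} : \tlV \onto \tlV / E_{w(\mfp')} = V$ satisfies $\pi_{w(\mfp')} \circ \tlpsi = \psi$, where $w(\mfp') := \{0\} \cup w(Y(w))$. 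To produce a transmission $\bt : V \To \chU$ with $\bt \circ \psi = \chvrp$ it therefore suffices to show that $\pi_{v(\mfq')} \circ \tlal$ factors through $\pi_{w(\mfp')}$.

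The descent amounts to checking that $\tlal$ sends each non-trivial $E_{w(\mfp')}$-class $\{s_\psi(x), x\}$ (for $x \in w(\mfp')$) into a single $E_{v(\mfq')}$-class in $\tlU$, where $s_\psi, s_\vrp$ denote the tangible lifting maps of Definition \ref{defn:2.5} for $\psi$ and $\vrp$ respectively. By the explicit form of $\tlal$ as a tangible transmission (Theorem \ref{thm:1.8}, Proposition \ref{prop:1.3}), one has $\tlal(s_\psi(x)) = s_\vrp(\gm(x))$ and $\tlal(x) = \gm(x)$, so the compatibility reduces to the inclusion $v(\mfp') = \gm(w(\mfp')) \subset v(\mfq')$, which is the main obstacle. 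The case $c \in \mfp$ is immediate ($v(c) = 0$), and for $c = ab \in Y(w)$ with a $w$-NC witness $a'$ whose image under $\gm$ still satisfies $v(a') < v(a)$, the triple $(a, b, a')$ is a $v$-NC witness for $c$, placing $v(c) \in v(Y(v)) \subset v(\mfq')$. The delicate case $v(a') = v(a)$, where $\gm$ collapses the strict inequality, is where I would invoke the supervaluation axioms for $\vrp$ applied to $a + a' \in R$: since $v(a) = v(a')$, the element $\vrp(a + a')$ is forced to be ghost, and the resulting value $v(a + a')$ should supply either a fresh $v$-NC witness for $v(c)$ via a triple of the form $(a, b, a + a')$ or directly force $v(c) = 0$, completing the compatibility check and hence the proof.
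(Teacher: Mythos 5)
Your route is genuinely different from the paper's, which is a two-liner: $\tlpsi\geq\tlvrp$ (Theorem \ref{thm:1.10}(ii)), and then, since $\psi$ is almost tangible and $\chvrp=(\tlvrp)^\wedge$ (Theorem \ref{thm:2.13}(i)), $\psi=(\tlpsi)^\wedge\geq(\tlvrp)^\wedge=\chvrp$, the middle step being the compatibility of the associated-supervaluation operation $(\,\cdot\,)^\wedge$ with dominance from \cite[\S7]{IKR4}. Your explicit descent of $\pi_{v(\mfq')}\circ\tlal$ through $\pi_{w(\mfp')}$ is a reasonable substitute, and the reduction is essentially sound: note only that the identity $\tlal(s_\psi(x))=s_\vrp(\gm(x))$ presupposes $\gm(x)\in G(\vrp)$, which holds because for $a\in\mfp'$ the element $\psi(a)$ is ghost (Proposition \ref{prop:2.10} for $\psi$) and transmissions carry ghosts to ghosts, so $\vrp(a)=\al(\psi(a))=v(a)$. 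You correctly isolate the crux as the inclusion $\gm(w(\mfp'))\subset v(\mfq')$, i.e.\ $v(Y(w))\subset\{0\}\cup v(Y(v))$.

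The treatment of the ``delicate case'' $v(a')=v(a)$ is, however, a genuine gap, and it sits exactly where the proposition's content lies. First, the claim that $\vrp(a+a')$ is ``forced to be ghost'' because $v(a)=v(a')$ is false: a supervaluation is only multiplicative (there is no additivity axiom), so nothing relates $\vrp(a+a')$ to $\vrp(a)+\vrp(a')$; ghostness of sums with equal ghost values is a statement inside the semiring $U$, not about values of $\vrp$ at sums formed in $R$. Second, even granting ghostness, the triple $(a,b,a+a')$ need not be a $v$-NC-witness: you would need $v(a+a')<v(a)$ and $v((a+a')b)=v(ab)\neq 0$, whereas the axioms only give $v(a+a')\leq v(a)+v(a')=v(a)$ and $v((a+a')b)\leq v(ab)$, with equality, respectively strict inequality, entirely possible in the wrong direction. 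So the needed inclusion is not proved. It is true, but the clean reason is the paper's: since $\chU$ is a semiring and $\tlpsi\geq\tlvrp\geq\chvrp$, the transmission $\widetilde{V}\To\chU$ realizing $\tlpsi\geq\chvrp$ factors through the associated semiring of $\widetilde{V}$, hence collapses $S(\widetilde{V})=\tlpsi(Y(w))$ (Lemma \ref{lem:2.12}) to ghosts --- and that is precisely your inclusion. In other words, what you are missing is not bookkeeping with NC-witnesses transported through $\gm$ (that transport can collapse the strict inequality, as you saw), but the reflection/monotonicity result of \cite[\S7]{IKR4}; quoting it closes your argument, and in fact collapses it back to the paper's proof.
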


\begin{proof} $\tlpsi \geq \tlvrp$, and hence $\psi= (\tlpsi)^\wedge \geq (\tlvrp)^\wedge = \chvrp.$
\end{proof}

\section{Tangible and mixing transmissions}\label{sec:3}

The  intent  of this section is to display any transmission
$\al: U \to V$ of  supertropical monoids as a product $\al =
\al_m \circ \al_t$ of a \emph{tangible transmission} $\al_t$ and a
\emph{``mixing'' transmission} $\al_m$, and to study properties of
these  factors. Tangible transmissions were
introduced in \cite[Definition 2.1]{IKR4}, while mixing
transmissions are introduced below. A key result for this
factorization is provided by Theorem
\ref{thm:3.2} on TE-relations \cite[Definition~1.7]{IKR4}.

\begin{defn} \label{defn:3.1} A   TE-relation $E$ on a
supertropical monoid $U$ is called \textbf{ghost separating}, if no element
$x\in \tT(U)$ with $x \not \sim_E 0 $ is $E$-equivalent to an
element $y \in \tG(U)$. In other terms, $E$~is ghost separating
iff $\pi_E: U \to U/E$ is a tangible transmission \cite[Definition~2.3]{IKR4}.
\end{defn}

\begin{thm} \label{thm:3.2} Let $E$ be a TE-relation on a
supertropical monoid $U$. Define the binary relation $\tlE$
on $U$  by
%
 \begin{equation}\renewcommand{\theequation}{$*$}\addtocounter{equation}{-1}\label{eq:str.1}
 x \sim_{\tlE}y  \dss \Leftrightarrow
\left \{
\begin{array}{lll}
x \sim _E y , & \text{and for any } z \in U &\\[1mm]
  \text{either} &    xz, yz \in \tT(U) \cup \00, \\[1mm]
  \text{or} &    xz, yz \in eU, \\[1mm]
  \text{or} &   xz \sim_E 0 \ ( \text{hence } yz \sim_E 0). \\
\end{array}%
\right.
\end{equation}
\begin{enumerate} \eroman
    \item $\tlE$ is a TE-relation on $U$ with $\tlE \subset E$ and
    $\tlE | M = E|M$.
    \item $\tlE$ is ghost separating.

    \item If $F$ is a ghost separating TE-relation on $U$ with $F \subset
    E$, then $F \subset \tlE$.
\end{enumerate}

\end{thm}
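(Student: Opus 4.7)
The plan is to prove (i), (ii), (iii) in sequence, organized around a single observation: $U$ is the union of $\tT(U)$ and $eU$, disjoint apart from $0$, so that for each $x, z \in U$ the product $xz$ has a well-defined ``type''---either it lies in $\tT(U) \cup \00$ or in $eU$, with $0$ sitting in both. The defining condition of $\tlE$ then reads as: $x \sim_E y$, and for every test $z$ the types of $xz$ and $yz$ match, unless the absorbing third clause $xz \sim_E 0$ applies. Note that TE-compatibility of $E$ together with $x \sim_E y$ makes clause three symmetric in $x$ and $y$, since $xz \sim_E yz$ forces $xz \sim_E 0 \Leftrightarrow yz \sim_E 0$.

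For (i), reflexivity of $\tlE$ is immediate from the dichotomy of types, and symmetry is built into the definition. Transitivity is a case analysis: if a type mismatch arises when chaining $(x, y)$ with $(y, w)$ through $y$, the only possible common value is $yz = 0$, so $xz \sim_E yz = 0$ lands $(x, w)$ in clause three; and if either leg already falls into clause three, TE-compatibility of $E$ propagates $\sim_E 0$ along the other leg. The TE-property $x \sim_{\tlE} y \Rightarrow xu \sim_{\tlE} yu$ is read off by evaluating the defining condition at the test element $uz$ in place of $z$. The containment $\tlE \subset E$ is part of the definition, and $\tlE | M = E | M$ holds because for $x, y \in M$ and any $z$ one has $xz, yz \in M \subset eU$, so clause two always applies.

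For (ii), assume $x \in \tT(U)$ is $\tlE$-equivalent to some $y \in \tG(U)$. Evaluating the defining condition at $z = 1$ rules out clause one (as $y \notin \tT(U) \cup \00$) and clause two (as $x \notin eU$), forcing $x \sim_E 0$. The converse implication $x \sim_E 0 \Rightarrow x \sim_{\tlE} 0$ is verified directly: with $y = 0$, every test $z$ is accommodated by clause one or clause two according to whether $xz$ is tangible or ghost. Hence $x \sim_{\tlE} 0$, and $\tlE$ is ghost separating.

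For (iii), let $F \subset E$ be a ghost separating TE-relation and take $x \sim_F y$. Then $x \sim_E y$, and for each $z$ the TE-property of $F$ gives $xz \sim_F yz$. I split on the type of $xz$: if $xz \in \tT(U)$ with $xz \not\sim_F 0$, ghost separation of $F$ forces $yz \in \tT(U) \cup \00$, giving clause one; if both $xz, yz \in eU$, clause two holds; in the remaining configurations ($xz \sim_F 0$, or $xz \in eU$ with $yz \in \tT(U)$) one arrives at $xz \sim_F 0$ either directly or by a second application of ghost separation to $(yz, xz)$, and $F \subset E$ promotes this to clause three. The principal bookkeeping obstacle, shared between the transitivity in (i) and the case analysis in (iii), is the uniform treatment of clause three as a symmetric absorbing case; once the auxiliary equivalence $x \sim_E 0 \Leftrightarrow x \sim_{\tlE} 0$ is isolated at the outset, all three parts reduce to the straightforward type case analyses sketched above.
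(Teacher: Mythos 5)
Your proposal is correct and follows essentially the same route as the paper: a direct verification that $\tlE$ is a multiplicative equivalence relation with $\tlE \subset E$ and $\tlE|M = E|M$, ghost separation via the test element $z=1$, and the same case split on the kind of $xz$ (using ghost separation of $F$ and $F\subset E$) for part (iii); the paper simply declares most of (i) obvious where you spell out reflexivity, transitivity and multiplicativity. The only point you leave implicit is the remaining clause in the definition of a TE-relation, namely that $ex \sim_{\tlE} 0$ forces $x \sim_{\tlE} 0$, which the paper checks explicitly; it follows in one line from your auxiliary equivalence $x \sim_E 0 \Leftrightarrow x \sim_{\tlE} 0$ together with $E$ being a TE-relation, so nothing essential is missing.
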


\begin{proof} (i): It is obvious that $\tlE$ is a multiplicative
equivalence relation on $U$ and $\tlE | M = E|M$. Assume that $x
\in U$ and $ex \sim_\tlE 0$. Then $ex \sim_E 0$, hence $x \sim_E
0$, implying that  $x \sim_\tlE 0$. Thus $\tlE$ is a
TE-relation. Clearly $\tlE \subset E$. \pSkip
(ii): Assume that $x\in\tT(U)$, $y \in \tG(U)$, but $x \not \sim_E
0$ and $y \not \sim_E 0$.  Then, $x$ and $y$ are not
$\tlE$-equivalent, since the second condition in $(*)$ fails for $z =1$. Thus $\tlE$ is ghost
separating. \pSkip
(iii): Assume that $x,y \in U$ and $x \sim_F y$. Then $x \sim_E
y$. Given $z \in U$, since $F$ is ghost separating,
either $xz, yz \in \tT(U)$ or $xz,yz \in \tG(U)$, or $xz \in [0]_F
\subset [0]_E$ \footnote{Recall that $[0]_F$ denotes the
$F$-equivalence class of $0 \in U$.}. Thus $x \sim_\tlE y$. This
proves that $F \subset \tlE$.
\end{proof}

\begin{defn} \label{defn:3.3} $ $
\begin{enumerate} \ealph
    \item
We call $\tlE$ the \textbf{ghost separating refinement} of the
TE-relation $E$.

\item If $E$ is an MFCE-relation, we alternatively say that $\tlE$
is the \textbf{tangible refinement} of $E$, to be compatible with the definition of a tangible MFCE-relation in
\cite[Definition 2.4]{IKR4}.

\end{enumerate}
\end{defn}

In the second condition on the right  side of $(*)$, defining
 $\tlE$,  we may discard the elements~ $z$ of $eU$, since then we
have $xz,yz \in eU$ for free. We may also use the sets $\tT(U)$,
$\tG(U)$, $[0]_E$ instead of $\tT(U) \cup \00$, $eU$, $[0]_E$,
although $\tT(U)$ or $\tG(U)$ may not be closed under
multiplication. This leads to the following description of
$\tlE$, which perhaps looks more natural than $(*)$.

\begin{rem} \label{rem:3.4} Let $x,y \in U$. The following are
equivalent.
\begin{enumerate} \eroman
    \item $x \sim_{\tlE} y$.
    \item  $x \sim_{E} y$, and for every $z \in \tT(U)$ the
    elements $xz,yz$ are both contained in one of the sets $\tT(U),
    \tG(U), [0]_E$.
\end{enumerate}

\end{rem}

The description of $\tlE$ becomes much simpler if $U$ is
unfolded, i.e., $\tT(U)_0$ is closed under multiplication.
\begin{rem} \label{rem:3.5}
Assume that $E$ is a TE-relation on an unfolded supertropical
monoid $U$. Let $x,y \in U$. Then $x \sim_{\tlE} y$ iff $x \sim_E
y$ and the elements $x,y$ are both contained in one of the sets
$\tT(U)$, $\tG(U)$, $[0]_E$. Indeed, now the last property is
inherited by the pair $xz,yz$ from the pair $x,y$ for any $z \in
U$.
\end{rem}

We are ready for introducing the class of ``mixing''
transmissions. It will be convenient to use the following
catch-phrases: We say that two elements $x,y$ of $U \sm \{ 0\}$
are \textbf{of different kind}, if either $x\in \tT(U)$, $y \in
\tG(U)$, or $x\in \tG(U)$, $y \in \tT(U)$. Otherwise we say that
$x,y$ are \textbf{of the same kind}.

\begin{defn} \label{defn:3.6} Let $\al: U \to V$ be a transmission
between supertropical monoids. We say that $\al$ is
\textbf{mixing}, if $\ial(0) = \{ 0 \}$, and if for any two
different elements $x,y$ of $U \sm \00$ with $\al(x) = \al(y)$
there exists some $z\in U$ with $\al(xz) \neq 0$ (hence also
$\al(yz) \neq 0$) and $xz$, $yz$  of different kind.
\end{defn}

We start the study of this new class of transmissions with some
simple observations.
\begin{rems} \label{rem:3.7} $ $
\begin{enumerate} \eroman
    \item If $\al : U \to V$ is a transmission with $\ial(0) =
    \00$, and $E:= E(\al)$ is the equivalence relation determined  by $\al$
    ($x \sim_E y$ iff $\al(x) = \al(y)$), then $\al$ is mixing iff the
    ghost separating refinement $\tlE$ of $E$ is trivial, i.e., $x \sim_\tlE
    y$ implies $x = y$. This is obvious from the definition of
    $\tlE$ (cf. Remark \ref{rem:3.4}).

    \item If $\al : U \to V$ is mixing, then the ghost part $\al^\nu: eU \to
    eV$ of $\al $ is injective. Indeed, if $x,y \in eU$  and $\al(x) =
    \al(y)$, then $xz,yz \in eV$ for every $z \in U$. This forces
    $x=y$. Thus every surjective mixing transmission is a fiber contraction.

    \item Let  $\al_1 : U \to U_1$ and $\al_2 : U_1 \to U_2$ be
    transmissions, and assume that $\al_2$  is injective. Then  $\al_2 \circ
    \al_1$ is mixing iff $\al_1$ is mixing. Every mixing
    transmission $\al : U \to V$ is the composite $ i \circ \al_0$
    of a surjective mixing transmission $\al_0: U \to \al (U)$ and
    an inclusion map $i: \al(U) \into V$.

    \item  If $\al : U \to V$ is mixing and $j: U' \to U$ is an
    injective transmission,  then $\al \circ j: U' \to V$ is again
    mixing.
\end{enumerate}
\end{rems}

\begin{lem} \label{lem:3.8} Assume that $\al: U \to V$ and $\bt: V \to
W$ are transmissions and that $\bt \al : U \to W$ is mixing. Then
$\al$ is mixing.
\end{lem}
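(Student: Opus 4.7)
The plan is to unpack the definition of ``mixing'' (Definition \ref{defn:3.6}) on both sides and transport the required data along $\bt$. There are two things to verify about $\al$: first, that $\ial(0) = \{0\}$; second, that any pair of distinct nonzero elements with the same $\al$-image admits a separating multiplier $z$.

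For the first point, I would argue directly: if $\al(x) = 0$ then $(\bt\al)(x) = \bt(0) = 0$, so $x \in (\bt\al)^{-1}(0)$. Since $\bt\al$ is mixing, $(\bt\al)^{-1}(0) = \{0\}$, forcing $x = 0$. Hence $\ial(0) = \{0\}$.

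For the second point, take $x \neq y$ in $U \setminus \{0\}$ with $\al(x) = \al(y)$. Applying $\bt$ gives $(\bt\al)(x) = (\bt\al)(y)$, and $x,y$ are still distinct elements of $U \setminus \{0\}$. Since $\bt\al$ is mixing, there exists $z \in U$ with $(\bt\al)(xz) \neq 0$ (hence also $(\bt\al)(yz) \neq 0$) such that $xz$ and $yz$ are of different kind. The condition on ``different kind'' is a property of $xz, yz$ in $U$ itself, so it transfers verbatim to what $\al$ requires. It only remains to note that $(\bt\al)(xz) \neq 0$ implies $\al(xz) \neq 0$, since otherwise $(\bt\al)(xz) = \bt(0) = 0$; similarly for $yz$. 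Thus the same $z$ witnesses the mixing condition for $\al$.

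There is essentially no obstacle here: the proof is a one-step ``pullback along $\bt$'' argument, relying on two trivial facts: $\bt(0)=0$ (holding for any transmission) and the observation that the ``different kind'' condition in the definition of mixing refers to the intermediate supertropical monoid where the multiplications $xz, yz$ are taken, which for both $\al$ and $\bt\al$ is $U$. The only mild subtlety worth highlighting in the writeup is that we do \emph{not} need $\bt$ itself to be mixing or even tangible — we only use that $\bt$ sends $0$ to $0$, so that a nonzero image under $\bt\al$ forces a nonzero image under $\al$.
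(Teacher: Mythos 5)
Your proposal is correct and follows essentially the same route as the paper's proof: deduce $\ial(0)=\{0\}$ from $(\bt\al)^{-1}(0)=\{0\}$, then for distinct nonzero $x,y$ with $\al(x)=\al(y)$ apply the mixing property of $\bt\al$ to obtain $z$ with $(\bt\al)(xz)\neq 0$ and $xz,yz$ of different kind, noting that $\al(xz)\neq 0$ follows. The only difference is that you spell out the auxiliary observations (such as $\bt(0)=0$) a bit more explicitly than the paper does.
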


\begin{proof} Since $(\bt \al) ^{-1} (0) = \00$, also $\ial(0) =
\00$. Let $x,y$ be different elements in $U \sm \00$ with $\al(x)
= \al(y)$. Then $\bt \al(x) = \bt \al(y)$. Since $\bt\al$ is
mixing,  there exists  some $z \in U$ with $\bt \al(xz) \neq 0$,
 hence $\al(xz) \neq 0$, and $xz$, $yz$  of different kind. This
proves that $\al$ is  mixing.
\end{proof}

\begin{lem} \label{lem:3.9}
Assume that $\al: U \to V$ is a surjective transmission which is
both tangible and mixing, Then $\al$ is an isomorphism.
\end{lem}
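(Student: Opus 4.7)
The plan is to show that under the hypotheses $\al$ must be injective, and then to verify that the inverse bijection is again a transmission.

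First I would establish injectivity. Suppose for contradiction that $x,y \in U$ are distinct elements with $\al(x) = \al(y)$. Since $\al$ is mixing we have $\ial(0) = \{0\}$, so $x,y \in U \sm \{0\}$ (otherwise $x=y=0$). Apply the mixing property to produce $z \in U$ with $\al(xz) = \al(yz) \neq 0$ and $xz, yz$ of different kind; say $xz \in \tT(U)$ and $yz \in \tG(U) \sm \{0\}$. Since $\al$ is tangible, $\al(xz) \in \tT(V) \cup \{0\}$, and since $\al(xz) \neq 0$ in fact $\al(xz) \in \tT(V)$. On the other hand $yz \in eU$, and transmissions map $eU$ into $eV$, so $\al(yz) \in eV$. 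Since $\tT(V) \cap eV = \emptyset$, this contradicts $\al(xz) = \al(yz)$. Hence $\al$ is injective, and combined with surjectivity, $\al$ is bijective.

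Next I would show that $\al$ restricts to a bijection $eU \to eV$. The inclusion $\al(eU) \subset eV$ holds because $\al$ is a transmission. For the reverse, take $w \in eV$ and pick $u \in U$ with $\al(u) = w$ by surjectivity. If $u \in \tT(U)$, tangibility forces $\al(u) \in \tT(V) \cup \{0\}$, and since $w \in eV$ we would get $w = 0$, hence $\al(u) = 0$; but then $u = 0$ by $\ial(0) = \{0\}$, contradicting $u \in \tT(U)$. Thus $u \in eU$, proving $\al^{-1}(eV) \subset eU$.

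Finally, since $\al$ is bijective, its set-theoretic inverse $\al^{-1} : V \to U$ is multiplicative (the inverse of a bijective monoid homomorphism is a monoid homomorphism), and by the previous step it restricts to a bijection $eV \to eU$ whose restriction is the inverse of the semiring isomorphism $\al|_{eU} : eU \to eV$, hence itself a semiring homomorphism. Therefore $\al^{-1}$ satisfies the defining property of a transmission, and $\al$ is an isomorphism in $\STROPm$. The only potential obstacle is the initial step, but the dichotomy between tangible and ghost images forced by tangibility and mixing resolves it immediately.
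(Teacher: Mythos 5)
Your proof is correct and follows essentially the same route as the paper: injectivity is obtained exactly as there, by applying the mixing property to a hypothetical pair $x\neq y$ with $\al(x)=\al(y)$ and using tangibility to force the contradiction $\al(xz)\in\tT(V)$ while $\al(yz)\in eV$. The remaining portion of your argument merely spells out the fact, which the paper simply quotes, that a bijective transmission is an isomorphism; this added detail is sound but not a different approach.
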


\begin{proof} A bijective transmission is  an isomorphism. Thus
it suffices to verify that $\al$ is injective. Let $x,y$ be
different elements of $U$. Suppose that  $\al(x) = \al(y)$. Since
$\ial(0) = \00 $, both  $x,y$ are not zero. Since $\al$ is mixing,
we conclude that there exists some $z \in U$ with $\al (xz) \neq
0$, $\al (yz) \neq 0$, and, say $xz \in \tT(U)$, $yz \in \tG(U)$.
Since $\al$ is tangible, it follows that $\al(xz) \in \tT(V) \cup
\00$, $\al(yz) \in eV$. But  $\al(xz) = \al(yz)$, which forces
$\al(xz) =0 $, a contradiction. This proves that $\al$
is injective.
\end{proof}

We are ready for the factorization theorem, announced
at  the beginning of the section.
\begin{thm} \label{thm:3.10} Let $\al : U' \onto U$ be a surjective
transmission, $M' := eU'$, $M := eU$, and  $$\gm := \al^\nu: M'
\Onto M.$$

\begin{enumerate} \eroman
    \item
There exists a commuting triangle
\begin{equation}\label{eq:3.0}
\xymatrix{    &   & V \ar @{>}[rrd]^{\al_m} & &
 \\
U'  \ar @{>}[rrrr]_{\al} \ar @{>>}[rru]^{\al_t} & & & & U  }
\end{equation}
where $\al_t$ is  a surjective tangible transmission covering $\gm$ (in
particular $eV = M$) and $\al_m$ is a mixing transmission over $M$.

    \item
Given a factorization $\al = \al_m \al_t$ as described in (i), the
following holds. If
    $$
\xymatrix{ \al: U'  \ar @{>>}[r]^{\bt}   & W  \ar @{>}[r]^{\mu}&
U   }
$$
is a factorization of $\al$ with $\bt$ surjective and tangible and $\mu$ a fiber
contraction over $M$ (in particular $eW = M$), then there exists a
unique fiber contraction $\zt: W \to V$ over $M$ such that $\zt
\bt = \al_t$, which forces $\al_m \zt = \mu$. If in addition
$\mu$ is mixing, then~ $\zt$ is an isomorphism.  \end{enumerate} We
indicate this by the following diagram:
 $$
\xymatrix{   &   & W \ar @{>}[rrdd]^{\mu} \ar @{.>}[d]^{\zt}& &
\\ &   & V \ar @{>}[rrd]_{\al_m} & &
 \\
U'  \ar @{>}[rrrr]_{\al} \ar @{>>}[rru]_{\al_t} \ar
@{>>}[rruu]^{\bt} & & & & U . }
$$
\end{thm}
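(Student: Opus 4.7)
For part (i), I consider the equivalence relation $E := E(\al)$ on $U'$ given by $x \sim_E y \iff \al(x) = \al(y)$; using the standing assumption $\al^{-1}(0) = \00$ one checks $E$ is a TE-relation. Apply \thmref{thm:3.2} to produce the ghost-separating refinement $\tlE \subset E$, and set $V := U'/\tlE$, $\al_t := \pi_{\tlE}: U' \onto V$. Since $\tlE \subset E$, the map $\al$ descends uniquely through $\al_t$ to a transmission $\al_m: V \to U$ with $\al_m([x]_{\tlE}) = \al(x)$, giving $\al = \al_m \circ \al_t$. Because $\tlE | M' = E | M'$ by \thmref{thm:3.2}(i), and $E|M'$ is precisely the fibre relation of $\gm: M' \onto M$, one identifies $eV$ with $M$; hence $\al_t$ covers $\gm$ and $\al_m$ is over $M$. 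Tangibility of $\al_t$ is exactly the ghost-separation of $\tlE$, by \ref{defn:3.1}.

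To see $\al_m$ is mixing, take distinct nonzero $\bar x, \bar y \in V$ with $\al_m(\bar x) = \al_m(\bar y)$ and lift them to $x, y \in U'$. Then $x \sim_E y$ but $x \not\sim_{\tlE} y$, so by the defining condition $(*)$ of $\tlE$ there exists $z \in U'$ with $xz, yz$ of different kind in $U'$ and $xz \not\sim_E 0$ (equivalently $\al(xz) = \al(yz) \neq 0$). Setting $\bar z := \al_t(z)$, I then verify that $\bar x \bar z, \bar y \bar z$ remain of different kind in $V$: $\al_t$ is a transmission, so ghost-ness is preserved, and the tangibility of $\al_t$ together with $\al_m(\bar x \bar z) = \al(xz) \neq 0$ forces the tangible one of $xz, yz$ to map to a nonzero tangible element of $V$. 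This $\bar z$ is the required witness.

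For part (ii), given a factorization $\al = \mu \bt$ as stated, the relation $E(\bt) \subset E(\al) = E$ is ghost-separating (since $\bt$ is tangible), so by \thmref{thm:3.2}(iii) one has $E(\bt) \subset \tlE = E(\al_t)$. Consequently $\al_t$ factors uniquely through $\bt$, producing a unique transmission $\zt: W \to V$ with $\zt \bt = \al_t$. The identity $\al_m \zt = \mu$ follows by cancelling the surjection $\bt$ in $\al_m \zt \bt = \al = \mu \bt$, and $\zt$ restricts to the identity on $M$ because both $\bt | M'$ and $\al_t | M'$ equal $\gm$. Surjectivity of $\zt$ is inherited from $\al_t = \zt \bt$. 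If $\mu$ is mixing, \lemref{lem:3.8} applied to $\mu = \al_m \circ \zt$ gives that $\zt$ is mixing; $\zt$ is also tangible, since for $w = \bt(x) \in \tT(W)$ one has $x \in \tT(U')$ (because $\bt$ sends $eU'$ into $eW$), whence $\zt(w) = \al_t(x) \in \tT(V) \cup \00$. \lemref{lem:3.9} then upgrades $\zt$ to an isomorphism.

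The main obstacle lies in the mixing verification for part (i): correctly negating the compound definition $(*)$ of $\tlE$ to produce a usable $z \in U'$, and then tracking how the tangible-versus-ghost kinds of $xz, yz$ descend to the product classes $\bar x \bar z, \bar y \bar z$ in $V$. In particular, one must ensure both products are nonzero in $V$, a fact which ultimately rests on the standing hypothesis $\al^{-1}(0) = \00$.
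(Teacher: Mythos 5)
Your proposal follows the paper's own proof essentially verbatim: for (i) you take $V := U'/\tlE$ with $\al_t = \pi_{\tlE}$ for $E = E(\al)$ and verify mixing of $\al_m$ by negating the defining condition $(*)$ (Remark~\ref{rem:3.4}) and using ghost separation of $\tlE$ to see that kinds are preserved, and for (ii) you use $E(\bt)\subset\tlE$, cancellation of the surjection $\bt$, and Lemmas~\ref{lem:3.8} and~\ref{lem:3.9}, exactly as in the paper. The only point left implicit is that (ii) is asserted for an arbitrary factorization as in (i), not just the one you construct; the paper dispatches this with a one-sentence remark that it follows from the case you treat, so this is a cosmetic rather than a substantive omission.
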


\begin{proof} a) Let $E := E(\al)$, and take
$$ \al_t := \pi_\tlE: U' \ds \To U' /\tlE = : V .$$
The transmission $\al_t$ is tangible and covers $\gm$ (cf. Theorem
\ref{thm:3.2}). Since $\tlE \subset E$, there is a unique map
 $\al_m: V \to U$ with $\al = \al_m \al_t$. Since $\al$ and
 $\al_t$ are surjective transmissions, also~$\al_m$ is a
 surjective transmission. Since both $\al$ and $\al_t$ cover
 $\gm$, we have $\al^\nu_m = \id_M$. Thus~ $\al_m$ is a fiber
 contraction over $M$.

 We verify that $\al_m$ is mixing. Let $x,y \in U'$ be given. Let
 $\brx := [x]_\tlE$, $\bry := [y]_\tlE$ and assume that $\brx \neq
 \bry$, i.e., $x \not \sim_\tlE
 y$. Assume also  that $\al_m(\brx) =  \al_m(\bry)$, i.e.,
 $x \sim_E  y$, in other terms, $\al(x) = \al(y)$.
 By Remark \ref{rem:3.4} we conclude that there exists some $z\in
 U$ with $\al(xz) \neq 0$, $\al(yz) \neq 0$, and $xz, yz$  of
 different kind. Then,  with $\brz := [z]_\tlE$, we have $\al_m(\brx \brz) \neq
 0$, $\al_m(\bry \brz) \neq
 0$, and   $\brx \brz, \bry \brz$ of
 different kind,  since $\tlE$ is ghost separating. This proves that
 $ \al_m$ is mixing.
\pSkip
b) We continue with the factorization $\al = \al _m \al_t$ as just
constructed. Let $\al = \mu \bt$ be a factorization of $\al$ with
$\bt: U' \to W$ tangible and surjective, and $\mu: W \to U$ a
fiber contraction over~ $M$. Since $\mu ^\nu = \id_M$, this forces
$\bt^\nu = \gm$. We have $E(\bt) \subset E(\al) = E$, and $E(\bt)$
is ghost separating. Thus $E(\bt) \subset  \tlE$. We conclude that
there exists a surjective transmission $\zt: W \to U' / \tlE = V$
with $\zt \bt = \al_t$. Since $\bt^\nu = \al_t^\nu = \gm$, it
follows that $\zt^\nu = \id_M$, i.e., $\zt$ is a fiber contraction
over $M$, and thus  $\al_m \al_t = \al_m \zt \bt = \mu \bt$. Since
$\bt$ is surjective, this implies $\mu = \al_m \zt$.
\pSkip
c) Assume in addition that $\mu$ is mixing. Since  $\al_t = \zt
\bt$ and $\bt$ are tangible, also $\zt$ is tangible. Since  $ \mu
= \al_m \zt $ is mixing, if follows by Lemma \ref{lem:3.8} that
$\zt$ is mixing, and then by Lemma \ref{lem:3.9} that $\zt$ is an
isomorphism.
 \pSkip
 d)
It is now evident that the
 statement (ii) of the theorem holds for any
 factorization $\al = \al_m \al_t$ as described in part (i), and not
 only for the one constructed in step a) of the  proof.
\end{proof}

\begin{rem}\label{rem:3.11}
Assume that in Diagram \eqref{eq:3.0} the monoid $U'$ is a (supertropical) semiring. Then also $V$ is a semiring, cf. \cite[Theorem 1.6.(ii)]{IKR4}, and consequently $U$ is a semiring. Thus the whole Diagram \eqref{eq:3.0} is situated in the category of supertropical semirings. In particular the mixing property of $\al_m$ takes place in this category.

\end{rem}
Theorem \ref{thm:3.10} can be readily generalized to a
factorization theorem of a transmission which is not necessarily
surjective. 
\begin{cor} \label{cor:3.11}
Assume that $\al: U' \to U$ is any transmission. Let $\gm : =
\al^\nu : M' \to M$ denote the ghost part of $\al$.
\begin{enumerate} \eroman
    \item There exists a surjective tangible transmission $\al_t: U' \to
    V$ and a mixing transmission $\al_m: V \to
    U$  covering the injection $i : \gm(M) \into M'$ such
    that $\al = \al_m \circ \al_t$.

    \item Given a second factorization $\al = \mu \circ \bt$ with $\bt: U' \to
    W$  surjective and tangible, and
    $\mu: W \to U$ mixing, there exists  a unique isomorphism  $\zt: W \isoto V$ such that $\zt \bt = \al_t$ and  $\al_m \zt  = \mu.$
\end{enumerate}
\end{cor}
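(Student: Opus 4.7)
\medskip
\noindent\textbf{Proof plan.}
The plan is to reduce to the surjective case already handled by Theorem~\ref{thm:3.10} by first factoring $\al$ through its image. Let $\brU := \al(U')$, with inclusion $j:\brU \into U$. Since $\al$ is multiplicative with $\al(0)=0$ and $\al^\nu = \gm$, the subset $\brU$ is closed under multiplication, contains $0$ and $1$, and its ghost part $e\brU = \gm(M')$ is a sub-bipotent-semiring of $M$ (as $\gm$ is a semiring homomorphism). Hence $\brU$ inherits the structure of a supertropical monoid, and $\al$ factors as
\[
\al \;=\; j \circ \al_0, \qquad \al_0: U' \onto \brU,
\]
where $\al_0$ is the surjective transmission obtained from $\al$ by restricting the codomain, and $j$ is an injective transmission covering the inclusion $i:\gm(M') \into M$.

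First I apply Theorem~\ref{thm:3.10}.(i) to the surjective transmission $\al_0: U' \onto \brU$, obtaining a commuting triangle
\[
\al_0 \;=\; \al_{0,m}\circ \al_t,
\]
with $\al_t: U' \onto V$ surjective and tangible, and $\al_{0,m}: V \onto \brU$ mixing, covering the identity of $\gm(M') = e V = e\brU$. Setting $\al_m := j \circ \al_{0,m}: V \to U$, we obtain $\al = \al_m \circ \al_t$. By Remark~\ref{rem:3.7}.(iii), precomposing a surjective mixing transmission with an inclusion yields a mixing transmission, so $\al_m$ is mixing; its ghost part is $i \circ \id_{\gm(M')} = i$, establishing~(i).

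For~(ii), suppose $\al = \mu\circ \bt$ with $\bt: U' \to W$ surjective and tangible and $\mu: W \to U$ mixing. Again by Remark~\ref{rem:3.7}.(iii) we may write $\mu = j'\circ \mu_0$ with $\mu_0: W \onto \mu(W)$ surjective mixing and $j'$ the inclusion. Since $\mu(W) = \mu(\bt(U')) = \al(U') = \brU$, we have $j' = j$, and therefore
\[
\al_0 \;=\; \mu_0 \circ \bt
\]
is a factorization of the surjective transmission $\al_0$ of the type considered in Theorem~\ref{thm:3.10}.(ii). Its uniqueness clause, together with Lemmas~\ref{lem:3.8} and~\ref{lem:3.9}, yields a unique isomorphism $\zt: W \isoTo V$ with $\zt\circ \bt = \al_t$ and $\al_{0,m}\circ \zt = \mu_0$. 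Composing the latter with $j$ gives $\al_m\circ \zt = j\circ \mu_0 = \mu$, proving~(ii).

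The only non-routine point is verifying that $\brU = \al(U')$ carries a well-defined supertropical monoid structure making $j$ a transmission (rather than merely a multiplicative map); this follows because $\gm$ is a semiring homomorphism, so $e\brU$ is a sub-bipotent-semiring of $M$ whose induced order agrees with the restriction of the order on $M$. Once this is in place, the whole argument is a straightforward bookkeeping wrapper around Theorem~\ref{thm:3.10} and Remark~\ref{rem:3.7}.(iii).
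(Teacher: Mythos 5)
Your proof is correct and follows essentially the same route as the paper: restrict $\al$ to its image to get a surjective transmission $\al_0$, apply Theorem~\ref{thm:3.10} to $\al_0$, and compose its mixing part with the inclusion $j:\al(U')\into U$, using Remark~\ref{rem:3.7} for the mixing property of $\al_m$ and of the restricted $\mu_0$. Your part~(ii) is in fact slightly more explicit than the paper's (which does not spell out that $\mu(W)=\al(U')$ before invoking Theorem~\ref{thm:3.10}.(ii)), so there is nothing to fix.
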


\begin{proof} (i): Let $\al_0: U' \to \al(U')$ denote the transmission
obtained from $\al$ by restricting  the target $U$ to $\al(U')$.
We have a factorization $\al_0 = (\al_0)_m \circ (\al_0)_t$ with
the properties stated in Theorem \ref{thm:3.10}.
Let $\al_t :=
(\al_0)_t$ and $\al_m := j \circ (\al_0)_m$ with $j$ the inclusion
map from $\al(U')$ to $U$. Then $\al = \al_m \circ \al_t$. Here $\al_t$ is surjective and tangible, while $\al_m$ is mixing, as follows form Remak \ref{rem:3.7}.(iv).
\pSkip
(ii): By Theorem \ref{thm:3.10}.(ii) there is a unique isomorphism $\zt: W \to \al_0(U)$ with $\zt \bt = (\al_0)_t = \al_t$. Since $\al = \al_m \al_t = \al_m \zt \bt = \mu \bt $, and $\bt$ is surjective,
this forces $\al_m \zt = \mu$.
\end{proof}

\begin{defn} \label{defn:3.12} We call $\al_t$ ``the'' \textbf{surjective tangible
part} and $\al_m$ the \textbf{mixing part} of the transmission
$\al$. $\al  = \al_m \circ \al_t$ is called the
\textbf{$(\t,\m)$-factorization} of $\al.$ When  $\al$ is surjective, we
call $\al_t$ briefly the \textbf{tangible part} of $\al$.
\end{defn}

\begin{remark}\label{rmk:3.13} Let $\al : U \to V$ be any
transmission with ghost part $\gm : = \al^{\nu}: M \to N$. By
\cite[Theorem 1.13]{IKR4} we have a unique factorization
$$ \al = \bt \circ \pi_{F(U,\gm)}, $$
where  $\bt$ is a fiber contraction over $N = eV$. Since
$\pi_{F(U,\gm)}$ is tangible and surjective, it is obvious that
$$ \al_t = \bt_t \circ \pi_{F(U,\gm)}, \qquad \al_m = \bt_m.$$
\end{remark}

The remark indicates that finding $(\t,\m)$-factorizations is
essentially an issue on fiber contractions, and thus can be turned
into a problem about MFCE-relations.

From part a) of the proof of Theorem \ref{thm:3.10} now the
following is clear.

\begin{schol}\label{schol:3.14}
Let $E$ be an MFCE-relation on a supertropical monoid $U$. Then
the transmission $\pi_E: U \to U/ E$ has the tangible part
$(\pi_E)_t = \pi_{\tlE}$ and the mixing part $(\pi_E)_m = \pi_{E /
\tlE}$. Here $E / \tlE$ denotes the equivalence relation induced
by $E$ on $U / \tlE$, i.e., for~ $x,y \in U$
$$[x]_\tlE \sim _{E / \tlE} [y]_\tlE \dss \Leftrightarrow x \sim_E y.  $$

\end{schol}

\begin{examp}[The relation $\tlE(\nu)$]\label{exmp:3.15} Let $U$ be
a supertropical monoid and  $M :=  eU$. For any subset $L$ of $U$
and $x \in U$ we define
$$[L:x] := \{ z\in U \ds | zx \in L \}.  $$
The equivalence relation $E(\nu) := E(\nu_U)$ induced by the ghost
map $\nu_U$  clearly is the coarsest MFCE-relation on $U$. Notice
that $x \sim _{E(\nu)} y$ iff $ex = ey$. Theorem \ref{thm:3.2}
tells us that the  relation $\tlE(\nu) := (E(\nu))^\sim$  is the
coarsest  ghost separating MFCE-relation  on $U$.
Furthermore,  by that theorem two elements $x,y$ of $U$ are
$\tlE(\nu)$-equivalent iff $ex =ey$ and for every $z\in U$ both
$xz$ and $yz$ lie in $\tT(U)_0$ or in $\tG(U)_0$. This can also be
phrased as follows.
$$\begin{array}{rcll}
  x \sim_{\tlE(\nu)} y & \Leftrightarrow & ex = ey, & [M:x ] =
  [M:y]\\[1mm]
   & \Leftrightarrow & ex = ey, & [\tG(U):x ] = [\tG(U):y]\\[1mm]
   & \Leftrightarrow & ex = ey, & [\tT(U):x ] = [\tT(U):y].\\
\end{array}$$
\end{examp}

\begin{remark}\label{rem:3.16}
If $E$ is any  MFCE-relation,  we read off from the preceding
remark and Theorem \ref{thm:3.2} that $\tlE = E \cap \tlE(\nu)$.
Since any equivalence relation finer than $\tlE(\nu)$ is ghost
separating, it is  clear   form Remark \ref{rmk:3.13}
that $E \cap \tlE(\nu)$ is the coarsest ghost separating
MFCE-relation finer than $E$.
\end{remark}

\begin{examp}[The relation $E_t$]\label{exmp:3.17} We return to the
fiber conserving  equivalence relation  $E_t := E_{t,U}$ mentioned
near   \cite[Definition 2.3]{IKR4}. For $x,y \in U$
$$\begin{array}{rcll}
  x \sim_{E_t} y & \Leftrightarrow & ex = ey, &  \text{and either } x= y \text{ or  } x,y \in \tT(U).
\end{array}$$
Clearly $\tlE(\nu) \subset E_t$. If $E_t$ is multiplicative,  then
$E_t$ is MFCE and ghost separating, hence $E_t \subset \tlE(\nu).$
We conclude that $\tlE(\nu) = E_t$ iff for any elements $x,y,z \in
\tT(U)$ with $ex= ey$ the elements $xz, yz$ are both tangible or
equal. Certainly this happens if $\tT(U) \cup \00$  is closed
under multiplication, but there are other cases.
\end{examp}

As in \cite[\S4]{IKR4} (and for $U$ a semiring ordering in
\cite[\S7]{IKR1}) we denote the poset consisting of all
MFCE-relations on $U$ by $\MFC(U).$ Recall that $\MFC(U)$ is a
complete lattice with top element $E(\nu_U)$ and bottom element
$\diag (U)$, cf. \cite[\S7]{IKR1}. We take a  brief look at the
sets of those $E \in \MFC(U)$ for which the associated fiber
contraction $\pi_E: U \to U/E$ is either tangible or mixing.

\begin{defn}\label{def:3.18} $ $
\begin{enumerate} \ealph
    \item Let $E \in \MFC(U)$. We say that $E$ is
    \textbf{tangible} (resp. \textbf{mixing}) if the transmission~
    $\pi_E$ is tangible (resp. mixing).

    \item We denote the set of all tangible $E \in \MFC(U)$ by
    $\MFCt(U)$,  and the set of all mixing $E \in \MFC(U)$  by
    $\Mix(U).$ We regard both $\MFCt(U)$ and $\Mix(U)$ as
    subposets of the lattice $\MFC(U).$
\end{enumerate}
\end{defn}
N.B. An MFCE-relation $E$ on $U$ is tangible iff the TE-relation
$E$ is ghost separating. Tangible MFCE-relations had already been
introduced in \cite[Definition 2.4]{IKR4}.

\begin{schol}\label{schol:3.19} It follows from Remark
\ref{rem:3.16}, that $\MFCt(U)$ is the set of all $E \in \MFC(U)$
with $E \subset \tlE(\nu)$, while $\Mix(U)$ is the set of all $E
\subset \MFC(U)$ with $ E \cap \tlE(\nu) = \{ \diag U \} $. Thus
both $\MFC_t(U)$ and $\Mix(U)$ are lower subsets of $\MFC(U)$, and
$$ \MFCt(U) \cap \Mix(U) = \{ \diag U \}.$$
Moreover,  $\MFCt(U)$ is a complete sublattice  of $\MFC(U).$
\end{schol}

\begin{lem}\label{lem:3.20}
Assume that $(E_i \ds | i \in I)$ is a family of MFCE-relations on
$U$ with the property that for any two indices $i,j \in I$ there
exists some $k \in I$ with $E_i \subset E_k $ and $E_j \subset E_k
$. Let $E:= \bigcup_{i \in I} E_i \subset U \times U. $
\begin{enumerate} \eroman
    \item $E$ is again an MFCE-relation on $U$. Thus
    $$ \bigcup_{i \in I} E_i =  \bigvee_{i \in I} E_i \ .$$

    \item For any $L \in \MFC(U)$ we have
    $$  \bigg( \bigvee_{i \in I} E_i \bigg) \wedge L = \bigvee_{i \in I} (E_i \wedge L ) =
    \bigcup_{i \in I} (E_i \cap  L) \ .  $$

    \item In particular $(L  = \tlE(\nu))$
    $$ \bigg(\bigvee_{i \in I} E_i \bigg)^{\sim} = \bigvee_{i \in I} \tlE_i = \bigcup_{i \in I} \tlE_i \ .$$
\end{enumerate}
\end{lem}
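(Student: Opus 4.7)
The plan is to prove (i) directly from the axioms of an MFCE-relation, and then deduce (ii) and (iii) as formal consequences, invoking Remark \ref{rem:3.16} for part (iii).

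For part (i), let me verify that $E := \bigcup_{i\in I} E_i$ satisfies each requirement of an MFCE-relation on $U$. Reflexivity, symmetry, multiplicativity (compatibility with the monoid operation), and fiber-conservation (i.e., $x \sim_E y \Rightarrow ex = ey$) each transfer from any individual $E_i$ to the union without requiring any hypothesis on the family. The only nontrivial point is transitivity. Suppose $x \sim_E y$ and $y \sim_E z$. Then there exist indices $i,j \in I$ with $x \sim_{E_i} y$ and $y \sim_{E_j} z$. By the directedness hypothesis, choose $k \in I$ with $E_i \subset E_k$ and $E_j \subset E_k$. Then both pairs lie in $E_k$, so transitivity of the single equivalence relation $E_k$ yields $x \sim_{E_k} z$, hence $x \sim_E z$. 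Once $E$ is shown to be an MFCE-relation, the identity $E = \bigvee_{i\in I} E_i$ is automatic: every $E_i$ is contained in $E$, and any $F \in \MFC(U)$ containing each $E_i$ must contain their union.

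For part (ii), first note that in $\MFC(U)$ the meet of two MFCE-relations is their set-theoretic intersection, since an intersection of equivalence relations is again an equivalence relation, and multiplicativity and fiber-conservation are preserved by intersection. Next, the family $(E_i \cap L \ds|\ i \in I)$ is again directed with the \emph{same} witnesses $k$ as in the hypothesis for $(E_i)$, because $E_i \subset E_k$ implies $E_i \cap L \subset E_k \cap L$. Hence part (i) applies and gives
$$ \bigvee_{i \in I} (E_i \wedge L) \;=\; \bigcup_{i\in I}(E_i \cap L).$$
The remaining set equality $(\bigcup_i E_i) \cap L = \bigcup_i (E_i \cap L)$ is the elementary distributivity of intersection over union. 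Combined with $\bigvee_i E_i = \bigcup_i E_i$ from (i), this establishes both equalities in (ii).

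For part (iii), apply part (ii) with $L := \tlE(\nu)$. By Remark \ref{rem:3.16}, for every $F \in \MFC(U)$ one has $\tlF = F \cap \tlE(\nu) = F \wedge \tlE(\nu)$. In particular $\tlE_i = E_i \wedge \tlE(\nu)$, and also
$$ \Bigl(\bigvee_{i \in I} E_i\Bigr)^{\sim} \;=\; \Bigl(\bigvee_{i \in I} E_i\Bigr) \wedge \tlE(\nu),$$
so the conclusion is immediate from (ii). The only real subtlety in the whole lemma is the transitivity verification in part (i), where the directedness hypothesis is essential; everything else is a bookkeeping exercise in the lattice $\MFC(U)$.
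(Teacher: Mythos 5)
Your proof is correct and follows essentially the same route as the paper: the paper's proof of (i) is just the terse remark ``an easy check,'' of which your directedness-based transitivity verification is the intended expansion, and your parts (ii) and (iii) are exactly the paper's steps (apply (i) to the family $(E_i \cap L \ds | i \in I)$, then specialize $L = \tlE(\nu)$ via Remark \ref{rem:3.16}). No gaps; the only addition you make explicit -- that meets in $\MFC(U)$ are set-theoretic intersections -- is correctly justified and implicitly used by the paper as well.
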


\begin{proof} (i): An easy check  starting from the definition of
an MFCE-relation. (First verify that~ $E$ is an equivalence
relation.) \pSkip
(ii): Apply (i) to the family ($E_i \cap L \ds | i \in I$). \pSkip
(iii): Now clear, since for any $F \in \MFC(U)$ we have $\tlF = F
\cap \tlE(\nu)$ (cf. Remark \ref{rem:3.16}).
\end{proof}

\begin{thm}\label{thm:3.21} For every $E \in \Mix(U)$ there exists
a maximal element $F$ of $\Mix(U)$ with~ $E \subset F$.
\end{thm}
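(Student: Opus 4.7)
The plan is to invoke Zorn's lemma on the poset
$$ \mathcal{P}_E := \{ F \in \Mix(U) \ds | E \subset F \} $$
ordered by inclusion, which is nonempty since $E \in \mathcal{P}_E$. To verify the hypothesis of Zorn, I will show that every nonempty chain $(F_i \ds | i \in I)$ in $\mathcal{P}_E$ has an upper bound in $\mathcal{P}_E$, namely $F := \bigcup_{i \in I} F_i \subset U \times U$.

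First, since the family $(F_i)$ is totally ordered, it trivially satisfies the directedness hypothesis of Lemma \ref{lem:3.20}: for any $i,j \in I$ we have $F_i \subset F_j$ or $F_j \subset F_i$. Hence by Lemma \ref{lem:3.20}.(i), $F$ is an MFCE-relation on $U$ and in fact equals $\bigvee_{i \in I} F_i$ in the lattice $\MFC(U)$. Clearly $E \subset F_i \subset F$ for every $i$, so $F$ contains $E$.

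Next I must check that $F$ is mixing, i.e., $F \in \Mix(U)$. By Scholium~\ref{schol:3.19}, this amounts to showing $F \cap \tlE(\nu) = \diag U$. Applying Lemma \ref{lem:3.20}.(ii) with $L = \tlE(\nu)$,
$$ F \cap \tlE(\nu) = \bigg( \bigvee_{i \in I} F_i \bigg) \wedge \tlE(\nu) = \bigcup_{i \in I} \big( F_i \cap \tlE(\nu) \big). $$
For each $i \in I$, the relation $F_i$ is mixing, so by Scholium~\ref{schol:3.19} we have $F_i \cap \tlE(\nu) = \diag U$. Therefore the right-hand side equals $\diag U$, showing that $F$ is indeed mixing.

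Thus $F \in \mathcal{P}_E$ is an upper bound for the chain, and Zorn's lemma produces a maximal element of $\mathcal{P}_E$ containing $E$. I do not anticipate a real obstacle here, since all the genuine work was already carried out in Lemma \ref{lem:3.20}; the only subtlety is that ``mixing'' is characterized as a meet-condition against $\tlE(\nu)$ (Scholium \ref{schol:3.19}), which is precisely what allows Lemma \ref{lem:3.20}.(ii) to transport the mixing property from the chain to its supremum.
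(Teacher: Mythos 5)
Your proof is correct and follows essentially the same route as the paper: the paper also takes the union of a chain, invokes Lemma~\ref{lem:3.20} (its part (iii), which is just your application of part (ii) with $L=\tlE(\nu)$ combined with Remark~\ref{rem:3.16}/Scholium~\ref{schol:3.19}) to see the union is mixing, and concludes by Zorn's lemma.
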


\begin{proof} Let $(E'_i \ds | i \in I)$ be a
chain in the poset $\Mix(U)$ with $E \subset E'_i$ for all $i \in
I$. Then $E' := \bigcup_i E'_i$ is a mixing MFCE-relation on $U$
by Lemma \ref{lem:3.20}.(iii).  The claim of the theorem follows by
Zorn's lemma.
\end{proof}

 $\Mix(U)$ will be a sublattice of $\MFC(U)$ only in  rather
 degenerate cases.

\section{The $(t,m)$-factorization of the composite of two transmissions}\label{sec:4}

Given two transmissions  $\al:U \to V$ and $\bt:V \to W $ with $(t,m)$-factorizations $\al = \al_m \circ \al_t$,
$\bt = \bt_m \circ \bt_t$, we intend to determine ``the'' $(t,m)$-factorization of the composite $\bt \circ \al : U \to W$.

First, we have the obvious fact that the composite of two tangible transmissions is again tangible. This is also the case for mixing transmissions.

\begin{prop}\label{prop:4.1} If $\al:U \to V$ and $\bt:V \to W $  are mixing transmissions, then  $\bt \circ \al : U \to W$ is mixing as well.
\end{prop}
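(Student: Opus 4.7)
The plan is to verify the two defining conditions of a mixing transmission (Definition~\ref{defn:3.6}) for $\bt \circ \al$. The first, that $(\bt\al)^{-1}(0)=\{0\}$, is immediate: $(\bt\al)^{-1}(0)=\al^{-1}(\bt^{-1}(0))=\al^{-1}(\{0\})=\{0\}$, using that both $\al$ and $\bt$ have trivial zero kernels.

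For the separation condition, I take distinct $x,y \in U\setminus\{0\}$ with $\bt\al(x)=\bt\al(y)$ and seek $z\in U$ with $\bt\al(xz)\neq 0$ and $xz,yz$ of different kind in $U$. I would split into two cases. If $\al(x)=\al(y)$, then mixing of $\al$ directly yields $z\in U$ with $\al(xz)\neq 0$ and $xz,yz$ of different kind in $U$; since $\bt^{-1}(0)=\{0\}$, this also gives $\bt\al(xz)\neq 0$, finishing this case. If $\al(x)\neq\al(y)$, then $\al(x),\al(y)\in V\setminus\{0\}$ are distinct elements with the same $\bt$-image, and mixing of $\bt$ produces $w\in V$ with $\bt(\al(x)w)\neq 0$ and $\al(x)w,\al(y)w$ of different kind in $V$.

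Before combining the two cases, I would reduce to the situation where $\al$ is surjective by factoring $\al = j\circ\al_0$ with $\al_0:U\onto\al(U)$ surjective and $j:\al(U)\into V$ the inclusion; by Remark~\ref{rem:3.7}(iv), $\bt\circ j$ remains mixing, and by Remark~\ref{rem:3.7}(iii) it suffices to handle the composite $(\bt\circ j)\circ\al_0$. With $\al$ surjective, pick $z\in U$ with $\al(z)=w$; then $\al(xz)=\al(x)w$ and $\al(yz)=\al(y)w$ are of different kind in $V$, and because $\al$ is a fiber contraction (Remark~\ref{rem:3.7}(ii)) with $\al(eU)\subseteq eV$, tangibility of $\al(xz)$ in $V$ forces $xz\in\tT(U)$.

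The main obstacle I expect is to promote the $V$-side kind-separation to genuine $U$-side kind-separation: a transmission may send a tangible of $U$ to a ghost of $V$, so $\al(yz)\in eV$ does not automatically yield $yz\in eU$. To close this gap I would exploit mixing of $\al$ once more, applied to the pair $(yz,\,\nu(yz))$ in $U$, which share the same $\al$-image but differ in kind, in order to produce an auxiliary $z'\in U$ and combine $z,z'$ into a single witness for the original pair $(x,y)$. An alternative, perhaps cleaner, route is to pass through the characterization of mixing via the ghost separating refinement (Remark~\ref{rem:3.7}(i) and Scholium~\ref{schol:3.14}): one rephrases the claim as $\tlE(\bt\al)=\diag$ on $U$ and deduces it from the given $\tlE(\al)=\diag$ on $U$ and $\tlE(\bt)=\diag$ on $V$, using injectivity of $(\bt\circ\al)^\nu$ on $eU$ (which follows from the injectivity of $\al^\nu$ and $\bt^\nu$) to rule out the ghost-ghost case, and mixing of $\al$ to rule out the remaining tangible-tangible case.
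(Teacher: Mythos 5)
Your write-up follows the paper's own route up to the decisive point: the trivial zero kernel, the reduction to surjective $\al$ via $\al = j\circ \al_0$, the split into the cases $\al(x)=\al(y)$ and $\al(x)\neq\al(y)$, and the correct observation that tangibility of $\al(xz)$ forces $xz\in\tT(U)$. But the proof is not finished: everything hinges on the step you yourself flag as the ``main obstacle'', namely upgrading the different-kind statement for $\al(xz),\al(yz)$ in $V$ to one for $xz,yz$ in $U$, and neither of your two proposed repairs actually closes it. Applying mixing of $\al$ to the pair $(yz,\,e\cdot yz)$ only yields some $w$ with $yzw\in\tT(U)$ and gives no control over $xzw$, hence no witness for the original pair $(x,y)$; and in your second route the ``remaining tangible--tangible case'' is exactly the case that cannot be ruled out, because a mixing transmission may send tangibles to nonzero ghosts.

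This is not a removable technicality: the implication you need is false. Let $M=\{0<e\}$, let $U$ have $eU=M$ and tangibles $1,x,y,t,r,s$, where every product of two non-unit tangibles equals $e$ except $xt=s$, $yt=r$, and let $V$ have tangibles $1,x',y',p,q$ with every such product equal to $e$ except $x'p=q$ (both are associative, since every triple product of non-unit tangibles is $e$). The map $\al:U\onto V$ over $M$ sending $x\mapsto x'$, $y\mapsto y'$, $t\mapsto p$, $s\mapsto q$, $r\mapsto e$ is a surjective transmission whose only nontrivial fiber is $\{r,e\}$, with witness $1$, so $\al$ is mixing; the MFCE-relation on $V$ with classes $\{x',y'\}$ and $\{q,e\}$ gives a mixing $\bt=\pi_E:V\onto V/E$ (witnesses $p$ and $1$). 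Yet $\bt\al(x)=\bt\al(y)$, while for every $u\in U$ the pair $(xu,yu)$ equals $(x,y)$, $(s,r)$, $(e,e)$ or $(0,0)$, so it is never of different kind, and $\bt\al$ is not mixing. So the case $\al(x)\neq\al(y)$ cannot be settled in the stated generality by any choice or combination of witnesses. Be aware that the paper's own proof glides over the same point with ``clearly also $xz$ and $yz$ are of different kind'', which is precisely the invalid inference; your unease was well placed, but a correct treatment requires an additional hypothesis (some tangibility control on $\al$ over the relevant elements) rather than the fixes you sketch.
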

\begin{proof} 1) $(\bt \al )^{-1} = \{ 0_U\} $, since $\bt^{-1}(0_W) = \{ 0_V\}$ and
$\al^{-1}(O_V) = \{ 0_U\}$.
\pSkip
2) We now verify the claim in the case that $\al$ is surjective. Given $x,y \in U \sm \00$, where $x \neq y$ and $\bt \al (x) = \bt \al (y)$, we need to find some $z \in U$, necessarily $\neq 0$,  such that $\bt \al (xz) \neq 0 $ and $xz$ and $yx$ are of different kind.

Assume first that $\al(x) = \al(y)$. Since $\al$ is mixing, there exists $z \in U$ for which  $xz \neq 0$, $yz \neq 0$ and $xz$, $yz$ are of different kind. There remain the case that $\al(x) \neq \al(y)$. By Step~ 1 it is clear that both $\al(x)$ and  $\al(y)$ are not zero (and also $\bt\al(x) = \bt \al(y) \neq 0$). Since~ $\bt$ is mixing, there is some $v \in V \sm \00 $ with $\al(x)v$ and $\al(y)v$ of different kind. As $\al$ is surjective, there is some $z \in U \sm \00$ with $\al(z) =v$. Then  $\al(x)v = \al(xz)$,  $\al(y)v = \al(yz)$. Since $\al(x)v$ and $\al(y)v$ are of different kind, clearly also $xz$ and $yz$ are of different kind.
\pSkip
3) When $\al$ is not surjective, we have a factorization $\al = j \al_0 $ where $\al_0: U \onto  \al(U)$ is a  surjective transmission and $j : \al (U) \into V$ is an injective transmission. Then $\al = (\bt j ) \circ \al_0$. We conclude by Lemma \ref{lem:3.8} that $\al_0$ is mixing, since $j \al_0$ is mixing, and by Remark \ref{rem:3.4}.(iv) that $\bt j$ is mixing. Finally, by Step 2 we obtain that $\bt \al = (\bt j) \circ \al_0$ is mixing.
\end{proof}

\begin{thm}\label{thm:4.2}
Assume that
  $\al:U \to U'$ and $\bt:U' \to U'' $ are transmissions with
   $(t,m)$-factorizations $\al = \al_m \circ \al_t$, $\bt = \bt_m \circ \bt_t$, having the surjective tangible parts
   $\al_t: U \to~ V'$, $\bt_t:U' \onto V''$ and the  mixing parts $\al_m: V' \to U'$, $\bt_m:V'' \onto U''$, respectively (cf. Definition ~\ref{defn:3.12}). Consider also the  $(t,m)$-factorization of $\rho:= \bt_t \circ \al_m:V' \to V''$ with surjective tangible part $\rho_t:V' \to W$ and mixing part $\rho_m: W \to V''$, so that all together the following commutative diagram of transmissions appears:
   $$
\xymatrix{
& & && W \ar @{>}[d]^{\rho_m}
\\
  & & V' \ar @{>}[d]^{\al_m} \ar @{>}[rr]^{\rho} \ar @{>}[rru]^{\rho_t} & & V'' \ar @{>}[d]^{\bt_m}
  \\
U  \ar @{>}[rr]_{\al} \ar @{>}[rru]^{\al_t}   &  & U' \ar @{>}[rr]_{\bt} \ar @{>}[rru]^{\bt_t} &  & U'' }
$$
Then the transmission $\bt \al : U \to U'' $ has the surjective tangible part $\rho_t \al_t$ and the mixing part $\bt_m \rho_m$.
\end{thm}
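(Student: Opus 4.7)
The plan is to verify directly that the factorization $\beta\alpha = (\beta_m\rho_m)\circ(\rho_t\alpha_t)$ has the required structural properties, and then conclude by the uniqueness part of the $(t,m)$-factorization (Corollary \ref{cor:3.11}.(ii)). The equality itself is a routine computation: $\beta\alpha = \beta_m\beta_t\alpha_m\alpha_t = \beta_m\rho\alpha_t = \beta_m\rho_m\rho_t\alpha_t$, which is visible straight from the commutative diagram.

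Next I would check that $\rho_t\alpha_t:U\to W$ is surjective and tangible. Surjectivity is immediate, since $\alpha_t$ and $\rho_t$ are both surjective. Tangibility follows directly from the definition: if $x\in\tT(U)$, then $\alpha_t(x)\in\tT(V')\cup\{0\}$, and if $\alpha_t(x)\neq 0$ then $\rho_t(\alpha_t(x))\in\tT(W)\cup\{0\}$, so in either case $\rho_t\alpha_t(x)\in\tT(W)\cup\{0\}$. Thus the composite of two surjective tangible transmissions is again a surjective tangible transmission; this is the easy half.

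The genuine content is then that $\beta_m\rho_m:W\to U''$ is mixing. But this is exactly what Proposition \ref{prop:4.1} provides, since both $\rho_m$ and $\beta_m$ are mixing by construction. I would therefore simply invoke \propref{prop:4.1} to handle this step. (Had we not established \propref{prop:4.1} already, this would be the main obstacle, as it requires a somewhat delicate argument distinguishing cases according to whether intermediate images coincide.)

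Finally, to conclude that $\rho_t\alpha_t$ and $\beta_m\rho_m$ actually \emph{are} the surjective tangible part and the mixing part of $\beta\alpha$ (not merely \emph{some} such factorization), I would apply the uniqueness assertion in Corollary \ref{cor:3.11}.(ii): given the canonical $(t,m)$-factorization $\beta\alpha = (\beta\alpha)_m\circ(\beta\alpha)_t$ and the factorization $\beta\alpha = (\beta_m\rho_m)\circ(\rho_t\alpha_t)$ into a surjective tangible transmission composed with a mixing transmission, there is a unique isomorphism identifying the two, so up to canonical isomorphism the constructed factors are precisely $(\beta\alpha)_t$ and $(\beta\alpha)_m$. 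This finishes the proof.
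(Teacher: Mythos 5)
Your proposal is correct and follows essentially the same route as the paper: write $\bt\al=(\bt_m\rho_m)(\rho_t\al_t)$, note that the composite of the surjective tangible transmissions is surjective tangible, invoke Proposition \ref{prop:4.1} for the mixing of $\bt_m\rho_m$, and conclude by the uniqueness statement in Corollary \ref{cor:3.11}.(ii). Nothing is missing.
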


\begin{proof} We have the factorization
$$ \bt \al = (\bt_m \rho_m )(\rho_t \al_t).$$
  The transmission $\rho_t \al_t$ is surjective tangible, while  $\bt_m \rho_m$ is mixing by Proposition \ref{prop:4.1}. The assertion follows from the uniqueness of $(t,m)$-factorizations, stated in Corollary~ \ref{cor:3.11}.(ii).
\end{proof}

\section{Ideal-generating sections and tyrants}\label{sec:5}

We exhibit a class of  special elements of the poset
$\Mix(U)$, introduced in \S\ref{sec:3}.

\begin{defn}\label{defn:5.1} Let $U$ be a supertropical monoid with $M :=e
U.$ A map $\s: M \to U$ is  called an \textbf{ig-section}  (=
\textbf{ideal-generating section}) in $U$, if
\begin{alignat*}{3}
&(\SC1)\quad && \forall x \in M : \quad e \s(x) =x,\\
& & & \quad \text{i.e., $\s$ is a section of  $\nu_U : U \onto M;$}\\
&(\SC2)\quad && \s(M) U \subset M \cup \s(M),\\
& & & \quad \text{i.e., $\mfA(s) := M \cup \s(M)$ is an ideal of  $U$
containing $M$.}
\end{alignat*}
\end{defn}

\begin{remark}\label{rem:5.2}
Assuming $(\SC1)$,  the condition $(\SC2)$ is equivalent to:
\begin{alignat*}{3}
&(\SC2')\quad && \text{If } x\in M, y \in U, s(x)y \in \tT(U),
\text{ then } s(x)y = s(xy).
\end{alignat*}
\end{remark}

\begin{proof}
$(\SC2) \Rightarrow (\SC2')$: A priori we have $e\s(x)y = xy \in
M$. By  $(\SC2)$ we know that either $\s(x)y = xy$ or $\s(x)y =
\s(xy). $ For  $s(x) y \in \tT(U)$, this implies  $\s(x)y = \s(xy).
$
\pSkip
$(\SC2') \Rightarrow (\SC2)$: Given $x\in M$, $y\in U$,  we
verify that $ \s(x)y  \in M \cup \s(M)$. If  $\s(x)y \notin M$,
then $(\SC2')$ states that  $\s(x)y = \s(xy) \in \s(M)$.
\end{proof}

\begin{thm}\label{thm:5.3} $ $
\begin{enumerate} \eroman
    \item If $\s$ is an ig-section in $U$, then the ideal
    compression (cf. \cite[Definition 2.5]{IKR4})
    $$ E:=E(\s) := E(U, \s(M)) = E(U, M \cup \s(M))$$ is an
    MFCE-relation such that each $E$-equivalence class  contains
    at most one tangible element, hence consists of at most 2
    elements. More precisely, the only equivalence classes, which
    are not one-element sets, are  classes of the form $[a]_E = \{a, \s(a)\}$
    with $a \in M$, $\s(a) \in \tT(U).$
    \item All these MFCE-relations $E(\s)$ are mixing.

    \item We have a bijection $\s \mapsto E(\s)$ from the set
    $\IGS(U)$ of all ig-sections in $U$ onto the set of all
    MFCE-relations $E$ on $U$ with $|[x]_E \cap \tT(U)| \leq 1$ for every $x \in U.$
\end{enumerate}

\end{thm}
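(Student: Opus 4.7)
The plan is to prove the three parts in order, exploiting the characterization $(\SC2')$ of Remark \ref{rem:5.2} and a direct description of the equivalence classes of $E := E(\s)$. For (i), I first observe that by the definition of the ideal compression with ghost kernel $M \cup \s(M)$, a non-singleton $E$-class must consist of elements of $M \cup \s(M)$ sharing a common ghost. Each fiber $\nu_U^{-1}(a)$ contains the single element $a$ from $M$ and at most the single element $\s(a)$ from $\s(M)$ (since $\s(b) \in \nu_U^{-1}(a)$ forces $b = e\s(b) = a$). Thus every class has at most two elements and at most one tangible element, and the non-singleton classes are precisely $\{a, \s(a)\}$ with $a \in M$ and $\s(a) \in \tT(U)$. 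This already implies $E(U,\s(M)) = E(U, M \cup \s(M))$. It remains to check multiplicativity on a non-singleton class: given any $z \in U$, one has $az \in M$, while $(\SC2)$ ensures $\s(a)z \in M \cup \s(M)$; if $\s(a)z \in M$ then $\s(a)z = az$ (since $M$ has a unique representative per fiber), while if $\s(a)z \in \tT(U)$ then $(\SC2')$ gives $\s(a)z = \s(az)$. In both cases $\s(a)z \sim_E az$, so $E$ is an MFCE-relation.

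For (ii), the witness $z = 1$ required by Definition \ref{defn:3.6} suffices. A non-singleton class $\{a, \s(a)\}$ contains a ghost and a tangible, so its two elements are of different kind. Moreover $a \neq 0$, for otherwise $(\SC1)$ forces $\s(0) = 0$ and the class would collapse to $\{0\}$. Hence $\pi_E(a) \neq 0$ in $U/E$, and $E$ is mixing.

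For (iii), part (i) already yields an explicit inverse recipe: from an MFCE-relation $E$ on $U$ with at most one tangible element per class, define $\s(a)$ to be the unique tangible element of $[a]_E$ when such an element exists, and $\s(a) := a$ otherwise. Injectivity of $\s \mapsto E(\s)$ is immediate, since the same recipe applied to $E(\s)$ recovers $\s$. For surjectivity, one checks that this $\s$ is an ig-section and that $E(\s) = E$. Condition $(\SC1)$ holds since $E$ is fiber conserving. For $(\SC2)$, suppose $\s(a) \in \tT(U)$ and $z \in U$; multiplicativity of $E$ gives $\s(a)z \sim_E az =: b \in M$, so $\s(a)z$ lies in $[b]_E$. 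If $\s(a)z \in M$ then $\s(a)z = b$; if $\s(a)z \in \tT(U)$ then uniqueness of the tangible representative in $[b]_E$ forces $\s(a)z = \s(b) \in \s(M)$. Finally, $E(\s) = E$ follows because both relations have the same non-singleton classes, namely $\{a, \s(a)\}$ for those $a \in M$ with $\s(a) \in \tT(U)$.

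The main obstacle I anticipate is the verification of $(\SC2)$ in (iii), which relies crucially on the uniqueness of the tangible representative in each $E$-class; the remainder of the argument is essentially careful bookkeeping of equivalence classes combined with the ig-section axioms $(\SC1)$ and $(\SC2')$.
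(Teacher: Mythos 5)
Your proposal is correct and follows essentially the same route as the paper: part (i) by analyzing the fibers of the ideal compression of $M \cup \s(M)$, part (ii) by noting that the only non-singleton classes pair a nonzero ghost with a tangible (so $z=1$ witnesses mixing), and part (iii) by defining $\s(a)$ as the unique tangible element of $[a]_E$ (or $a$) and using multiplicativity of $E$ to verify $(\SC2')$ and recover $E(\s)=E$. The only differences are cosmetic: you re-verify multiplicativity of the compression, which the paper takes from \cite[Definition 2.5]{IKR4}, and you spell out the "obvious" mixing argument.
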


\begin{proof} (i): Given an ig-section $\s:M \to U,$ let $x,y \in
U$, $x \neq y$ and $x \sim_{E(\s)} y.$ Then $ex=ey =:a$ and $x
\sim_{E(\s)} a \sim_{E(\s)} y.$ Since $\mfA(\s) \cap U_a = \{a,
\s(a) \}$, this forces $x =a$, $y = \s(a) \in \tT(U)$, or vice
versa. Thus $[x]_{E(\s)} = [a]_{E(\s)} = \{a, \s(a) \} $. \pSkip
(ii): Now obvious. \pSkip
(iii): Let $E$ be an MFCE-relation on $U$ such that each
equivalence class contains at most one  tangible element. Given
$a\in M$, we have
$$ [a]_E = \{a, \s(a) \} ,$$ where either $\s(a) = a$ or $\s(a)$ is  the
unique tangible element in $[a]_E$. This gives a  function $\s:
M \to U$ with $e \s(a) =a $ for all $a \in M$. Suppose $y \in U$
and $\s(a)y \in \tT(U)$. Then $\s(a) \in \tT(U)$ and $a \sim_E
\s(a)$. Since $E$ is multiplicative, it follows that $ay \sim_E
\s(a)y \in \tT(U)$, hence $\s(a)y = \s(ay)$. Claim (iii)
is now obvious.
\end{proof}

\begin{defn}\label{defn:5.4} We carry over the partial order on the
subposet $\{ E(\s) \ds | \s \in \IGS(U)\} $ of the lattice
$\MFC(U)$ of MFCE-relations on $U$ to the set $\IGS(U)$ by the
bijection $\s \mapsto E(\s).$ In other terms, for $s,t
\in \IGS(U)$ we define  $$t \leq s \dss \rightleftharpoons E(t) \subset
E(s).
$$
\end{defn}
Clearly the following holds.

\begin{remark}\label{rem:5.5} Let $s,t \in \IGS(U)$. The following
are equivalent.
\begin{enumerate} \eroman
    \item $t \leq s.$
    \item For any $a\in M$ either $t(a) = s(a)$ or $t(a) =a$.
\end{enumerate}
\end{remark}

 By  Theorem \ref{thm:5.3} it is obvious that $\{ E(\s) \ds | \s
\in \IGS(U)\} $ is a lower set of the  lattice $\MFC(U)$
contained  in  $\Mix(U)$, and that the \textbf{trivial ig-section}
$a \mapsto a$ is the bottom element of $\IGS(U).$
Moreover we
clearly have the following
\begin{prop}\label{prop:5.6} 
The infimum $s \wedge t $ of $s$ and $t$ exists in the poset $\IGS(U)$ and is
given~ by $$ (s \wedge t)(a) := \left\{
\begin{array}{ll}
  \s(a) & \text{if } \s(a) = t(a) \in \tT(U),\\
  a & \text{else}. \\
\end{array}
\right.
$$
Moreover $E(s \wedge t) = E(s) \cap  E(t).$
\end{prop}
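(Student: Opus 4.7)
The plan is to define $r \colon M \to U$ by the displayed formula and check the four tasks: (a) $r$ is an ig-section, (b) $r \leq s$ and $r \leq t$, (c) $r$ is maximal among ig-sections below $s$ and $t$, (d) $E(r) = E(s) \cap E(t)$.

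First I would verify that $r$ satisfies (SC1) and (SC2). Condition (SC1) is immediate: when $s(a) = t(a) \in \tT(U)$ we have $er(a) = es(a) = a$, and otherwise $r(a) = a$ with $ea = a$. For (SC2) I would use the equivalent formulation (SC2$'$) from Remark~\ref{rem:5.2}. Suppose $r(a)y \in \tT(U)$. If $r(a) = a \in M$, then $r(a)y = ay \in M$, contradicting tangibility; so this case is vacuous. Otherwise $r(a) = s(a) = t(a) \in \tT(U)$, and (SC2$'$) applied to $s$ and $t$ yields $s(a)y = s(ay) \in \tT(U)$ and $t(a)y = t(ay) \in \tT(U)$. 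Hence $s(ay) = t(ay) \in \tT(U)$, so by definition $r(ay) = s(ay) = s(a)y = r(a)y$, verifying (SC2$'$) for $r$.

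Next, by Remark~\ref{rem:5.5}, to check $r \leq s$ it suffices to observe that for every $a \in M$ either $r(a) = s(a)$ (when $s(a) = t(a) \in \tT(U)$) or $r(a) = a$; the same argument gives $r \leq t$. For the infimum property, let $u \in \IGS(U)$ with $u \leq s$ and $u \leq t$. Fix $a \in M$. If $u(a) = a$, then $u(a) \leq r(a)$ in the sense of Remark~\ref{rem:5.5}. Otherwise, $u \leq s$ forces $u(a) = s(a)$, and since $u(a) \neq a$ (so $u(a) \in \tT(U)$), also $u \leq t$ forces $u(a) = t(a)$. Thus $s(a) = t(a) = u(a) \in \tT(U)$, which by definition means $r(a) = s(a) = u(a)$. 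In either case $u \leq r$ by Remark~\ref{rem:5.5}, confirming that $r = s \wedge t$ in $\IGS(U)$.

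Finally, for $E(r) = E(s) \cap E(t)$, I would use the explicit description of these MFCE-relations from Theorem~\ref{thm:5.3}(i): the nontrivial classes of $E(s)$ are exactly the pairs $\{a, s(a)\}$ with $s(a) \in \tT(U)$, and similarly for $t$ and $r$. A pair $\{x,y\}$ with $x \neq y$ lies in $E(s) \cap E(t)$ iff there exist $a,b \in M$ with $\{x,y\} = \{a, s(a)\} = \{b, t(b)\}$ and $s(a), t(b) \in \tT(U)$; since each such pair has exactly one ghost component, necessarily $a = b$ and then $s(a) = t(a) \in \tT(U)$, which is precisely the condition defining the nontrivial $E(r)$-classes. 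The only subtle point is justifying (SC2$'$) for $r$, where one must ensure that the tangibility assumption forces the two applications of (SC2$'$), for $s$ and for $t$, to yield \emph{the same} element $s(ay) = t(ay)$ so that $r(ay)$ is defined by the first branch of the formula; this is the one step that requires care, but follows cleanly as above.
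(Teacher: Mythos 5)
Your proof is correct, and it fills in exactly the routine verification the paper leaves implicit (Proposition~\ref{prop:5.6} is stated there without proof as a clear consequence of Theorem~\ref{thm:5.3} and Remark~\ref{rem:5.5}): checking (SC2$'$) for the candidate infimum, using Remark~\ref{rem:5.5} for the lower-bound and greatest-lower-bound properties, and comparing the two-element classes from Theorem~\ref{thm:5.3}(i) to get $E(s\wedge t)=E(s)\cap E(t)$. The one delicate point, that tangibility of $r(a)y$ forces $s(ay)=t(ay)$ so that $r(ay)$ falls in the first branch, is handled correctly.
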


In general, two elements $s,t$ of the poset $\IGS(U)$ need not
have a common upper bound, but if they do, there exists a least
upper bound (= supremum) of $s,t$. More precisely, we have the
following fact.

\begin{prop}\label{prop:5.7}
Assume that $s,t$ are ig-sections in $U$, and that there exists an
ig-section~$u$ in $U$ with $s \leq u$, $t \leq u$. Then the
function $s \vee t: M \to U $ given  by ($a \in M$)

$$ (s \vee t)(a) := \left\{
\begin{array}{ll}
  \s(a) = t(a)& \text{if } \s(a), t(a) \in \tT(U),\\[1mm]
  \s(a)       & \text{if } \s(a)  \in \tT(U), t(a) \in M, \\[1mm]
  t(a)       & \text{if } \s(a)  \in M, t(a) \in \tT(U), \\[1mm]
  a       & \text{if } \s(a)=  t(a)=a, \\
\end{array}
 \right.
$$
is a well defined ig-relation in $U$, and is the supremum  of $s$
and $t$ in the poset $\IGS(U). $ In the lattice $\MFC(U)$
$$E(s \vee t) = E(s) \vee E(t).$$
\end{prop}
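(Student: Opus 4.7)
The plan is to verify in order: (a) that the formula for $s \vee t$ is well-defined, (b) that $s \vee t$ satisfies the ig-section axioms, (c) that it is the supremum of $s,t$ in $\IGS(U)$, and finally (d) that $E(s \vee t)$ coincides with the join $E(s) \vee E(t)$ inside the ambient lattice $\MFC(U)$. Well-definedness is only an issue in the first case, where one must check $s(a) = t(a)$ whenever both values are tangible; this is exactly where the common upper bound $u$ is used. By Remark \ref{rem:5.5}, for each $a \in M$ we have $s(a) \in \{u(a), a\}$, and since $a \in M$ while $s(a) \in \tT(U)$ are of different kind, $s(a) = u(a)$. The same applies to $t$, yielding $s(a) = t(a) = u(a)$.

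To see that $s \vee t$ is an ig-section, condition $(\SC1)$ is immediate by inspection, since $e s(a) = e t(a) = a$ in every case. For $(\SC2')$, given $a \in M$ and $y \in U$ with $(s \vee t)(a)\, y \in \tT(U)$, the case $(s \vee t)(a) = a$ is excluded since then $(s \vee t)(a) y = a y \in M$. Thus $(s \vee t)(a)$ equals $s(a)$ or $t(a)$, say $s(a) \in \tT(U)$; applying $(\SC2')$ for $s$ gives $s(a) y = s(ay) \in \tT(U)$, so by the tangibility of $s(ay)$ the definition forces $(s \vee t)(ay) = s(ay)$ (using the well-definedness argument once more in the first case). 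The other subcase is symmetric.

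For the supremum property, $s \leq s \vee t$ reduces via Remark \ref{rem:5.5} to the case-by-case verification that $(s \vee t)(a) \in \{s(a), a\}$: one checks for instance that in the subcase $s(a) \in M,\ t(a) \in \tT(U)$ one has $s(a) = a$ (since $M$ elements are $e$-fixed), so $(s \vee t)(a) = t(a)$ but also $s(a) = a$, which matches. Symmetrically, $t \leq s \vee t$. If $w \in \IGS(U)$ dominates both $s$ and $t$, then invoking the well-definedness argument with $w$ in place of $u$ shows in each of the four cases that $(s \vee t)(a) \in \{w(a), a\}$, hence $s \vee t \leq w$.

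The last step is to identify $E(s \vee t)$ with $E(s) \vee E(t)$ in $\MFC(U)$, and this is where some care is needed, since joins in lattices of equivalence relations normally require transitive closure. The key observation is that as subsets of $U \times U$,
$$ E(s \vee t) \;=\; E(s) \cup E(t). $$
Indeed, the non-diagonal pairs in $E(s \vee t)$ are exactly the two-element fibres $\{a, (s \vee t)(a)\}$ with $(s \vee t)(a) \in \tT(U)$, and by the definition such a pair equals $\{a, s(a)\}$ or $\{a, t(a)\}$; conversely, whenever $s(a) \in \tT(U)$ the formula gives $(s \vee t)(a) = s(a)$, so $E(s) \subset E(s \vee t)$, and likewise for $t$. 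Since $E(s \vee t)$ is already an MFCE-relation by Theorem \ref{thm:5.3}.(i), the union $E(s) \cup E(t)$ is both transitive and multiplicative, hence equals the join $E(s) \vee E(t)$ in $\MFC(U)$. The main technical obstacle is the interplay in step (b) between the piecewise definition of $s \vee t$ and the condition $(\SC2')$, which forces one to re-use the well-definedness argument when checking that $(s \vee t)(ay)$ is read off from the correct case of the formula.
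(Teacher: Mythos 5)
Your proof is correct and follows essentially the paper's route: well-definedness of the first case comes from the common upper bound $u$ (both $s(a)$ and $t(a)$ lie in $\{a,u(a)\}$, and a tangible value cannot equal $a\in M$), and your case-by-case verification that $s\vee t$ is an ig-section and is the supremum is precisely the check the paper dismisses as routine. The only divergence is the final identity $E(s\vee t)=E(s)\vee E(t)$: the paper deduces it from the fact that $s\mapsto E(s)$ is an order isomorphism of $\IGS(U)$ onto a lower subset of the lattice $\MFC(U)$, whereas you show directly that $E(s\vee t)=E(s)\cup E(t)$ as subsets of $U\times U$, so that the union is already an MFCE-relation and hence is the join. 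Both arguments are valid; yours has the small bonus of making explicit that no transitive closure is needed when forming this join.
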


\begin{proof}
Recall that for every  $a \in M$ both $\s(a)$ and $t(a)$ are
elements of the set $\{a, u(a) \}$. Thus the function $s \vee t$
is certainly well defined  (and independent of the choice of $u$).
It is routine  to verify  that $s \vee t$ is
an ig-section and is the supermum of $s,t$ in $\IGS(U)$. The
isomorphism $s \mapsto E(s)$ from $\IGS(U)$ onto a lower subset of
the lattice $\MFC(U)$ implies that $E(s) \vee E(t) = E(s \vee
t).$
\end{proof}

By the same vein we obtain.
\begin{prop}\label{prop:5.8} Assume that $(\s_i \ds | i \in I)$ is
a family in the poset $\IGS(U)$ such that for any two indices $i,j
\in I$ the elements $s_i, s_j$ have an upper bound in $\IGS(U)$
(hence the supremum $s_i \vee s_j$ exists by Proposition
\ref{prop:5.7}). Then we have a well defined function $s: M \to U$
with $s(a):= s_i (a)$ if there exists some $i \in I$ with $s_i(a )
\in \tT(U)$, and $s(a) : =a $ else. The function $s$ is an
ig-section in $U$, and is the supermum of the family $(\s_i \ds |
i \in I)$ in $\IGS(U)$, $s = \bigvee_i s_i$. In the complete
lattice $\MFC(U)$ we have
$$ E(s) = \bigvee_{i \in I} E(s_i).$$
\end{prop}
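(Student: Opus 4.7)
The plan is to mimic the two-element supremum construction of Proposition~\ref{prop:5.7} pointwise, with the directedness furnished by the pairwise upper bound hypothesis doing all the work of reconciling values at a given $a \in M$.

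First I would establish that the definition of $s$ is unambiguous. Suppose $s_i(a) \in \tT(U)$ and $s_j(a) \in \tT(U)$ for the same $a \in M$. By hypothesis, $s_i$ and $s_j$ admit a common upper bound $u \in \IGS(U)$, and Remark~\ref{rem:5.5} tells us that each of $s_i(a), s_j(a)$ lies in $\{a, u(a)\}$. Since $a \in M$ is ghost while $s_i(a)$ is tangible, necessarily $s_i(a) = u(a)$, and likewise $s_j(a) = u(a)$, so $s_i(a) = s_j(a)$. Hence $s$ is a well-defined map $M \to U$, independent of the witness $i$.

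Next I would verify that $s \in \IGS(U)$. Condition (SC1) is immediate: if $s(a) = s_i(a)$ then $e s(a) = e s_i(a) = a$, and if $s(a) = a$ then $e s(a) = a$ since $a \in eU = M$. For (SC2) it suffices (by Remark~\ref{rem:5.2}) to check (SC2$'$): suppose $s(a)y \in \tT(U)$. If $s(a) = a$, then $s(a)y = ay \in M$, contradicting tangibility; so $s(a) = s_i(a)$ with $s_i(a) \in \tT(U)$ for some $i$. Applying (SC2$'$) to $s_i$ gives $s_i(a)y = s_i(ay) \in \tT(U)$, and by well-definedness $s(ay) = s_i(ay) = s_i(a)y = s(a)y$, as required.

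For the supremum assertion I would argue that $s_i \leq s$ for every $i$ using Remark~\ref{rem:5.5}: given $a \in M$, either $s_i(a) = a$, or $s_i(a) \in \tT(U)$, in which case the well-definedness step forces $s_i(a) = s(a)$. Conversely, if $t \in \IGS(U)$ satisfies $s_i \leq t$ for all $i$, then for $a \in M$ with $s(a) = s_i(a) \in \tT(U)$ the inequality $s_i \leq t$ together with tangibility forces $s_i(a) = t(a)$, giving $s(a) = t(a)$; the remaining case $s(a) = a$ is trivial. Hence $s = \bigvee_i s_i$ in $\IGS(U)$.

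Finally, to identify the join in $\MFC(U)$, I would check both inclusions. Since $s_i \leq s$ entails $E(s_i) \subset E(s)$, we get $\bigvee_i E(s_i) \subset E(s)$. For the reverse inclusion, let $E'$ be any MFCE-relation with $E(s_i) \subset E'$ for all $i$; any nondiagonal $E(s)$-equivalence is of the form $\{a, s(a)\}$ with $s(a) = s_i(a) \in \tT(U)$ for some $i$, and then $a \sim_{E(s_i)} s_i(a) = s(a)$, so $a \sim_{E'} s(a)$. Thus $E(s) \subset E'$, proving $E(s) = \bigvee_i E(s_i)$. The only real obstacle is the well-definedness step, since it is what makes the pointwise recipe coherent; once that is in place the remaining verifications are routine extensions of Propositions~\ref{prop:5.6}--\ref{prop:5.7}.
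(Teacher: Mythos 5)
Your proposal is correct and follows essentially the same route the paper intends: the pointwise definition made coherent by Remark~\ref{rem:5.5} applied to a common upper bound (exactly as in Proposition~\ref{prop:5.7}), followed by the routine verification of (SC1), (SC2$'$), the supremum property, and the identification of $E(s)$ with the join in $\MFC(U)$. The paper leaves these details as "by the same vein"; you have simply written them out, including a direct check of $E(s)=\bigvee_i E(s_i)$ in place of the paper's appeal to the order-embedding $s\mapsto E(s)$, which is a harmless and self-contained substitute.
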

Applying this proposition to chains in $\IGS(U)$ we  obtain by
Zorn's lemma

\begin{cor}\label{cor:5.9}
For every $s \in \IGS(U)$ there exists a maximal element $s'$ of
the poset $\IGS(U)$ with $s \leq s'$.
\end{cor}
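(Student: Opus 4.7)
The plan is to deduce the corollary from Proposition \ref{prop:5.8} by a direct application of Zorn's lemma. Let me first set up the poset in which we will work: consider the subposet
$$\IGS(U)_{\geq s} := \{ t \in \IGS(U) \ds | s \leq t \},$$
which is nonempty since it contains $s$ itself. It suffices to show that every chain in $\IGS(U)_{\geq s}$ has an upper bound in $\IGS(U)_{\geq s}$.

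Let $(s_i \ds | i \in I)$ be such a chain. The hypothesis of Proposition \ref{prop:5.8} is automatically satisfied: for any $i,j \in I$, comparability in the chain means $s_i \leq s_j$ or $s_j \leq s_i$, so one of them already serves as a common upper bound of $s_i$ and $s_j$ in $\IGS(U)$. Proposition \ref{prop:5.8} therefore produces the ig-section $s^* := \bigvee_{i \in I} s_i \in \IGS(U)$, defined at each $a \in M$ by
$$ s^*(a) = \left\{\begin{array}{ll} s_i(a) & \text{if some } s_i(a) \in \tT(U), \\ a & \text{otherwise,} \end{array}\right.$$
and this is the supremum of the family in $\IGS(U)$. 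In particular $s \leq s_i \leq s^*$ for all $i \in I$ (noting that if $I = \emptyset$ we take $s^* := s$), so $s^* \in \IGS(U)_{\geq s}$ is an upper bound of the chain.

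By Zorn's lemma there exists a maximal element $s'$ of $\IGS(U)_{\geq s}$. Any $t \in \IGS(U)$ with $s' \leq t$ automatically satisfies $s \leq t$, so $t \in \IGS(U)_{\geq s}$, and maximality of $s'$ in $\IGS(U)_{\geq s}$ forces $t = s'$. Hence $s'$ is maximal in the full poset $\IGS(U)$, which proves the corollary. There is essentially no obstacle here beyond checking that the upper bound produced by Proposition \ref{prop:5.8} lies above $s$, which is immediate from the pointwise description of $s^*$.
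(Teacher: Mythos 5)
Your proof is correct and follows exactly the route the paper intends: apply Proposition \ref{prop:5.8} to a chain (whose pairwise upper bounds exist trivially by comparability) to obtain an upper bound in $\IGS(U)$ lying above $s$, then conclude by Zorn's lemma. The extra care you take with the subposet $\IGS(U)_{\geq s}$ and the empty-chain case just makes explicit what the paper leaves implicit.
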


\begin{defn}\label{defn:5.10} We call an ig-section $s: M \to U$
\textbf{primitive}, if there exists some $x\in\tT(U)$ such that
the ideal $\mfA(s) = M \cup s(M)$ of $U$ is generated by $e$ and
$x$, $\mfA(s) = eU \cup xU$. In this case we call $x$ a
\textbf{generator} of the section $s$.
\end{defn}

In more elaborate term this means that $x = s(a) \in
\tT(U)$ with $a:= ex$, and if $z:= s(c) \in \tT(U)$ for some $c
\in M$, then there exists some $y \in U$ such that $z =xy$. As a
consequence, all products $xy$ with $y \in U$, $xy \in
\tT(U)$, $exy =c$, are equal. Indeed, we have $xy = s(a)y =
s(ay),$ hence $c = es(ay) = ay$, an thus  $xy = s(c).$

To get  a better grasp at this situation  we
introduce some terminology concerning   divisibility in the subset
$\tT(U)$ of the monoid $U$.

\begin{defn}\label{defn:5.11} Given $x,y \in \tT(U)$ we say that
$z$ is a \textbf{son} of $x$ and $x$ is a \textbf{father} of $z$,
if $z =xy$ with some $y \in U$ (which then necessarily lies in
$\tT(U)$). An element $x$ of $\tT(U)$ is said to be a
\textbf{tyrant} of $U$ (or ``$a$ is \textbf{tyrannic} in $U$''),
if for any $c \in M$ there exists at most one son $z$ of $x$ with
$ez =c.$
\end{defn}

The term   ``tyrant''  alludes to the property of $x$ that
every tangible ``genetic outcome''~ $xy$ of pairing $x$ with some
$y \in \tT(U)$ is completely determined by its ghost $exy$ and
~$x$.
Notice that the ghost $b:= ey$ is not uniquely determined
by $a := ex$ and $c$, since~$M$ may not be cancellative. \pSkip

In this terminology we can recast Definition \ref{defn:5.10} as
follows.

\begin{remark}\label{rem:5.12} A function $s: M \to U$ with $\nu_U \circ s =
\id_M$ is a primitive ig-section iff there exists a tyrant $x$ in
$U$ such that $s(M) \cap \tT(U)$ is the set of all sons of $x$.
These tyrants $x$ are then  the generators of the primitive
ig-section $s$.
\end{remark}

A primitive ig-section may have several generators, but they all
are ``associated'' in the following sense.
\begin{defn}\label{defn:5.13} We call two elements $x,z$ of
$\tT(U)$ \textbf{associated} (in $\tT(U)$) if $z$ is a son of~ $x$
and $x$ is a son of $z.$ We then write $x \sim _{\tT} z$ (or more
precisely  $x \sim _{\tT(U)} z$).
\end{defn}

We use similar terminology for divisibility in the monoid $M$.
\begin{defn} Given an elements $a,c$ of $M$, we say that $c$ is
\textbf{divisible}  by $a$, and write $a|c$, if $c = ab$ for some
$b \in M$ (perhaps not uniquely determined by $a$ and $c$). We say
that $a$ and~ $c$ are \textbf{associated in} $M$, and write $a
\sim_M c$, if $a | c$ and $c|a .$ (If necessary, we write $a |_M c $ instead of $a|c$.) We also set $a \sim_\tT c$
if $a \in \tT(U) c $  and $c \in \tT(U) a$.
\end{defn}

It is obvious from Definition \ref{defn:5.11} that any son of a
tyrant of $U$ is again a tyrant of $U$.
\begin{thm}\label{thm:5.14} Assume that $x$ is a tyrant of  $U$.
Let  $s_x: M \to U$  be the map defined by $s_x(c) := z$, if $x$ has a son $z$ with $ez =c, $ and $s_x(c):=c$ otherwise. Then $s_x$ is a
primitive ig-section of $U$ with generator $x$.
\end{thm}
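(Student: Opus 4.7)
The plan is to unfold the definition of $s_x$ and verify the two axioms (SC1) and (SC2), the latter in its equivalent form (SC2') from Remark \ref{rem:5.2}, and then identify the generating ideal as $eU \cup xU$.

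First I would observe that $s_x$ is well defined: because $x$ is a tyrant, for each $c \in M$ there is at most one son $z$ of $x$ with $ez = c$, so the prescription unambiguously produces a map $s_x : M \to U$. Axiom (SC1), namely $e \cdot s_x(c) = c$, is then immediate: if $s_x(c) = c \in M = eU$ then $e c = c$, and if $s_x(c) = z$ is the unique son of $x$ with $ez = c$, then again $ez = c$.

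Next I would verify (SC2'): assuming $a \in M$, $y \in U$ and $s_x(a) y \in \tT(U)$, show that $s_x(a)y = s_x(ay)$. The case $s_x(a) = a$ is vacuous, since then $s_x(a) y = a y \in M$, contradicting tangibility. Otherwise $s_x(a) = z$ is a son of $x$ with $ez = a$, so $z = x u$ for some $u \in U$; the tangibility of $z = xu$ forces $u \in \tT(U)$, because $x \cdot eU \subset eU$ would otherwise push $z$ into the ghost ideal. Now $z y = x(u y) \in \tT(U)$ forces $u y \in \tT(U)$ by the same reason, so $z y$ is itself a son of $x$. A direct computation gives $e(zy) = (ez) y = a y \in M$, and the tyranny of $x$ applied to $c := a y$ yields $s_x(a y) = z y = s_x(a) y$, which is exactly (SC2').

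Finally, to see that $s_x$ is primitive with generator $x$, I would show $\mfA(s_x) = eU \cup xU$. The inclusion $\mfA(s_x) \subset eU \cup xU$ is clear, since any element of $s_x(M)$ is either some $c \in M = eU$ or a son $z = xu$ of $x$, hence in $xU$. For the converse, given $xu \in xU$: either $xu \in eU$ (done), or $xu \in \tT(U)$, in which case $xu$ is a son of $x$ with ghost $c := e(xu) \in M$, so tyranny forces $s_x(c) = xu$, placing $xu$ in $s_x(M) \subset \mfA(s_x)$. Combining these two inclusions, $\mfA(s_x) = eU \cup xU$, and since $x = s_x(ex) \in \tT(U)$, this is a genuine primitive ig-section with generator $x$ in the sense of Definition \ref{defn:5.10}.

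The only delicate point, and the one requiring care, is the tangibility bookkeeping in (SC2'): one must twice invoke that a tangible product $x \cdot v$ forces $v$ to be tangible in order to recognize $zy = x(uy)$ as a son of $x$ before applying the tyrant hypothesis. Everything else is bookkeeping on the fibers of $\nu_U$.
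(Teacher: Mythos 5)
Your proof is correct and follows essentially the same route as the paper: verify (SC1) and (SC2$'$) using that a tangible product $s_x(a)y$ makes $zy$ a son of $x$ over $ay$, then invoke tyranny to identify it with $s_x(ay)$. The only cosmetic difference is that you check primitivity directly from Definition \ref{defn:5.10} by computing $\mfA(s_x)=eU\cup xU$, whereas the paper cites Remark \ref{rem:5.12}, which is just a reformulation of that definition.
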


\begin{proof} Proving that $s_x$ is an ig-section, it follows from Remark \ref{rem:5.12} that $s_x$ is
primitive with generator $x$.

Obviously $es_x(c) = c$ for every $c \in M$. Given $a \in M $ and
$y \in U$ with $s_x(a)y \in \tT(U)$, it remains to verify
that $s_x(a)y  = s_x(ay)$, cf. Remark \ref{rem:5.2}. Clearly
$s_x(a) \in \tT(U).$ Thus $z:=s_x(a)$ is a son of $x$, hence is
again a tyrant. Since $zy \in \tT(U), $ $zy$ is a son of $y$,
hence is  a son of $x$.  We conclude that $zy = s_x(c)$ with $c:=
ezy = ay$. On the other hand $zy = s_x(a)y$, and thus  $s_x(a)y
= s_x(ay).$
\end{proof}

\begin{thm}\label{thm:5.15} Assume that $s$ is an ig-section in
$U$. Then every $x \in s(M) \cap \tT(U)$ is a tyrant of $U$ and
$s_x \leq s, $ $s_x(a) = s(a) =x $ for $s:= ex$. More generally,
if $z$ is a son of $x$, then $s_x(ez) = s(ez).$
\end{thm}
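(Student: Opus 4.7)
The plan is to push everything through condition $(\SC2')$ from Remark~\ref{rem:5.2}, which controls the tangible products $s(a)y$. Fix $x \in s(M)\cap \tT(U)$ and set $a := ex$, so $x = s(a)$.

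First I would show that $x$ is a tyrant and, more strongly, that every son $z$ of $x$ satisfies $z = s(ez)$. Indeed, a son of $x$ has the form $z = xy = s(a)y$ with $y \in U$ and $z \in \tT(U)$, so by $(\SC2')$
$$
z \;=\; s(a)\,y \;=\; s(ay) \;=\; s(e(s(a)y)) \;=\; s(ez).
$$
This identifies $z$ uniquely in terms of its ghost $ez$, so for each $c \in M$ there is at most one son of $x$ with that ghost, namely $s(c)$; hence $x$ is tyrannic.

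Next I would derive the inequality $s_x \leq s$ from Remark~\ref{rem:5.5}: I need to show that for every $c \in M$, either $s_x(c) = s(c)$ or $s_x(c) = c$. By the definition of $s_x$ (Theorem~\ref{thm:5.14}), $s_x(c) = c$ unless $x$ has a son $z$ with $ez = c$, in which case $s_x(c) = z$. But by the displayed calculation $z = s(c)$, so in that case $s_x(c) = s(c)$, as required.

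For the supplementary statements: since $x = x \cdot 1_U$, the element $x$ is a son of itself with ghost $a$, so $s_x(a) = x = s(a)$. More generally, if $z$ is any son of $x$ then $s_x(ez) = z$ by the tyrant property, and $z = s(ez)$ by the displayed identity, giving $s_x(ez) = s(ez)$. The only mildly delicate point in the whole argument is making sure $(\SC2')$ is applicable, i.e.\ that $s(a)y$ is tangible — which is built into being a son — so I would not expect any serious obstacle here.
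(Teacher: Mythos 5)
Your proof is correct and follows essentially the same route as the paper: the key identity $z = s(a)y = s(ay) = s(ez)$ via $(\SC2')$ gives uniqueness of the son over each ghost (hence the tyrant property), and then $s_x(c)\in\{c,s(c)\}$ together with Remark~\ref{rem:5.5} yields $s_x \leq s$ and the supplementary equalities, exactly as in the paper's argument.
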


\begin{proof} Let $x \in s(M) \cap \tT(U)$ and $a:= ex,$ hence $x =
s(a).$ Assume that $z$ is a son of $x$ and $c:= ez$.  Chose $y
\in U$ with $z = xy.$ Then $c = exy =ay$ and $z= s(a)y = s(ay)=
s(c),$ which  proves that $x$ has exactly one son $z$ with $ez =c$,
hence $x$ is a tyrant. Moreover $s(ez) = s_x(ez)$  for every son
$z$ of $x$. In particular $(z=x)$, $s_x(a) =s(a).$ Given $c \in M$, if $x$ has no son $z$ with $ez = c,$ then
$s_x(c)=c$. Thus $s_x(c) \in \{c, s(c) \}$ for every $c \in M$,
proving that $s_x \leq s.$
\end{proof}

Utilizing  Proposition \ref{prop:5.8},  we
conclude the following  from Theorem \ref{thm:5.15}.

\begin{cor}\label{cor:5.16} Let $s$ be a nontrivial ig-section in
$U.$ Then $\{ s_x \ds | x \in s(M) \cap \tT(U)\}$ is the set of
all primitive ig-sections $t \leq s$. Its  supremum in the poset $\IGS(U)$ is $s$.
\end{cor}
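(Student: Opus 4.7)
The plan is to prove the two claims separately, relying on Theorems~\ref{thm:5.14}, \ref{thm:5.15} and Proposition~\ref{prop:5.8}.

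First I would establish the set-theoretic equality $\{s_x \mid x \in s(M) \cap \tT(U)\} = \{t \in \IGS(U) \mid t \text{ primitive}, \ t \leq s\}$. The inclusion $\subset$ is immediate from Theorem~\ref{thm:5.14} (which says $s_x$ is a primitive ig-section with generator $x$) together with Theorem~\ref{thm:5.15} (which gives $s_x \leq s$ whenever $x \in s(M)\cap \tT(U)$). For the reverse inclusion, let $t \leq s$ be primitive with some generator $x \in t(M) \cap \tT(U)$. Then $x = t(ex)$, and by Remark~\ref{rem:5.5}, since $t(ex) = x$ is tangible and hence $\neq ex$, we must have $t(ex) = s(ex)$; thus $x = s(ex) \in s(M) \cap \tT(U)$. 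It remains to identify $t$ with $s_x$. Both are primitive ig-sections with generator $x$, so by Remark~\ref{rem:5.12} both satisfy $t(M) \cap \tT(U) = s_x(M) \cap \tT(U) = \{\text{sons of } x\}$. Since $x$ is a tyrant, this set contains at most one element with prescribed ghost $c \in M$, and both $t(c)$ and $s_x(c)$ are forced to be that unique son when it exists and to equal $c$ otherwise. Hence $t = s_x$.

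Next I would address the supremum assertion. Since every $s_x$ satisfies $s_x \leq s$, the element $s$ is a common upper bound of the whole family $(s_x)$. In particular any two members have an upper bound in $\IGS(U)$, so Proposition~\ref{prop:5.8} applies and the supremum $s' := \bigvee_{x} s_x$ exists, with the pointwise formula $s'(a) = s_x(a)$ whenever some $s_x(a) \in \tT(U)$, and $s'(a) = a$ otherwise. I would then verify $s' = s$ pointwise on $M$. If $s(a) \in \tT(U)$, take $x := s(a) \in s(M) \cap \tT(U)$; Theorem~\ref{thm:5.15} gives $s_x(a) = s(a)$, which is tangible, so $s'(a) = s(a)$. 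If instead $s(a) = a$, suppose for contradiction that $s_x(a) \in \tT(U)$ for some $x$; then $s_x \leq s$ and Remark~\ref{rem:5.5} force $s_x(a) = s(a) = a$, contradicting tangibility. Hence $s'(a) = a = s(a)$ in this case, completing the proof.

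No serious obstacle is expected; the main thing to watch is the subtle but essential use of Remark~\ref{rem:5.5}, which provides the rigidity ``$t(a) \in \{a, s(a)\}$'' that both pins down the generator of a primitive $t \leq s$ and forces the pointwise identification in the supremum step. Everything else is straightforward bookkeeping once Theorems~\ref{thm:5.14} and~\ref{thm:5.15} are in place.
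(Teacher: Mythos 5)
Your proof is correct and follows essentially the route the paper intends: the corollary is deduced from Theorem~\ref{thm:5.15} (each $x\in s(M)\cap\tT(U)$ is a tyrant with $s_x\le s$) together with Proposition~\ref{prop:5.8}, and your identification of an arbitrary primitive $t\le s$ with some $s_x$ via Remarks~\ref{rem:5.5} and~\ref{rem:5.12} is exactly the bookkeeping the paper leaves implicit. No gaps; the only difference is that you spell out the pointwise verification $\bigvee_x s_x = s$, which the paper omits.
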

\{N.B. If $s$ is the trivial ig-section, this set is empty.\}
\begin{schol}\label{schl:5.17} Given an element $x$ of $\tT(U)$,
the following are equivalent.
\begin{enumerate}   \eroman
    \item  $x$ is tyrant of $U$,
    \item  there exists an ig-section  $s: M \to U$ with $x \in s(M),$
    \item $x$ is a generator of a primitive ig-section in $U$.
\end{enumerate}
\end{schol}

\begin{proof}
$(iii) \Rightarrow (ii)$: trivial.   $(ii) \Rightarrow (i)$: immediate  by
Theorem~\ref{thm:5.15}. $(i) \Rightarrow (iii)$: clear by Remark~
\ref{rem:5.12}.
\end{proof}

We study the effect of a fiber contraction on ig-sections and
tyrants.
\begin{prop}\label{prop:5.18} Assume that $\al: U \onto V$ is a
fiber contraction over $M = eU = eV.$
\begin{enumerate} \eroman
    \item If $s: M \to U$ is an ig-section on $U$, then $\al \circ
    s$ is an ig-section on $V$.

    \item Let $s: M \to U$ be a primitive ig-section with
    generator $x$. If $\al(x)$ is tangible, then $\al \circ s$ is
    primitive with generator $\al(x)$. Otherwise $\al \circ s$ is
    trivial.

    \item If $x$ is a tyrant of $U$ and $\al(x)$ is tangible, then
    $ \al(x)$ is a tyrant of $V$ and $s_{\al(x)} = \al \circ s_x.$
\end{enumerate}
\end{prop}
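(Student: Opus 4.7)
The plan is to handle the three parts in sequence, with the ideal $\mfA(s) = M \cup s(M)$ serving as the main bookkeeping device throughout.

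For part~(i), I would verify conditions~$\SC1$ and~$\SC2$ of Definition~\ref{defn:5.1} for the composite $\al \circ s$. Condition~$\SC1$ follows from multiplicativity of $\al$ together with the fact that $\al|_M = \id_M$ (since $\al$ is a fiber contraction over $M$): $e(\al \circ s)(a) = \al(e_U \cdot s(a)) = \al(a) = a$. For~$\SC2$, the key computation is
$$\mfA(\al \circ s) = M \cup (\al\circ s)(M) = \al(M) \cup \al(s(M)) = \al(M \cup s(M)) = \al(\mfA(s));$$
since $\mfA(s)$ is an ideal of $U$ and $\al$ is a surjective multiplicative map, $\al(\mfA(s)) \cdot V = \al(\mfA(s) \cdot U) = \al(\mfA(s))$, showing $\mfA(\al\circ s)$ is an ideal of $V$ containing $M$.

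For part~(ii), starting from the primitivity assumption $\mfA(s) = eU \cup xU$ and using surjectivity of $\al$, the identity from part~(i) yields $\mfA(\al \circ s) = M \cup \al(x) V$. If $\al(x) \in \tT(V)$, this is precisely the ideal generated by $e_V$ and $\al(x)$, so $\al \circ s$ is primitive with generator $\al(x)$ by Definition~\ref{defn:5.10}. If $\al(x) \in M$, then $\al(x) V \subset M$ (since $M$ is an ideal of $V$), hence $\mfA(\al \circ s) = M$, i.e., $(\al \circ s)(M) \subset M$. Combined with $\SC1$, which forces $e(\al \circ s)(c) = c$, and the fact that $M$ consists of ghost fixed points (so elements of $M$ coincide with their ghost images), one obtains $(\al \circ s)(c) = c$ for every $c \in M$; thus $\al \circ s$ is the trivial ig-section.

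For part~(iii), Theorem~\ref{thm:5.14} provides $s_x$ as a primitive ig-section of $U$ with generator $x$. Part~(ii), applied with $\al(x)$ tangible, then gives that $\al \circ s_x$ is a primitive ig-section of $V$ with generator $\al(x)$, and Scholium~\ref{schl:5.17} concludes that $\al(x)$ is a tyrant of $V$. A second application of Theorem~\ref{thm:5.14} exhibits $s_{\al(x)}$ as a primitive ig-section with the same generator $\al(x)$, so both $\al \circ s_x$ and $s_{\al(x)}$ share the ideal $M \cup \al(x) V$. The bijection $s \mapsto E(s) = E(V, \mfA(s))$ of Theorem~\ref{thm:5.3}(iii), which recovers an ig-section uniquely from its associated MFCE-relation (hence from its ideal), then yields $s_{\al(x)} = \al \circ s_x$. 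The main obstacle I anticipate is the boundary case $\al(x) \in M$ in part~(ii): the ideal-level collapse $\mfA(\al \circ s) = M$ has to be translated into the pointwise triviality of $\al \circ s$, and this depends on the interplay between~$\SC1$ and the absorbing behaviour of $M$ under multiplication; all remaining steps reduce to the ideal identity $\mfA(\al \circ s) = \al(\mfA(s))$ combined with the structural results already established in the section.
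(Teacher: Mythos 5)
Your proof is correct and follows essentially the same route as the paper: verify $\SC1$ and $\SC2$ for $\al\circ s$ by applying $\al$ (using that $\al$ is surjective and restricts to the identity on $M$), read off primitivity from the ideal identity $\mfA(\al\circ s)=M\cup\al(x)V$, and deduce (iii) from (ii) together with Scholium~\ref{schl:5.17}. The only difference is that you spell out two details the paper leaves implicit — the pointwise triviality of $\al\circ s$ when $\al(x)\in M$, and the uniqueness of a primitive ig-section with given generator (via the ideal and Theorem~\ref{thm:5.3}(iii)) — both of which are sound.
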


\begin{proof} (i): We verify  conditions $(\SC 1)$ and $(\SC 2)$ in
Definition \ref{defn:5.1} for the section  $s' := \al \circ s$.
Clearly $$\nu_V \circ s' = \nu _V \circ \al \circ s = \nu_U \circ
s  = \id_M.$$ Furthermore, we  have $s(M)U \subset M \cup s(M).$
Applying $\al$ we obtain $s'(M) V \subset M \cup s'(M).$
\pSkip
(ii): We work directly with Definition \ref{defn:5.10}. Assume that
$s$ is  primitive with generator ~$x$. Then $M \cup s(M) = M \cup
xU$. Applying $\al$ we obtain
$$ M \cup (\al \circ s) (M) = M \cup \al(x)V.$$
\pSkip
(iii): Let $x $ be a tyrant of $U$ and $s:= s_x.$ As just proved
$\al \circ s$ is primitive with generator~ $\al(x).$ By Scholium
\ref{schl:5.17} this means that $\al(x)$ is a tyrant of $V$ and
$\al \circ s = s_{\al(x)}.$
\end{proof}

It may happen that the supertropical monoid $U$ has no
non-trivial ig-sections,  then $U$  contains no tyrannic
elements. However, in
good cases, there exists a canonical way to produce tyrants  by dividing out tangible MFCE-relations in $U$, to
be explained in~\S\ref{sec:7}.

\section{Equalizers}\label{sec:6}

Let $U$ is a supertropical semiring  with $M := eU$, and let $S$
be an arbitrary  subset of $U$. We look for \MFCE-relations $E$ on $U$
such that $s \sim_E t$ for any two $s,t \in S$ with $es =et.$ The
subset $\mfM_S$ of $\MFC(U)$ consisting  of these relations is
certainly not empty, since it contains $E(\nu_U).$ Thus there
exists a finest such relation $E$, namely the element $\bigwedge
\mfM_S$ in the complete lattice $\MFC(U). $

\begin{defn}\label{def:6.1} If $S \subset U_a = \{x \in U \ds | ex = a\}
$ for some $a \in M$, we call $E$ the \textbf{equalizer} of the
set $S$, and write $E = \Eq(S).$ In general we call $E$ the
\textbf{fiberwise equalizer} of $S$, and write $E = \Feq(S).$
\end{defn}

Recall that for any $X \subset U$ and $c
\in M$ we write
$$ X_c := X \cap U_c = \{ x \in X \ds | ex =c\}.$$

\begin{remark}\label{rem:6.2} It is obvious from the definition that
$$ \Feq(S) = \bigvee_{c\in M } \Eq(S_c).$$
\end{remark}

Here and in all the following we do not exclude the case that $S$
(or some $S_c$) is empty. If $S = \emptyset$ then by definition
$\Feq(S) = \diag(U).$ This also holds if $S$ is a one-point set.
\pSkip

We strive for an explicit description of  $\Feq(S)$. First an easy
case.

\begin{prop}\label{prop:6.2} Assume that $a \in S \subset U_a.$
Then $\Eq(S)$ is the relation $E(U,S) := E(U, M \cup US)$ which
compresses the ideal  $M \cup US$ to ghosts (cf. \cite[Corollary
1.17]{IKR4}).
\end{prop}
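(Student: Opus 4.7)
The plan is to prove the asserted equality by mutual containment of $\Eq(S)$ and $E(U,S)$ as subsets of $U \times U$, using the universal properties of $\Eq(S)$ (as the meet of $\mfM_S$) and of the ideal compression $E(U, M \cup US)$ (as the finest MFCE-relation compressing a given ideal to ghosts, per \cite[Corollary~1.17]{IKR4}).

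First I would verify that $E(U,S) \in \mfM_S$, which immediately yields $\Eq(S) \subset E(U,S)$. For this, note that $M \cup US$ is an ideal of $U$ containing $M$: it absorbs multiplication by $U$ since $UM \subset M$ and $U \cdot US = US$. Hence $E(U, M \cup US)$ is a well-defined MFCE-relation. Because $a \in S \subset U_a$, every $s \in S$ lies in $US$ (take $u = 1$) and has $es = a$, while $a \in M$ with $ea = a$. The ideal compression identifies every element of $M \cup US$ with its ghost, so in particular $s \sim_{E(U,S)} a$ for every $s \in S$, and by transitivity $s \sim_{E(U,S)} t$ for all $s,t \in S$, placing $E(U,S)$ in $\mfM_S$.

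Next I would establish the reverse inclusion $E(U,S) \subset \Eq(S)$ by checking that every $E \in \mfM_S$ contains $E(U,S)$. Fix such an $E$. Since $a \in S$, the defining property of $\mfM_S$ gives $s \sim_E a$ for each $s \in S$. Multiplicativity of $E$ then yields
$$ us \sim_E ua \qquad \text{for all } u \in U, \; s \in S. $$
The key calculation is the identity $e(us) = u(es) = ua$, which combined with $a \in M$ (so $ua \in M$) shows that $us$ is $E$-equivalent to its own ghost $e(us) = ua$. Every $m \in M$ is trivially its own ghost, so $E$ compresses the entire ideal $M \cup US$ to ghosts. By the universal property of the ideal compression, $E \supset E(U, M \cup US) = E(U,S)$. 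Intersecting over all $E \in \mfM_S$ gives $E(U,S) \subset \Eq(S)$, completing the proof.

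I do not expect any real obstacle here; the proof is essentially a bookkeeping argument around the single identity $e(us) = ua$, which is precisely what makes the ideal compression on $M \cup US$ the right object. The only point requiring mild care is to cite the characterization of $E(U,\mfA)$ as the \emph{finest} MFCE-relation compressing $\mfA$ to ghosts (from \cite[Corollary~1.17]{IKR4}), which is what converts "$E$ compresses $M \cup US$" into "$E \supset E(U,S)$."
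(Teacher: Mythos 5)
Your proof is correct and follows essentially the same route as the paper's own (very terse) argument: any MFCE-relation equalizing $S$ identifies each $s\in S$ with the ghost $a$, and multiplicativity then forces $us\sim ua=e(us)$ for all $u\in U$, so the finest such relation is exactly the ideal compression $E(U,M\cup US)$. You have merely written out the two inclusions and the appeal to \cite[Corollary~1.17]{IKR4} explicitly, which the paper leaves implicit.
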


\begin{proof} Making the elements of $S$ equal means identifying
them with their common ghost companion $a$, then every element
of $US$ is identified with its ghost companion.
\end{proof}

\begin{defn}\label{def:6.3} An \textbf{$S$-path} $\gm$ of  \textbf{length} in $U$ is a
finite sequence $z_1, \dots, z_{n+1}$ of elements of $U$, called
the \textbf{nodes} of $\gm$, together with a sequence of triples
\begin{equation}\label{eq:3.1}
    (s_k, u_k, t_k) \in S \times U \times S, \qquad (1 \leq k \leq
    n ),
\end{equation}
called the \textbf{labels}  of $\gm$, such that $es_k = et_k$ for
$1 \leq k \leq n$ and
\begin{equation}\label{eq:3.2}
\begin{array}{lll}
  u_k s_k = z_k &   &  (1 \leq k \leq n), \\[2mm]
  u_k t_k = z_{k+1} &   &  (1 \leq k \leq n). \\
\end{array}\end{equation}
We say that  $\gm$ is an
\textbf{elementary $S$-path}, if $n=1$. Given $z,w \in U$,
$\gm$ an \textbf{$S$-path connecting $z$ to $w$} (or:
\textbf{from $z$ to $w$}) if $z =z_1, w =z_{n+1}$.
\end{defn}

Notice that, given  an $S$-path $\gm$, every ``inner'' node
$z_k$ ($2 \leq k \leq n$) is presented as a product of  an element
of $U$ and an element of $S$ in two different ways,
\begin{equation}\label{eq:3.3}
z_k = u_k s_k = u_{k-1} t_{k-1}.\end{equation} Notice also that
$ez_1 = \cdots = e z_{n+1}.$
Observe finally that our notation of an $S$-path~ $\gm$ contains a
redundancy: The sequence of labels determines the sequence of
nodes.

In the following it may be help to visualize $S$-paths by diagrams, where multiplication by an element $u \in \tT(U)$ is presented by an arrow $\overset{u}{\To}$. For example, the diagram of an $S$-path~$\gm$ with $3$ nodes ($n=2$).

$$ \hskip -2in
\xy <1cm,0cm>:
(0,0)*=0{}="+" ;
(5,0)*=0{\quad M}="*" **@{.},
(1,-0.3)*+{a_2}; (1,3.5)*+{} **@{-}
?!{"+";"*"} *{\bullet}
,
(2.5,-0.3)*+{a_1}; (2.5,3.5)*+{} **@{-}
?!{"+";"*"} *{\bullet}
,
(4,-0.3)*+{c}; (4,3.5)*+{} **@{-}
?!{"+";"*"} *{\bullet}
,
(1,0.5)*=0{\bullet}="+";
(4,1)*=0{\bullet}="*";
(0.7,0.5)*+{t_2}; (4.7,1.2)*+{z_3=w} **{}?>*@2{>}**@{-}
?!{"+";"*"} *{},(1.5,0.8)*{_{u_2}},
,
(1,1.2)*=0{\bullet}="+";
(4,1.8)*=0{\bullet}="*";
(0.7,1.2)*+{s_2}; (4.3,1.8)*+{z_2} **{}?>*@2{>}**@{-}
?!{"+";"*"} *{},(1.5,1.5)*{_{u_2}}
,
(2.5,1.8)*=0{\bullet}="+";
(4,1.8)*=0{\bullet}="*";
(2.2,1.8)*+{t_1}; (4.3,1.8)*+{z_2} **{}?>*@2{>}**@{-}
?!{"+";"*"} *{},(3.4,2)*{_{u_1}}
,
(2.5,2.6)*=0{\bullet}="+";
(4,2.6)*=0{\bullet}="*";
(2.2,2.6)*+{s_1};  (4.7,2.6)*+{z_1=z}  **{}?>*@2{>}**@{-}
?!{"+";"*"} *{}, (3.4,2.8)*{_{u_1}}
\endxy
$$

\begin{defn}\label{def:6.4}
Let $\gm$ be an $S$-path  from $z$ to $w$ with sequence of
labels \eqref{eq:3.1}. We obtain the \textbf{inverse $S$-path
$\igm$}  from $w$ to $z$ by changing the sequence of labels
\eqref{eq:3.1} to
\begin{equation*}
    (t_{n-k}, u_{n-k}, s_{n-k}) \in S \times U \times S, \qquad (0 \leq k \leq
    n-1 ).
\end{equation*}
Furthermore, if $\gm'$ is an $S$-path from $w$ to $w'$ we obtain a
path $\gm * \gm'$  from $z$ to $w'$ by juxtaposing  the
sequence of labels of $\gm'$ to the sequence of labels of $\gm$.
\end{defn}

\begin{notations}\label{notations:6.5} Henceforth we denote an
elementary $S$-path simply by its labels, and an  $S$-path $\gm$
with sequence of labels \eqref{eq:3.1} as
$$ \gm = (s_{1}, u_{1}, t_{1}) * \cdots * (s_{n}, u_{n}, t_{n}).$$
In this notation we have
$$ \igm = (t_{n}, u_{n}, s_{n}) * \cdots * (t_{1}, u_{1},
s_{1}).$$
\end{notations}

For later use we quote the following obvious fact.
\begin{rem}\label{rem:6.6} Given an $S$-path $\gm$ with sequence of
labels \eqref{eq:3.1} and an element $u' $ of $U$, we obtain a new
$S$-path
$$ u' \gm = (s_{1}, u' u_{1}, t_{1}) * \cdots * (s_{n}, u' u_{n}, t_{n}).$$
If $\gm$ connects $z$ to $w$ then $u'\gm$ connects  $u'z$ to
$u'w$.
\end{rem}

Given an element $z_0$ of the monoid ideal
$$ US := \{ us \ds | u \in U,  s \in  S \} $$
generated by $S$, and choosing $u_0 \in U$, $s_0 \in S$ with $z_0
= u_0 s_0$, we get the elementary path  $(s_0, u_0, s_0)$. When $z_0 \notin US, $ there is no $S$-path
which starts or ends at~ $z_0.$

\begin{defn}\label{def:6.7} Two elements $z,w$ of $US$ are called
\textbf{$S$-connected},  written $z \sim_S w,$  if there
exists an $S$-path  from $z$ to $w.$
\end{defn}

By the above discussion it is evident that
``$S$-connected'' is an equivalence  relation on the set $US$, whose
equivalence classes are called  the \textbf{$S$-components} of $US$
(or: of~$U$).

\begin{thm}\label{thm:6.8}
The equivalence classes of the MFCE-relation $\Feq(S)$ are the
$S$-components of $US$ and the one-element sets $\{ z \}$ with $z
\in U \sm \{ US\}.$
\end{thm}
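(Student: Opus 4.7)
My plan is to introduce the relation $R$ on $U$ whose equivalence classes are exactly the $S$-components of $US$ together with the singletons $\{z\}$ for $z \in U \setminus US$, and then prove that $R = \Feq(S)$ by establishing two inclusions: (a) $R$ is itself an MFCE-relation lying in $\mathfrak{M}_S$, hence $\Feq(S) \subset R$; and (b) every $E \in \mathfrak{M}_S$ contains $R$, hence $R \subset \Feq(S)$.

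For part (a), the fact that $R$ is an equivalence relation was already noted (``$S$-connected'' is an equivalence on $US$, extended by identity on the complement). Fiber-conservation is automatic: along any $S$-path with labels $(s_k,u_k,t_k)$ we have $ez_k = e u_k \cdot e s_k = e u_k \cdot e t_k = e z_{k+1}$, so all nodes share a common ghost. Multiplicativity is the only point requiring care: given $z \sim_R w$ via an $S$-path $\gm = (s_1,u_1,t_1) * \cdots * (s_n,u_n,t_n)$ and any $u' \in U$, Remark~\ref{rem:6.6} yields the $S$-path $u'\gm$ connecting $u'z$ to $u'w$; since $US$ is a monoid ideal, $u'z,u'w \in US$, so $u'z \sim_R u'w$. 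The case $z \in U \setminus US$ is trivial since then $w=z$. Finally, for any $s,t \in S$ with $es=et$, the elementary $S$-path $(s,1,t)$ with node sequence $s,t$ witnesses $s \sim_R t$, so $R \in \mathfrak{M}_S$.

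For part (b), fix $E \in \mathfrak{M}_S$ and argue by induction on the length $n$ of an $S$-path from $z$ to $w$. For an elementary path $(s,u,t)$ we have $z=us$, $w=ut$, $es=et$, hence $s \sim_E t$ by hypothesis, and multiplicativity of $E$ gives $us \sim_E ut$. The inductive step is immediate from transitivity applied to $\gm * \gm'$. Thus every $S$-connection is an $E$-equivalence, i.e., $R \subset E$. Taking the meet over $\mathfrak{M}_S$ in the complete lattice $\MFC(U)$ yields $R \subset \Feq(S)$.

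Combining (a) and (b) gives $R = \Feq(S)$, as required. The only real subtlety is the multiplicative step in (a), where one must check that applying $u'$ to an $S$-path produces another $S$-path with the correct endpoint structure; this is handled cleanly by the explicit node/label formulation of Definition~\ref{def:6.3} and the observation in Remark~\ref{rem:6.6}.
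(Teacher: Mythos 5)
Your proposal is correct and follows essentially the same route as the paper: both define the relation whose classes are the $S$-components of $US$ plus singletons off $US$, verify it is an MFCE-relation equalizing $S$ fiberwise (multiplicativity via Remark~\ref{rem:6.6}, fiber-conservation since every $S$-path stays in one fiber), and then show any MFCE-relation equalizing $S$ is coarser by passing from elementary paths to general ones. Your explicit induction on path length and the remark that $US$ is a monoid ideal are just slightly more detailed renderings of the same argument.
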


\begin{proof}
We define an equivalence relation $E$ on $U$ by $z \sim_E
w$  iff either $z= w$ or there exists an $S$-path from  $z$
to $w$. We learn from Remark \ref{rem:6.6} that $z \sim_E w$
implies $uz \sim_E uw$ for any $u \in U.$ As observed above, every
$S$-path runs in a fiber $U_c,$
$c \in M$. 
 Thus  $z \sim_E w$ implies also $ez
= ew$. This proves that $E$ is an MFCE-relation. If $s,t \in S,$
the elementary $S$-path $(s,1,t)$  runs form $s$ to $t$, hence $s
\sim_E t.$

Assume  that $F$ is any MFCE-relation on $U$ with $s \sim_F t$
for all $s,t \in S.$ Given an elementary $S$-path $(s,u,t)$ we
conclude form  $s \sim_F t$ that  $u s  \sim_F u t.$ It follows
that any two $S$-connectable elements of $US$ are $F$-equivalent,
hence $F$ is coarser than $E$, proving  that $E = \Feq(S). $
\end{proof}

These arguments give us a constructive proof for the
existence of the fiberwise equalizer $\Feq(S)$,  which does not use  the fact
that $\MFC(U)$ is a complete lattice.
\begin{cor}\label{cor:6.9} Let  $S$ be any subset of
$U$. The following are equivalent.
\begin{enumerate}\eroman
\item $\Feq(S)$  is ghost separating.

\item The nodes of each  $S$-path are in one of the sets
$\tT(U)$, $\tG(U)$, $\00.$

\item For every elementary $S$-path $(s,u,t)$  both $us$ and $ut$
are contained in one of the sets $\tT(U)$, $\tG(U)$, $\00.$

\item For every $u \in U$ and $c \in M$ either $uS_c \subset \tT(U)$, or $uS_c \subset
\tG(U)$, or $uS_c = \00.$
\end{enumerate}
\end{cor}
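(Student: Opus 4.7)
My plan is to establish the chain (i) $\Leftrightarrow$ (ii) $\Leftrightarrow$ (iii) $\Leftrightarrow$ (iv), with Theorem \ref{thm:6.8} providing the bridge from ghost separation to path structure. A preliminary observation I would make first: because $\Feq(S)$ is fiber-conserving, any $x \sim_{\Feq(S)} 0$ satisfies $ex = 0$ and hence $x = 0$ by the defining property of a supertropical monoid. Consequently, for $\Feq(S)$ the ghost-separating condition collapses to the symmetric statement that no $\Feq(S)$-class simultaneously contains a tangible and a nonzero ghost element.

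With this reduction in hand, (i) $\Leftrightarrow$ (ii) follows from Theorem \ref{thm:6.8}: the nontrivial classes of $\Feq(S)$ are precisely the $S$-components of $US$, singleton classes are trivially contained in one of $\tT(U)$, $\tG(U)$, $\00$, and any two elements of a common $S$-component are the endpoints of some $S$-path whose nodes all lie in that component. Hence (i) is equivalent to the requirement that the nodes of every $S$-path lie in a single member of $\{\tT(U), \tG(U), \00\}$, which is (ii). The implication (ii) $\Rightarrow$ (iii) is immediate since an elementary $S$-path has node-set $\{us, ut\}$. For (iii) $\Rightarrow$ (ii), given an $S$-path with labels $(s_k, u_k, t_k)$ for $k = 1, \ldots, n$ and nodes $z_1, \ldots, z_{n+1}$, each consecutive pair $(z_k, z_{k+1}) = (u_k s_k, u_k t_k)$ is the node-set of the elementary $S$-path $(s_k, u_k, t_k)$, so by (iii) lies in a common kind-set; transitivity then places all $z_j$ in a single such set.

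Finally, (iii) $\Leftrightarrow$ (iv) is a direct reformulation: elementary $S$-paths $(s, u, t)$ are parameterized by a choice of $u \in U$ and a pair $s, t$ in some common fiber $S_c$, so (iii) asserts exactly that $us$ and $ut$ lie in the same kind-set for every such choice, which in turn asserts that each set $uS_c$ is contained in a single one of $\tT(U)$, $\tG(U)$, $\00$. The only mildly subtle step in the whole argument is the initial reduction of ``ghost separating'' to ``no class mixes kinds'': this depends on the fiber-conservation property forcing the $\Feq(S)$-class of $0$ to be $\{0\}$. Once that is observed, the remaining implications are routine manipulations with $S$-paths.
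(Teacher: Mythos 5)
Your proof is correct and follows essentially the same route as the paper: (i) $\Leftrightarrow$ (ii) via Theorem \ref{thm:6.8}, (ii) $\Leftrightarrow$ (iii) by decomposing a path into its elementary pieces, and (iii) $\Leftrightarrow$ (iv) by comparing all products $us$, $s \in S_c$, against a fixed one. Your preliminary reduction of ``ghost separating'' to ``no class mixes a tangible with a nonzero ghost'' (using that fiber conservation forces $[0]_{\Feq(S)}=\{0\}$) is left implicit in the paper but is a correct and welcome clarification.
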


\begin{proof} (i) $\Leftrightarrow$ (ii): This is clear from
Theorem \ref{thm:6.8}.
\pSkip
(ii) $\Leftrightarrow$ (iii): Obvious.
\pSkip
(iii) $\Rightarrow$ (iv): Given $u \in U$, assume that $S_c
\neq \emptyset$, and fix some $s_0 \in S_c.$ Apply (iii) to the
elementary path $(s_0,u, s)$ with $s$ running through $S_c.$ The
element $u s_0$ is contained in one of the sets $\tT(U)$,
$\tG(U)$, $\00$. It follows from (iii) that $uS_c$ is contained in
the same set.
\pSkip
(iv) $\Rightarrow$ (iii): Fix an elementary $S$-path $(s,u,t),$ and let  $c := es =et.$ By (iv), the set $uS_c$ is contained in one of
the sets $\tT(U)$, $\tG(U)$, $\00,$ implying  that $\{ us, ut\} $
is a subset of the same set.
\end{proof}

In a similar way we handle MFCE-relations which equalize fiberwise
all members  of a family of subsets of $U$ instead of a single
subset of $U$. The following is obvious.

\begin{prop}\label{prop:6.11} Let $\mfS := (S_\lm \ds | \lm \in
\Lm)$ be a family of subsets of $U$. Then
$$ \Feq(\mfS) := \bigvee_{\lm \in \Lm} \Feq(S_\lm)$$
is the finest MFCE-relations $E$ on $U$ such that, if $s,t \in
S_\lm $ for some $\lm \in \Lm$ and $es= et $, then $s \sim_E t.$
\end{prop}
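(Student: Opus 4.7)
The plan is to unravel the two defining properties of the supremum in the complete lattice $\MFC(U)$: it is an MFCE-relation, and it is the least upper bound of the family $(\Feq(S_\lm) \ds | \lm \in \Lm)$ with respect to inclusion (recall that here ``finest'' corresponds to being \emph{smaller} as a subset of $U\times U$).

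First I would observe that, since $\MFC(U)$ is a complete lattice (cf.\ \cite[\S7]{IKR1}), the relation $\Feq(\mfS) = \bigvee_{\lm \in \Lm} \Feq(S_\lm)$ is itself an MFCE-relation on $U$. Next I would check that $\Feq(\mfS)$ has the required equalizing property: if $s,t \in S_\lm$ for some $\lm \in \Lm$ with $es = et$, then by the very definition of $\Feq(S_\lm)$ (Definition~\ref{def:6.1}) we have $s \sim_{\Feq(S_\lm)} t$, and since $\Feq(S_\lm) \subset \bigvee_{\mu} \Feq(S_\mu) = \Feq(\mfS)$, it follows that $s \sim_{\Feq(\mfS)} t$.

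It remains to verify finest-ness. Let $E \in \MFC(U)$ be any MFCE-relation for which $s \sim_E t$ whenever $s,t$ lie in a common $S_\lm$ and satisfy $es = et$. Fix $\lm \in \Lm$. Then $E$ equalizes all pairs of fiberwise-equal elements of $S_\lm$, so by the minimality property of $\Feq(S_\lm)$ we have $\Feq(S_\lm) \subset E$. Taking the supremum in the complete lattice $\MFC(U)$ over all $\lm$ yields $\Feq(\mfS) = \bigvee_\lm \Feq(S_\lm) \subset E$, which is precisely the claim that $\Feq(\mfS)$ is finer than $E$.

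There is essentially no obstacle here: the argument is purely a restatement of the universal property of suprema combined with the universal property already established for the single-set case $\Feq(S_\lm)$ in Theorem~\ref{thm:6.8}. This is why the authors characterize the statement as obvious; the only care needed is to keep straight that ``finest'' means ``smallest as a subset of $U \times U$'', which is why the correct operation on the right-hand side is the join $\bigvee$ rather than the meet.
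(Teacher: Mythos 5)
Your argument is correct and is exactly the intended one: the paper states Proposition~\ref{prop:6.11} without proof (it is called obvious there), and what is meant is precisely your unpacking, namely that each $\Feq(S_\lm)$ equalizes its own fiberwise-equal pairs and is contained in the join, while any $E$ with the stated property contains every $\Feq(S_\lm)$ by the universal property of Definition~\ref{def:6.1}, hence contains $\bigvee_\lm \Feq(S_\lm)$. Your remark that ``finest'' means smallest as a subset of $U\times U$, so that the join (not the meet) is the right operation, is also consistent with the paper's conventions.
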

This relation $ \Feq(\mfS) $ can be described by ``paths" in a way
analogous to Theorem \ref{thm:6.8}.

\begin{defn}\label{defn:6.12} An \textbf{elementary $\mfS$-path} is
a triple $(s,u,t) \in U \times U \times U$ with $es= et$ and
$\{s,t  \} \in S_\lm$ for  some $\lm \in \Lm.$ Consequentially an
$\mfS$-path $\gm$ is a sequence of such triples $(s_k,u_k,t_k)$,
 $ 1 \leq k \leq n$,  written
 $$ \gm =  (s_1,u_1,t_1) * \cdots *  (s_n,u_n,t_n),$$
 such that the equations  \eqref{eq:3.2} from above are valid.
 Again we call the elements
 $$ z_1 = u_1 s_1, \quad  z_{n+1} = u_n t_{n}, \quad
 z_k = u_k s_k =  u_{k-1} t_{k-1} \ (2 \leq k \leq n),$$
the \textbf{nodes}  of $\gm$, and say that $\gm$ runs from $z_1$
to $z_{n+1}.$
\end{defn}

Notice that all nodes $z_k$ are elements of the monoid ideal
$\bigcup_{\lm \in \Lm} U S_\lm.$ In particular, if $z_0 \in U$ is
not in this set, there exists no $\mfS$-path starting or
ending at $z_0.$

\begin{thm}\label{thm:6.13} If $z,w \in U$  and $z \neq w$, then $z$
and $w$ are $\Feq(\mfS)$-equivalent iff there exists an
$\mfS$-path  form $z$ to $w.$
\end{thm}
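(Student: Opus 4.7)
The plan is to mimic the proof of Theorem \ref{thm:6.8}, replacing single $S$-paths by $\mfS$-paths throughout, and to verify that the MFCE-relation built from $\mfS$-path connectivity coincides with the supremum $\Feq(\mfS) = \bigvee_{\lm} \Feq(S_\lm)$ described in Proposition \ref{prop:6.11}.

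First I would define a binary relation $E$ on $U$ by declaring $z \sim_E w$ iff either $z = w$ or there is an $\mfS$-path from $z$ to $w$. Reflexivity is built in; symmetry follows by reversing a path, exactly as in Definition \ref{def:6.4} generalized to $\mfS$-paths (each elementary step $(s,u,t)$ with $\{s,t\} \subset S_\lm$ has inverse step $(t,u,s)$ with $\{t,s\} \subset S_\lm$); transitivity follows by juxtaposition, since the target of one path is the source of the next. Multiplicativity is immediate from the analogue of Remark \ref{rem:6.6}: given $\gm$ with labels $(s_k,u_k,t_k)$ and any $u' \in U$, the sequence $u'\gm$ with labels $(s_k, u'u_k, t_k)$ is again an $\mfS$-path, now connecting $u'z$ to $u'w$. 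The fiber-conserving property holds because for each elementary step, $e u_k s_k = u_k\cdot e s_k = u_k \cdot e t_k = e u_k t_k$, so all nodes of any $\mfS$-path lie in a common fiber $U_c$. Hence $E \in \MFC(U)$.

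Next I would check that $E$ satisfies the equalizing condition of Proposition \ref{prop:6.11}: for $s,t \in S_\lm$ with $es = et$, the single triple $(s,1,t)$ is an elementary $\mfS$-path from $s$ to $t$, so $s \sim_E t$. For minimality, let $F$ be any MFCE-relation with this equalizing property. Given any elementary $\mfS$-path $(s,u,t)$ we have $s \sim_F t$ by hypothesis, hence $us \sim_F ut$ by multiplicativity of $F$. An easy induction on path length then yields $z \sim_F w$ whenever an $\mfS$-path runs from $z$ to $w$, so $E \subset F$. Taking $F = \Feq(\mfS)$ we get $E \subset \Feq(\mfS)$; taking the characterization of $\Feq(\mfS)$ in Proposition \ref{prop:6.11} in the other direction gives $\Feq(\mfS) \subset E$. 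Hence $E = \Feq(\mfS)$, which is exactly the statement of the theorem.

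I do not expect a serious obstacle, since the argument parallels Theorem \ref{thm:6.8} verbatim; the only point deserving slight care is checking symmetry and juxtaposition for mixed-family paths, i.e., verifying that the labels in reversed or concatenated $\mfS$-paths still come from some $S_\lm$ (which they do, since each elementary step's label pair already belongs to a single $S_\lm$, independently of other steps). Everything else, including the fiber-conserving property and the multiplicativity, transfers without modification from the single-family case.
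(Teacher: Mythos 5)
Your proof is correct, but it follows a different route than the paper's. The paper deduces Theorem \ref{thm:6.13} from Theorem \ref{thm:6.8}: since $\Feq(\mfS)=\bigvee_{\lm\in\Lm}\Feq(S_\lm)$ is the equivalence relation \emph{generated} by the relations $\Feq(S_\lm)$ (cf. \cite[\S7]{IKR1}), two distinct elements $z,w$ are $\Feq(\mfS)$-equivalent iff they are joined by a finite chain $z=x_1,\dots,x_N=w$ whose consecutive members are $\Feq(S_{\lm(i)})$-equivalent; each link is then converted into an $S_{\lm(i)}$-path by Theorem \ref{thm:6.8} and the paths are juxtaposed, while conversely an $\mfS$-path is cut into its elementary pieces, each an $S_\lm$-path for a single $\lm$, which (after discarding the pieces with equal endpoints) produces such a chain. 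You instead repeat the proof of Theorem \ref{thm:6.8} in the family setting: you verify directly that $\mfS$-path connectivity defines an MFCE-relation $E$ which equalizes each $S_\lm$ fiberwise, and that every MFCE-relation with this equalizing property is coarser than $E$, whence $E=\Feq(\mfS)$ by the characterization in Proposition \ref{prop:6.11}. Your version is self-contained and constructive; in particular it re-establishes the existence of the finest equalizing relation without invoking the chain description of suprema of equivalence relations, at the cost of redoing for families the verifications (reversal, juxtaposition, multiplicativity via $u'\gm$, fiber conservation) already carried out for a single $S$, whereas the paper's reduction is shorter because it reuses Theorem \ref{thm:6.8} wholesale. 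The points you single out for care do go through exactly as you say: each elementary step of a reversed or concatenated path keeps its label pair inside one $S_\lm$, and $es_k=et_k$ forces all nodes of a path to lie in one fiber.
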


\begin{proof} Since $\Feq(\mfS) = \bigvee _{\lm \in \Lm} \Feq
(S_\lm)$ is the equivalence relation on $U$ generated by the
relations $\Feq(S_\lm)$ (cf. \cite[\S7]{IKR1}), the given elements
$z \neq w$ are $\Feq(\mfS)$-equivalent iff there exists a sequence
$z = x_1, x_2, \dots, x_N = w$ in $U$ and indices $\lm(1), \dots,
\lm(N-1)$ in $\Lm$ such that $x_i \neq x_{i+1}$ for $1 \leq i \leq
N-1$ and
 \begin{equation}\renewcommand{\theequation}{$*$}\addtocounter{equation}{-1}\label{eq:str.1}
x_i \ds{\sim_{\Feq(S_{\lm(i)})} } x_{i+1} \qquad (1 \leq i \leq
N-1).
 \end{equation}
 If this holds, then by Theorem \ref{thm:6.8} there exists an
 $S_{\lm(i)}$-path $\gm_i$  form $x_i$ to $x_{i+1}$, and $\gm : = \gm_1 *
 \cdots *
 \gm_{N}$ is an $\mfS$-path from $z$ to $w.$

 Conversely, given an $\mfS$-path $\gm$ from $z$ to $w$ with
 nodes $z_1, \dots, z_{n+1}$, write
$\gm : = \gm_1 * \cdots  *
 \gm_{n}$, where each  $\gm_k$ is elementary from $z_k$ to $z_{k+1}.$
Omitting the $\gm_k$ with $z_k = z_{k+1}$, we  are in the
situation $(*)$, with $N = n+1$, $x_i = z_i.$
\end{proof}

In complete analogy to the proof of Corollary \ref{cor:6.9} we
obtain
\begin{cor}\label{cor:6.14}
Let $\mfS$ be a family of subsets of $U$. The following are
equivalent:
\begin{enumerate} \eroman
    \item $\Feq (\mfS)$ is ghost separating.

    \item The nodes of each $\mfS$-path are  in one of the sets
    $\tT(U)$, $\tG(U),$ $\00$.

    \item For every $u \in U,$ $c \in M,$ $\lm \in \Lm$ the set
    $u(S_\lm)_c$  is contained in one of the sets  $\tT(U)$, $\tG(U),$
    $\00$.
\end{enumerate}

\end{cor}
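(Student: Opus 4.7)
The plan is to mimic the proof of Corollary \ref{cor:6.9} verbatim, replacing every invocation of Theorem \ref{thm:6.8} and the notion of $S$-path by Theorem \ref{thm:6.13} and the notion of $\mfS$-path. I expect no new substantive obstacle, since every ingredient used for Corollary \ref{cor:6.9} already has an $\mfS$-analogue set up in Definition \ref{defn:6.12} and Theorem \ref{thm:6.13}.

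For (i)$\Leftrightarrow$(ii), I would first observe that the nodes of any $\mfS$-path $\gm$ lie in a common fiber $U_c$: from $es_k = et_k$ one deduces $ez_k = eu_k \cdot es_k = eu_k \cdot et_k = ez_{k+1}$ for each $k$. If $c = 0$, then every node equals $0 \in \00$ by the axiom $ex = 0 \Rightarrow x = 0$ (Definition \ref{defn:1.1}). If $c \neq 0$, then each node lies in $\tT(U) \cup \tG(U)$, any two distinct nodes are $\Feq(\mfS)$-equivalent by Theorem \ref{thm:6.13}, and no node is equivalent to $0$ (since MFCE-relations are fiber conserving). Thus (i) forbids a tangible node from being equivalent to a ghost node, forcing (ii). Conversely, if (ii) holds and $x \in \tT(U)$ with $x \not\sim_{\Feq(\mfS)} 0$ were equivalent to some ghost $y$, then $x \neq y$ and Theorem \ref{thm:6.13} would provide an $\mfS$-path from $x$ to $y$ whose endpoints sit in different sets among $\tT(U), \tG(U), \00$, contradicting (ii).

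For (ii)$\Leftrightarrow$(iii), assume (ii) and fix $u \in U$, $c \in M$, $\lm \in \Lm$; if $(S_\lm)_c = \emptyset$ there is nothing to show, otherwise fix $s_0 \in (S_\lm)_c$ and apply (ii) to each elementary $\mfS$-path $(s_0, u, s)$ with $s \in (S_\lm)_c$ to see that $us$ shares the kind of $us_0$, whence $u(S_\lm)_c$ is confined to a single set among $\tT(U), \tG(U), \00$. Conversely, given (iii), any elementary $\mfS$-path $(s, u, t)$ with $s, t \in S_\lm$ and $es = et = c$ has its two nodes $us, ut$ in $u(S_\lm)_c$, hence in the same kind; decomposing a general $\mfS$-path as $\gm_1 * \cdots * \gm_n$ with each $\gm_k$ elementary, this propagates through every consecutive pair of nodes and yields (ii).

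The only mild subtlety I foresee is keeping $\{0\}$ in mind as a third possible kind and handling the $c = 0$ case, which is immediate from the defining axiom of a supertropical monoid recalled above. Otherwise the argument is completely parallel to Corollary \ref{cor:6.9} and essentially bookkeeping.
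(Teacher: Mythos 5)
Your proof is correct and follows exactly the route the paper intends: the paper proves Corollary \ref{cor:6.14} simply by declaring it ``in complete analogy to the proof of Corollary \ref{cor:6.9}'', and your argument is precisely that analogy carried out, with Theorem \ref{thm:6.13} replacing Theorem \ref{thm:6.8} and the same fiber/kind bookkeeping for the equivalences (i)$\Leftrightarrow$(ii) and (ii)$\Leftrightarrow$(iii).
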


\section{Producing isolated tangibles and tyrants}\label{sec:7}
As before $U$ is a supertropical semiring and $M:= eU$.
\begin{notations}\label{notation:7.1}  $ $
\begin{enumerate} \ealph
    \item Given $x,z \in U,$ we say that $x$ \textbf{divides} $z$
    (in $U$), written $x | z$, if $z \in Ux.$ Otherwise we write
    $x \nmid z$. (N.B. If $a,c \in M$, then divisibly of $c$ by $a$
    means the same in $M$ as in~ $U$, since $c \in Ua$ iff $c \in Ma$.)
    \item We  define
    $$ [z:x]_U := [z:x] = \{ u\in U \ds | ux =z\}. $$
    Of course, if $x \nmid z$  then $[z:x] = \emptyset.$

    \item Given $x \in \tT(U),$ we define
    $$ \SS(x) := (Ux) \cap \tT(U),$$
    and call, as already done in \S\ref{sec:2}, the elements
    of $\SS(x)$ the \textbf{sons} of $x$. The sons $z \neq x $ are said to be \textbf{proper sons} of  $x$.  We let
    $$\SS_c(x):= \SS(x)_c = (Ux)_c \cap \tT(U),$$
and call the elements of $\SS_c(x)$ the \textbf{sons of $x$ over
$c$}.

\item We denote the fiberwise equalizer $\Feq(\SS(x))$ (cf.
\S\ref{sec:3}) by $\T(x)$ and the equalizer $\Eq(\SS_c(x))$ by
$\T_c(x).$
\end{enumerate}
\end{notations}

We explicate the meaning of these MFCE-relations, first of
$\T_c(x)$ and then of $\T(x).$ Studying $\T_c(x)$ we may assume that $c
\in \tT(U)a,$ since otherwise $\SS_c(x)$ is empty. The following
remains true  without  this assumption.

\begin{prop}\label{prop:7.2} Let $x \in \tT(U)$ and $c \in M.$
\begin{enumerate} \ealph
    \item $\T_c(x)$ is the finest MFCE-relation $E$ on $U$ such that
    $[x]_E$ is ghost or $[x]_E$ is tangible and has at most one son
    over $c$ in $U/ E$. (Recall that we  identify $M/E = M.$)

    \item More explicitly we have the following alternative.

\begin{description}
    \item[Case I] $\T_c(x)$ is the finest MFCE-relation $E$ on $U$
    such that $[x]_E$ is tangible and has exactly one son over
    $c$ in $U/ E.$
    \item[Case II] There exists no such relation $E$. Then $\T_c(x)= E(U,\SS_c(x)).$
\end{description}
\end{enumerate}\end{prop}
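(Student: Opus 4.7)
The plan is to analyze $\T_c(x) = \Eq(\SS_c(x))$ concretely via the $\SS_c(x)$-path description of \thmref{thm:6.8}, then split on a single dichotomy. By that theorem and fiber-conservation, the non-trivial $\T_c(x)$-equivalence classes are the $\SS_c(x)$-components of the ideal $U \cdot \SS_c(x)$; let $K_0$ denote the component containing $\SS_c(x)$, so $K_0 \subset U_c$. The entire proof pivots on whether $c \in K_0$ or $c \notin K_0$.

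If $c \in K_0$, then $K_0 = [c]_{\T_c(x)}$ is a ghost class. Multiplicative closure $us \sim_{\T_c(x)} uc = e(us)$ for all $u \in U$ and $s \in \SS_c(x)$ then collapses the entire ideal $M \cup U \cdot \SS_c(x)$ fiberwise to ghosts, so $\T_c(x) \supseteq E(U, \SS_c(x))$. The reverse inclusion is immediate since $E(U, \SS_c(x))$ already identifies $\SS_c(x)$ with $c$ (\propref{prop:6.2}), giving $\T_c(x) = E(U, \SS_c(x))$. To complete Case~II, I must check that no MFCE-relation $E$ on $U$ yields $[x]_E$ tangible with exactly one son over $c$: the $\SS_c(x)$-path witnessing $c \in K_0$ consists of elementary links $u_k s_k \leftrightarrow u_k t_k$ with $s_k, t_k \in \SS_c(x)$, and any $E$ giving a unique tangible son must, via multiplicative closure applied successively along the path, identify $c$ with the supposed tangible son --- impossible for tangible $[x]_E$.

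If $c \notin K_0$, then $K_0 \subset \tT(U) \cap U_c$ is a tangible class in $U/\T_c(x)$. Any tangible son $[z]$ of $[x]_{\T_c(x)}$ over $c$ in $U/\T_c(x)$ lifts to some $z \in (Ux)_c \cap \tT(U) = \SS_c(x) \subset K_0$, hence $[z] = K_0$; so $[x]_{\T_c(x)}$ has exactly one son over $c$, realizing Case~I. For the minimality claim, let $E$ be any MFCE-relation with $[x]_E$ tangible and exactly one son $[z_0]_E$ over $c$. Each $s \in \SS_c(x)$ has $[s]_E$ equal either to the tangible $[z_0]_E$ or to the ghost $c$; the decisive step is to rule out mixed behavior by propagating any absorption $s \sim_E c$ along the $\SS_c(x)$-path joining $s$ to any other $t \in K_0 \supseteq \SS_c(x)$ via multiplicativity, forcing $t \sim_E c$ and contradicting tangibility of $[t]_E$. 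Hence all $s \in \SS_c(x)$ coincide in $U/E$, giving $\T_c(x) \subset E$.

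Part~(a) follows from~(b) by noting that the condition ``$[x]_E$ is ghost, or $[x]_E$ is tangible with at most one son over $c$'' reduces to ``$[x]_E$ has at most one son over $c$'' (a ghost class in $U/E$ has no tangible sons), and is realized by $\T_c(x)$ in both cases as the finest such MFCE-relation. The main obstacle I anticipate is the propagation step in Case~I: transporting a single absorption $s \sim_E c$ through an $\SS_c(x)$-path requires care, since each elementary link carries two a priori distinct labels in $\SS_c(x)$ whose $E$-equivalence is not known in advance, so the propagation must proceed by induction on path length, repeatedly invoking multiplicative closure and fiber-conservation to chain the identifications.
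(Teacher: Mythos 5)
Your Case~II identity $\T_c(x)=E(U,\SS_c(x))$ when $c$ lies in the component $K_0$ is argued correctly, and the existence half of Case~I is fine. The genuine gap is exactly the step you yourself single out as decisive: the claim that an MFCE-relation $E$ with $[x]_E$ tangible and exactly one son over $c$ cannot show ``mixed behavior'', i.e.\ cannot absorb some elements of $\SS_c(x)$ into the ghost $c$ while keeping one tangible class. No induction along $\SS_c(x)$-paths can establish this, because the claim is false. Take $M=\{0<e<c\}$ with $c^2=c$, and $U=M\cup\{1,x,u,v,s,t\}$ with all of $1,x,u,v,s,t$ tangible, $ex=e$, $eu=ev=es=et=c$, $1$ the unit, $x^2=x$, $xu=s$, $xv=t$, $xs=s$, $xt=t$, and every product of two elements of the fiber $U_c=\{u,v,s,t,c\}$ equal to $c$ (products with ghosts are then forced). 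One checks this is a supertropical semiring, and $\SS_c(x)=\{s,t\}$, so $\T_c(x)$ has the single nontrivial class $\{s,t\}$. But the ideal compression $E:=E(U,M\cup Us)$, whose single nontrivial class is $\{s,c\}$, is an MFCE-relation for which $[x]_E$ is tangible with exactly one son over $c$ (namely $[t]_E$), yet $E\not\supseteq\T_c(x)$: the mixed behavior you must exclude actually occurs. This breaks the minimality argument of your Case~I, the nonexistence claim in your Case~II (which invokes the same propagation), and hence also your deduction of part~(a).

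The example also explains why the paper's own proof goes a different and much shorter way: it only records that $([x]_E\cdot(U/E))_c=\{[z]_E \mid z\in\SS_c(x)\}\cup\{c\}$ for every MFCE-relation $E$, and then reads the defining property of the equalizer $\T_c(x)=\Eq(\SS_c(x))$ through this identity; there ``at most one son over $c$'' is effectively shorthand for ``all the classes $[z]_E$, $z\in\SS_c(x)$, coincide''. That statement is immediate from the displayed identity, whereas the literal condition ``$[x]_E$ tangible with exactly (or at most) one tangible son over $c$'' is strictly weaker, as the example shows, and is not characterized by $\T_c(x)$. So rather than trying to bridge the two readings by propagating identifications along paths (no such bridge exists), prove the equalizer reading directly: your path machinery from \thmref{thm:6.8} is then only needed, as in your Case~II, to decide whether the common class of $\SS_c(x)$ in $U/\T_c(x)$ is tangible or equals $c$, which is the true content of the Case~I/Case~II dichotomy.
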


\begin{proof} These claims are essentially trivial. First observe
that for any MFCE-relation $E$ on~ $U$ we have
$$ ([x]_E \cdot (U/E) )_c = \{ [ux]_E \ds | u \in U, \, eux=c \} = \{ [z]_E \ds | z \in \SS_c(U)\} \cup \{ c \}. $$
Thus $\T_c(x) = \Eq(\SS_c(x))$ is the finest MFCE-relation $E$ such
that either $[x]_E$ is ghost, then $[z]_E= c$ for all $z \in
\SS_c(x)$), or $[x]_E$ is tangible and has at most one son over $c$.
Case~ II happens iff $\T_c(x)$ identifies all elements of $\SS_c(x)$
with the ghost $c$. Proposition \ref{prop:6.2} gives
$\T_c(x) = E(U, \SS_c(x)).$
\end{proof}
By analogous arguments we obtain
\begin{prop}\label{prop:7.4}
Let $x \in \tT(U)$.
\begin{enumerate} \ealph
    \item $\T(x)$ is the finest MFCE-relation $E$ on $U$, such
    that $[x]_E$ is a ghost, or
    $[x]_E$ is a tyrant in $U/E $ (i.e.,  $[x]_E$ is tangible, and for any $c\in M$ has  at most one son
    over ~$c$, cf.~
    \S\ref{sec:2}).

    \item More explicitly we have the following cases.

\begin{description}
    \item[Case I] $\T(x)$ is the finest MFCE-relation $E$ on $U$
    such that $[x]_E$ is a tyrant in~$U/ E.$
    \item[Case II] There exists no such relation. Then $\T(x)= E(U,\SS(x)).$
\end{description}
\end{enumerate}
\end{prop}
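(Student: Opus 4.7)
The plan is to mimic the proof of Proposition~\ref{prop:7.2} step by step, lifting from a single fiber to all fibers at once by means of the identity
\[
\T(x) \;=\; \Feq(\SS(x)) \;=\; \bigvee_{c \in M} \Eq(\SS_c(x)) \;=\; \bigvee_{c \in M} \T_c(x)
\]
(cf.\ Remark~\ref{rem:6.2}). As in Proposition~\ref{prop:7.2}, the starting observation is that for any $E \in \MFC(U)$ and any $c \in M$ one has
\[
\bigl([x]_E \cdot (U/E)\bigr)_c \;=\; \{[z]_E : z \in \SS_c(x)\} \cup \{c\},
\]
so the tangible sons of $[x]_E$ in $U/E$ over $c$ arise only from classes of elements of $\SS_c(x)$, and ``$[x]_E$ is ghost, or a tyrant'' translates into a fiberwise identification condition on $\SS(x)$.

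For part~(a), I would first verify that $\T(x)$ itself satisfies the property. By the definition $\T(x) = \Feq(\SS(x))$, any two $z_1, z_2 \in \SS(x)$ with $ez_1 = ez_2$ are $\T(x)$-equivalent, so in $U/\T(x)$ over each $c \in M$ the classes coming from $\SS_c(x)$ collapse to a single element. Since $x \in \SS_{ex}(x)$, this forces either $[x]_{\T(x)}$ to be tangible with at most one son over each $c$ (a tyrant), or to be a ghost. For minimality, if $E \in \MFC(U)$ has $[x]_E$ ghost or tyrant then for each $c \in M$ the coset $[x]_E$ is ghost or has at most one son over $c$ in $U/E$, so Proposition~\ref{prop:7.2} gives $\T_c(x) \subset E$; taking the join yields $\T(x) \subset E$.

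Part~(b) then follows by the usual dichotomy. In Case~I, if some $E$ makes $[x]_E$ a tyrant, part~(a) gives $\T(x) \subset E$; the induced fiber contraction $U/\T(x) \twoheadrightarrow U/E$ sends ghosts to ghosts, so $[x]_{\T(x)}$ cannot be ghost, hence is itself a tyrant, and $\T(x)$ is evidently the finest such. In Case~II, part~(a) forces $[x]_{\T(x)}$ to be ghost, so $x \sim_{\T(x)} ex$; multiplying by arbitrary $u \in U$ gives $ux \sim_{\T(x)} eux$, whence every $z \in U\SS(x)$ is equated with its ghost by $\T(x)$ and so $E(U,\SS(x)) \subset \T(x)$. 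Conversely, $E(U,\SS(x))$ collapses each $z \in \SS_c(x)$ to the ghost $c$ and therefore equalizes $\SS(x)$ fiberwise, so by minimality of $\Feq$ one obtains $\T(x) \subset E(U,\SS(x))$, and equality follows.

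The main obstacle I anticipate is the minimality half of~(a): one must ensure that ``$[x]_E$ has at most one son over $c$'' in $U/E$ genuinely forces $\T_c(x) \subset E$, which requires careful bookkeeping of tangible versus ghost images of $\SS_c(x)$. This is precisely the content of Proposition~\ref{prop:7.2}, so invoking it as a black box is the cleanest route; the rest of the argument is essentially bookkeeping via the join decomposition of $\T(x)$.
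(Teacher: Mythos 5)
Your proof is correct, and it is exactly the ``analogous arguments'' to Proposition~\ref{prop:7.2} that the paper invokes without writing out: the identity for $([x]_E\cdot(U/E))_c$, the join decomposition $\T(x)=\bigvee_{c\in M}\T_c(x)$ from Remark~\ref{rem:6.2}, and in Case~II the double inclusion with $E(U,\SS(x))$ playing the role that Proposition~\ref{prop:6.2} plays fiberwise. Nothing is missing; the minimality step via Proposition~\ref{prop:7.2} and the ghost/tyrant dichotomy in $U/\T(x)$ are handled correctly.
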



\begin{thm}\label{thm:7.4} Let $x \in \tT(U),$ $a := ex.$ The
following are equivalent.
\begin{enumerate}\eroman
    \item $[x]_{\T(x)}$ is ghost in $U/\T(x).$

    \item There exists an elementary $\SS(x)$-path $(vx,u,wx)$
     from a son $z= uvx$ of $x$ over $a$ to $ez = a.$

    \item There exist elements $u,v,w$ of $U$ with $evx = ewx,$ $euvx =
    ex,$ $wx \in \tT(U),$ $uvx \in \tT(U),$ $uwx \in M.$
\end{enumerate}
\end{thm}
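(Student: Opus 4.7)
\medskip
\noindent
\textbf{Proof plan.} The plan is to show that (ii) and (iii) are mere reformulations of each other, and then to establish (i) $\Leftrightarrow$ (iii) by interpreting ``$[x]_{\T(x)}$ is ghost'' through the $\SS(x)$-path description of $\T(x) = \Feq(\SS(x))$ supplied by Theorem \ref{thm:6.8}.

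First, for (ii) $\Leftrightarrow$ (iii): an elementary $\SS(x)$-path $(vx, u, wx)$ requires $vx, wx \in \SS(x) \subset \tT(U)$ with $evx = ewx$, and has the two nodes $uvx$ and $uwx$. Saying that the path runs from a son $uvx$ of $x$ over $a$ to $ez = a$ translates exactly into $uvx \in \tT(U)$, $euvx = ex$, and $uwx = a \in M$, which is (iii). Conversely, (iii) does not list $vx \in \tT(U)$ explicitly, but this follows from $uvx \in \tT(U)$: if $vx$ were ghost then $uvx = e(uvx)$ would be ghost too, a contradiction.

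For (iii) $\Rightarrow$ (i): from (iii), the triple $(vx, u, wx)$ is an elementary $\SS(x)$-path from $uvx$ to $uwx$; since $uwx \in M$ and $e(uwx) = u \cdot evx = e(uvx) = a$, we in fact have $uwx = a$. Because $x, uvx \in \SS(x)$ and $ex = euvx = a$, the triple $(x, 1, uvx)$ is also an elementary $\SS(x)$-path, going from $x$ to $uvx$. Their concatenation is an $\SS(x)$-path from $x$ to $a$, so Theorem \ref{thm:6.8} yields $x \sim_{\T(x)} a$, i.e., $[x]_{\T(x)} = [a]_{\T(x)}$, which is ghost in $U / \T(x)$.

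For (i) $\Rightarrow$ (iii): assume $[x]_{\T(x)}$ is ghost, so $x \sim_{\T(x)} a$; since $x \in \tT(U)$ and $a \in M$ we have $x \neq a$, and Theorem \ref{thm:6.8} supplies an $\SS(x)$-path with nodes $x = z_1, z_2, \dots, z_{n+1} = a$. Every node satisfies $ez_k = a \neq 0$ (using the supertropical axiom $ez = 0 \Rightarrow z = 0$ in $U$), so each $z_k$ lies in $\tT(U) \cup \tG(U)$. Since $z_1 \in \tT(U)$ and $z_{n+1} \in \tG(U)$, some index $k$ has $z_k$ and $z_{k+1}$ of different kind; the corresponding elementary label $(s_k, u_k, t_k) = (v' x, u_k, w' x)$ then yields (iii) upon setting $u := u_k$ and choosing $(v, w) := (v', w')$ or $(w', v')$ so that $uvx \in \tT(U)$ and $uwx \in M$. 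The main point requiring care is ensuring that no intermediate node of the path is zero — this is precisely what the condition $a \neq 0$ together with the axiom $ez = 0 \Rightarrow z = 0$ deliver, so the tangible-to-ghost transition is forced to occur inside a single elementary sub-path.
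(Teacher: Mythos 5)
Your proof is correct and follows essentially the same route as the paper: both translate (i) via the $\SS(x)$-path description of $\T(x)=\Feq(\SS(x))$ from Theorem \ref{thm:6.8}, prove (ii)/(iii) $\Rightarrow$ (i) by prepending the elementary path $(x,1,uvx)$, and prove (i) $\Rightarrow$ (ii)/(iii) by extracting the crucial elementary sub-path from a path joining $x$ to $a$. The only (harmless) difference is that the paper takes a path of minimal length and uses its last step, whereas you locate a tangible-to-ghost transition along an arbitrary path and possibly swap the labels.
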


\begin{proof} (i) $\Rightarrow$ (ii): (i) means that there exists
an $\SS(x)$-path
$$ \gm = (v_1x, u_1,w_1 x) * \cdots * (v_nx, u_n,w_n x) $$
 from $x$ to $ex$ (cf. Theorem \ref{thm:6.8}). We choose
$\gm$ of minimal length. Then $u_n v_n  x \in \tT(U)$, $u_n w_n x
\in M.$ We have $u_1 v_1 x = x,$ hence
$$ e u_i v_i x = e u_i w_i x = ex = a$$
for each $i \in \{ 1, \dots, n\}.$ Thus  $z:= u_n v_n x$ is a son
of $x$ over $a$, and $(v_n x, u_n, w_n x)$ is an elementary
$\SS(x)$-path  from $z$ to $ez = a$.
 \pSkip
(ii) $\Rightarrow$ (i): $\gm:= (x,1,z) * (vx,u ,wx)$ is an
$\SS(x)$-path  form $x$ to $a.$ Thus $[x]_{\T(x)} =
[ex]_{\T(x)}$ $(=a)$ is ghost.
\pSkip
(ii) $\Leftrightarrow$ (iii): Condition (iii) means that
$(vx,u,wx)$ is an $\SS(x)$-path from $z = uvx$ to $ez$, and $z$ is a
son of $x$ over $a.$
\end{proof}

\begin{schol}\label{schol:7.5} We can reformulate the equivalence
(i) $\Leftrightarrow$ (ii) in Theorem \ref{thm:7.4} as:
Given $x \in \tT(U),$ the element $[x]_{\T(x)}$ is tangible, hence
a tyrant, iff for all $u,v,w \in U$ with $evx = ewx$, $euvx = ex,$
and both $wx$ and $uvx$   tangible, also the product $uwx$ is
tangible.
\end{schol}

Exploiting the contents of Corollary \ref{cor:6.9} for $\SS = \SS(x)$,
we obtain a criterion that $[z]_{\T(x)}$ is tangible for every son
$z$ of $x$.

\begin{thm}\label{thm:7.6}
 Let $x \in \tT(U).$  The
following are equivalent.
\begin{enumerate} \eroman
    \item The MFCE-relation $\T(x)$ is tangible.

    \item For every son $z$ of $x$ in $U$ the element $[z]_{\T(x)}$
    is tangible in $U/ \T(x)$ (and hence a tyrant).

    \item  If $u,v,w \in U$ and  $evx = ewx,$  $uvx \in \tT(U),$ $wx \in \tT(U),$ then
    $uwx \in \tT(U).$
\end{enumerate}
\end{thm}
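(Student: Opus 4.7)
The plan is to derive both (i)$\Leftrightarrow$(ii) and (i)$\Leftrightarrow$(iii) from the explicit $\SS(x)$-path description of $\T(x) = \Feq(\SS(x))$ provided in \S\ref{sec:6}, in particular Theorem \ref{thm:6.8} and Corollary \ref{cor:6.9}. The parenthetical ``and hence a tyrant'' in (ii) then follows from Proposition \ref{prop:7.4}(a).

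For (i)$\Rightarrow$(ii), fix a son $z \in \SS(x) \subset \tT(U)$. Since $\T(x)$ is an MFCE-relation it preserves fibers of $\nu_U$, and $ez \neq 0$ forces $z \not\sim_{\T(x)} 0$. Ghost separation of $\T(x)$ then prohibits $z$ from being equivalent to any ghost, so $[z]_{\T(x)}$ meets neither $\tG(U)$ nor $\00$, i.e.\ is tangible in $U/\T(x)$. For (ii)$\Rightarrow$(i), note that by Theorem \ref{thm:6.8} the non-singleton classes of $\T(x)$ are the $\SS(x)$-components of $U\SS(x)$, and $U\SS(x) \subset U \cdot Ux \subset Ux$. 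Hence any $y \in \tT(U)$ whose $\T(x)$-class is not a singleton lies in $\tT(U) \cap Ux = \SS(x)$, i.e.\ is already a son of $x$; tangibles outside $U\SS(x)$ have singleton (automatically tangible) classes. Applying (ii) to the former case shows every tangible class of $\T(x)$ is tangible, which is (i).

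For (i)$\Leftrightarrow$(iii), Corollary \ref{cor:6.9}(iii) applied to $S = \SS(x)$ reformulates (i) as: for every elementary $\SS(x)$-path $(s,u,t)$, both $us$ and $ut$ lie in a common one of $\tT(U), \tG(U), \00$. Now an elementary $\SS(x)$-path is precisely a triple $(vx,u,wx)$ with $vx, wx \in \tT(U)$ and $evx = ewx$, which are exactly the hypotheses of (iii); so the reformulated (i) plainly implies (iii). Conversely, assume (iii). By the symmetric roles of $v$ and $w$, applying (iii) to the swapped path also gives $uwx \in \tT(U) \Rightarrow uvx \in \tT(U)$, yielding the equivalence $uvx \in \tT(U) \Leftrightarrow uwx \in \tT(U)$. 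Moreover $e(uvx) = u(evx) = u(ewx) = e(uwx)$, so $uvx = 0 \Leftrightarrow uwx = 0$. Combining the two equivalences places $uvx$ and $uwx$ into the same one of $\tT(U), \tG(U), \00$, which is the condition of Corollary \ref{cor:6.9}(iii), and hence (i).

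The only real subtlety here is bridging the gap between the two-way split (``tangible vs.\ non-tangible'') of (iii) and the three-way split (``$\tT(U)$ vs.\ $\tG(U)$ vs.\ $\00$'') required by Corollary \ref{cor:6.9}; this is settled by the shared-ghost identity $e(uvx) = e(uwx)$, which forces $uvx$ and $uwx$ to vanish together. Everything else is a direct rereading of the $\SS(x)$-path machinery from \S\ref{sec:6}.
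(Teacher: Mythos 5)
Your argument is correct and is essentially the paper's own proof: (i)$\Leftrightarrow$(ii) by noting that tangible elements outside $U\SS(x)$ have singleton $\T(x)$-classes so that only the sons of $x$ matter, and (i)$\Leftrightarrow$(iii) by translating Corollary~\ref{cor:6.9}(iii) for $S=\SS(x)$, where your symmetry and ``vanish together'' observations do the same work as the paper's reduction to the case $ux\in\tT(U)$. The one point to make explicit in the direction (i)$\Rightarrow$(iii) is that the hypothesis $uvx\in\tT(U)$ already forces $vx\in\tT(U)$ (since $eU$ is an ideal of $U$), so that $(vx,u,wx)$ really is an elementary $\SS(x)$-path; this is immediate and does not affect correctness.
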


\begin{proof}
(i) $\Leftrightarrow$ (ii): This is a consequence of a general
fact for TE-relations. If $E$ is a TE-relation on $U$ and $z \in
U$ then $[z]_E$ is ghost iff $[z]_E = e [z]_E = [ez]_E$, i.e., $z
\sim_E ez$. Thus $[z]_E$ is tangible for every $z \in \tT(U)$ iff
$z \nsim_E ez$ for every $z \in \tT(U),$ which means that $E$ is
tangible (= ghost separating) when $E$ is MFCE. In the present
case, where $E =\T(x),$ we may focus on the elements $z \in \SS(x),$
since for $z \notin \SS(x)$ we have $[z]_E = \{ z\},$ hence
 $z \nsim_E ez$ for $z$ tangible.

\pSkip
(ii) $\Leftrightarrow$ (iii): By (i) $\Leftrightarrow$ (iii) of
Corollary \ref{cor:6.9} we know that $\T(x)$ is ghost separating
iff, for any $u,v,w \in U$ with $vx, wx \in \tT(U),$ $evx = ewx,$
the set $\{ uvx, uwx\}$ is contained in $\tT(U)$ or in $M$. Now,
if $ux \in M,$ then trivially $\{ uvx, uwx\} \subset M.$ Thus we
may focus on elements~$u$ with $ux\in \tT(U)$, and see that
condition (iii) in Corollary \ref{cor:6.9} translates to condition
(iii)  in the present claim.
\end{proof}

\begin{examp}\label{examp:7.7}
Let $x \in \tT(U).$ Assume that the product of any two sons of $x$
in $U$ is tangible. Then, for every son $z$ of $x$,  $[z]_{\T(x)}$
is a tyrant in $U/\T(x)$. Indeed,  we  conclude from $uvx \in
\tT(U)$ and $wx \in \tT(U),$ that $uvx \cdot wx \in \tT(U),$ hence
$uwx \in \tT(U).$
\end{examp}
\pSkip

We turn  to a property of tangible elements, which is weaker
than being a tyrant, but seems to have equal
importance.

\begin{defn}\label{defn:7.8}
Let $x \in \tT(U)$ and $a:= ex.$
\begin{enumerate} \ealph
    \item We call $x$ \textbf{isolated} (in $U$) if $\SS_a(x) = \{
    x\}$, i.e., $x$ has no proper son over $a$.

    \item We call $\T_a(x) = \Eq(\SS_a(x))$ the \textbf{isolating
    MFCE-relation} (on $U$) \textbf{for $x$} and  denote it by $\Is(x).$
\end{enumerate}
\end{defn}

Proposition \ref{prop:7.2} tells us that $\Is(x)$ is the finest
MFCE-relation $E$ on $U$, such that either $[x]_E \in \tT(U/E)$
and $[x]_E$ is isolated (Case I), or $[x]_E$ is ghost (Case II).
When the former case holds, we say that \textbf{$x$ can be
isolated} (in $U$), otherwise we say that~$x$ \textbf{cannot be
isolated}. In
the latter case  $\Is(x) = E(U,x)$.
\begin{thm}\label{thm:7.9}
Let $x \in \tT(U)$ and $a:= ex.$ The following are equivalent.
\begin{enumerate} \eroman
    \item $x$ cannot be isolated in $U$.

    \item There exist $u,v,w \in [a:a]_U$ such that
    $uvx \in \tT(U),$  $wx \in \tT(U),$ but $uwx \in M$ (hence
    $uwx=a$).
\end{enumerate}

\end{thm}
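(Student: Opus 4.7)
The plan is to reformulate the claim as a question about $\SS_a(x)$-paths and then solve it in complete parallel with Theorem \ref{thm:7.4}. By Proposition \ref{prop:7.2}(b) applied with $c=a$, the statement ``$x$ cannot be isolated in $U$'' is equivalent to $\Is(x)=\Eq(\SS_a(x))$ falling into Case II, i.e., to $[x]_{\Is(x)}$ being ghost, which amounts to $x\sim_{\Is(x)} a$. Since $\SS_a(x)\subset U_a$, Remark \ref{rem:6.2} gives $\Is(x)=\Feq(\SS_a(x))$, and because $x\neq a$, Theorem \ref{thm:6.8} translates (i) into the existence of an $\SS_a(x)$-path from $x$ to $a$. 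Both implications then become path-construction problems.

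For (ii)$\Rightarrow$(i), I would first check that each of $x$, $vx$, $wx$, $uvx$ lies in $\SS_a(x)$. Tangibility of $vx$ follows from $uvx\in\tT(U)$ by contraposition, and tangibility of $wx$ and $uvx$ is given. The ghost-value conditions $e(vx)=e(wx)=e(uvx)=a$ come from the hypotheses $u,v,w\in[a:a]_U$ via the elementary observation that, for any $r\in U$ and $y\in M$, the product $ry$ already lies in $M$ and thus equals $(er)y$; hence $ry=a$ is equivalent to $(er)y=a$, and this propagates through iterated products. With these memberships established, the sequence of labels
\[ (x,\,1,\,uvx)\,*\,(vx,\,u,\,wx) \]
is a genuine $\SS_a(x)$-path with consecutive nodes $x$, $uvx$, $uwx$. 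The same identity forces $e(uwx)=a$, and together with $uwx\in M$ this gives $uwx=a$, so Theorem \ref{thm:6.8} delivers $x\sim_{\Is(x)} a$.

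For (i)$\Rightarrow$(ii), I would select an $\SS_a(x)$-path $\gm=(s_1,u_1,t_1)*\cdots*(s_n,u_n,t_n)$ from $x$ to $a$ of \emph{minimal} length, imitating the opening step of the proof of Theorem \ref{thm:7.4}. Minimality forces the penultimate node $z_n=u_n s_n$ to lie in $\tT(U)$, for otherwise $z_n=a$ and the path could be truncated. Writing $s_n=vx$ and $t_n=wx$ (legitimate since $s_n,t_n\in Ux$) and setting $u:=u_n$, the last edge immediately supplies $uvx=z_n\in\tT(U)$, $wx=t_n\in\tT(U)$, and $uwx=z_{n+1}=a\in M$. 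It remains to promote the ghost-value data---$e(vx)=e(wx)=a$ (because $s_n,t_n\in U_a$) and $e(uvx)=a$ (because every node of $\gm$ sits in $U_a$)---into the multiplicative conditions $u,v,w\in[a:a]_U$, using the same identity $ry=(er)y$ valid whenever $ry\in M$.

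The main obstacle is the bookkeeping needed to bridge the two languages: the path combinatorics yields information about the \emph{ghosts} of various products, while (ii) is phrased in terms of the \emph{multiplicative} equalities $ua=va=wa=a$. The reconciliation rests on a single short observation: multiplication of $r\in U$ by $y\in M$ is controlled entirely by the ghost part of $r$, so $ry=a$ is already a purely ghost-level condition.
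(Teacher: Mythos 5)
Your proposal is correct and follows essentially the same route as the paper: the implication (i)$\Rightarrow$(ii) via a minimal-length $\SS_a(x)$-path with $u=u_n$, $v=v_n$, $w=w_n$ is exactly the paper's argument, and your ghost-to-multiplicative ``promotion'' ($e(vx)=v(ex)=va$, etc.) is the same computation. For (ii)$\Rightarrow$(i) the paper simply notes $x\sim_E vx\sim_E wx$ and multiplies by $u$ to get $x\sim_E uvx\sim_E uwx=a$, whereas you package the same identifications as an explicit two-step $\SS_a(x)$-path; this is only a cosmetic difference.
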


\begin{proof} (ii) $\Rightarrow$ (i): Let $E := \Is(x) =
\Eq(\SS_a(x))$. Since $uvx, vx, wx \in \SS_a(x)$, we conclude~ that $$ x
\ds{\sim_E} uvx \ds{\sim_E} uwx \ds{\sim_E} a.$$
\pSkip
(i) $\Rightarrow$ (ii): There exists an $\SS_a(x)$-path
$$ \gm = (v_1x, u_1, w_1 x) * \cdots * (v_nx, u_n, w_n x)$$
of shortest length $n$ connecting $x$ to $a$.  Then $u_n v_n x \in
\tT(U)$ and $u_n w_n x =a.$ The elements $v_i x$, $w_ix$ lie in
$\SS_a(x)$ and $x = u_1 v_1 x \in \SS_a(x).$ It follows that $a = v_i
a = w_i a,$ i.e., $v_i, w_i \in [a:a].$ Furthermore
$$ a \ds = u_1 v_1 a \ds = u_1 w_1 a \ds = \cdots \ds = u_n w_n a, $$
and thus  all $u_i \in [a:a].$ Condition (ii) holds
with $u = u_n, $  $v = v_n$, $w= w_n.$
\end{proof}

\begin{examp}\label{examp:7.10} Let $a \in M \sm \00.$ Assume that
the set $[a:a]_U \cap \tT(U)$ is closed under multiplication. Then
condition (ii) in Theorem \ref{thm:7.9} is violated for every $x
\in \tT(U)_a.$ Thus every $x \in \tT(U)_a$ can be isolated.
\end{examp}

It turns out that the isolating relation $\Is(x)$ does not alter
if we replace $x$ by an associated element (cf. Definition
\ref{defn:5.13}).

\begin{lem}\label{lem:7.11} Assume that $x, x' \in \tT(U)$ are
associated, i.e., $x | x'$ and $x' | x.$ Then, for any~ $u \in U$,
\begin{enumerate} \eroman
    \item $ux \in \tT(U) \Leftrightarrow ux' \in \tT(U),$
    \item $eux = ex \Leftrightarrow eux' = ex'.$
\end{enumerate}
\end{lem}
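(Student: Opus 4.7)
The plan is to unpack the hypothesis as $x' = vx$ and $x = wx'$ for some $v, w \in U$, and then verify both equivalences by direct computation, using that $\nu_U \colon U \to M$ is a monoid homomorphism and that $M = eU$ is an ideal of the monoid $U$.

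For part (ii), I would apply $\nu_U$ to the relation $x' = vx$ to obtain $ex' = (ev)(ex)$, and hence $eux' = (eu)(ev)(ex) = (ev)(eux)$; symmetrically, $ex = (ew)(ex')$ and $eux = (ew)(eux')$. From $eux = ex$ it then follows that $eux' = (ev)(eux) = (ev)(ex) = ex'$, and conversely from $eux' = ex'$ that $eux = (ew)(eux') = (ew)(ex') = ex$.

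For part (i), I would use that $eU$ is an ideal of the monoid $U$: if $ux \in eU$, then $ux' = u(vx) = v(ux) \in eU$, and symmetrically, if $ux' \in eU$, then $ux = u(wx') = w(ux') \in eU$. Since $U = \tT(U) \cup eU$ is a disjoint union by definition of $\tT(U)$, this yields $ux \in \tT(U) \Leftrightarrow ux' \in \tT(U)$.

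I do not foresee any serious obstacle; the lemma is essentially a formal consequence of associativity, commutativity, the homomorphism property of $\nu_U$, and the ideal property of $eU$. The only minor subtlety is that divisibility in Notations~\ref{notation:7.1}.(a) is phrased in terms of $U$ rather than $\tT(U)$, but since $x, x' \in \tT(U)$ the cofactors $v, w$ are automatically tangible (a ghost cofactor would yield a ghost product, contradicting $x, x' \in \tT(U)$), so no confusion arises between the two notions of divisibility.
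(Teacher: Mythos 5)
Your proof is correct and follows essentially the same route as the paper's: the paper likewise writes $x'=vx$, $x=v'x'$, proves only the forward implications (the converses following by symmetry), and uses exactly your two ingredients, namely that $eU$ is a multiplicative ideal for (i) and the computation $eux'=euvx=(eux)v=(ex)v=ex'$ for (ii). Your closing remark about the tangibility of the cofactors is harmless but not needed, since neither argument uses it.
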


\begin{proof}
In both claims it suffices to verify the direction $\Rightarrow.$
Given elements $v,v'$ of $U$ with $x' = vx,$ $x = v' x'$, if $ux
\in \tT(U),$ then $uv'x' \in \tT(U),$ hence $ux' \in \tT(U).$ If
$eux = ex,$ then
$$ eux' \ds = euvx \ds = eux \cdot v \ds = exv \ds = ex'.$$ \vskip -7mm
\end{proof}

\begin{prop}\label{prop:7.12} Assume that $x' \in \tT(U)$ is
associated to $x \in \tT(U).$ Then
\begin{enumerate} \eroman
    \item  $\Is(x) = \Is(x'),$
    \item In particular, $x$ is isolated iff $x'$ is isolated,
    \item $x$ can be isolated iff $x'$ can be isolated.
\end{enumerate}
\end{prop}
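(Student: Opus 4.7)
The plan is to exploit \lemref{lem:7.11} systematically to transfer information between $x$ and $x'$ via fixed witnesses $v, v' \in U$ satisfying $x' = vx$ and $x = v'x'$. The main subtlety to confront is that $ex$ and $ex'$ may well differ, so the sets $\SS_{ex}(x) \subset U_{ex}$ and $\SS_{ex'}(x') \subset U_{ex'}$ live in \emph{distinct} fibers and cannot be identified set-theoretically; the equality $\Is(x) = \Is(x')$ must therefore be obtained by transferring MFCE-equivalences rather than by equating sets of sons.

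For (i), I prove $\Is(x) \subseteq \Is(x')$ via the universal property of $\Is(x) = \Eq(\SS_{ex}(x))$: it suffices to show that any two elements of $\SS_{ex}(x)$ are $\Is(x')$-equivalent. Given $s_i = \alpha_i x \in \SS_{ex}(x)$ for $i=1,2$, \lemref{lem:7.11} ensures that $s_i' := \alpha_i x'$ lies in $\SS_{ex'}(x')$ (tangibility and the fiber condition $eux = ex \iff eux' = ex'$ transfer), hence $s_1' \sim_{\Is(x')} s_2'$. Multiplying this relation by $v'$ and invoking commutativity together with $v'x' = x$ yields $v' s_i' = \alpha_i v' x' = \alpha_i x = s_i$, so $s_1 \sim_{\Is(x')} s_2$. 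The reverse inclusion is symmetric, yielding (i).

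For (ii), suppose $x$ is isolated and let $u \in U$ satisfy $ux' \in \tT(U)$ and $eux' = ex'$; \lemref{lem:7.11} transports these conditions to $ux \in \tT(U)$ and $eux = ex$, so $ux \in \SS_{ex}(x) = \{x\}$ and hence $ux = x$. Multiplying by $v$ and using $vx = x'$ gives $ux' = u(vx) = v(ux) = vx = x'$, so $x'$ is isolated; the converse is symmetric. For (iii), set $E := \Is(x) = \Is(x')$; if $x \sim_E ex$ then multiplication by $v$ yields $x' = vx \sim_E v \cdot ex = ev \cdot ex = ex'$ (the middle step uses that one factor is a ghost), so $[x]_E$ being ghost forces $[x']_E$ ghost. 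Both converses are symmetric, proving that $x$ can be isolated iff $x'$ can be isolated.

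I foresee no genuine obstacle beyond correctly deploying \lemref{lem:7.11}: once the tangibility-and-ghost-image translation is in place, every step reduces to a one-line manipulation using commutativity, MFCE-compatibility with multiplication, and the identities $vx = x'$, $v'x' = x$. The only trap worth flagging in the write-up is the temptation to claim $ex = ex'$ from association, which fails in general (e.g.\ whenever $M$ is non-cancellative); the argument above sidesteps this by never requiring $s_i$ and $s_i'$ to coincide.
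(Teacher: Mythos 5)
Your proposal is correct and follows essentially the same route as the paper: both rest on Lemma~\ref{lem:7.11} to transport sons between the fibers $U_{ex}$ and $U_{ex'}$, on multiplicativity of MFCE-relations together with the universal (finest-relation) property of $\Eq$ for part (i), and on the ghost criterion for the classes $[x]_E$, $[x']_E$ for part (iii). The only cosmetic differences are that the paper packages (i) as the set identity $\SS_{ex'}(x') = v\,\SS_{ex}(x)$ rather than arguing elementwise, and derives (ii) from $\Is(x)=\Is(x')=\diag(U)$ instead of your direct transfer argument; your extra care about $ex \neq ex'$ is well placed and consistent with the paper.
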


\begin{proof} Choose again $v,v' \in U$ with
$ x' = vx,$ $x = v' x'$, and let $a:= ex,$ $a' := ex'.$

\pSkip
(i): We learn from Lemma \ref{lem:7.11}  that  $\SS_{a'}(x') = v
\SS_a(x').$ Thus  $\Eq(\SS_a(x)) \supset \Eq(\SS_{a'}(x')).$
By symmetry these  equivalence relations are equal,  i.e., $\Is(x) =
\Is (x').$
\pSkip
(ii): In this situation
$ \Is(x) = \Is (x') = \diag(U).$
\pSkip
(iii): Let $E:=  \Is(x) = \Is (x').$ From $x | x'$ and $x' | x$ we
conclude that $[x]_E \ds | [x']_E$ and $[x']_E \ds  | [x]_E$. Thus
$[x]_E $ is tangible iff $[x']_E$ is tangible.
\end{proof}

  We are interested in cases where a tangible element and all its
sons are simultaneously isolated.

\begin{defn}\label{defn:7.13} Let $x\in \tT(U)$. We  write
$$ \Sis(x) := \bigvee_{z\in \SS(x)} \Is(z), $$
and call it  the \textbf{son isolating
MFCE-relation for $x$} (on $U$).
\end{defn}

In other terms, $\Sis$ is the fiberwise equalizer $\Feq(\mfS(x))$
of the family
$$ \mfS(x) := \{ \SS_c(z) \ds | z \in \SS(x), c = ez\} $$
of subsets of $U.$  This is the finest MFCE-relation $E$ on $U$
such that for every son $z$ of $x$ the element $[z]_E$ is either
ghost or isolated tangible in $U/ E$. 

\begin{rem}\label{defn:7.14}
We have the following cases.
\begin{description}\dispace
    \item[Case I] $[x]_{\Sis(x)}$ is tangible. Then all sons of
    $[x]_{\Sis(x)}$  in  $U / {\Sis(x)}$ are isolated.

    \item[Case II]  $[x]_{\Sis(x)}$ is ghost. Then
    ${\Sis(x)} = E(U,x).$
\end{description}
\end{rem}

\begin{notations}\label{notation:7.15} $ $
\begin{enumerate} \ealph
    \item If $z \in \tT(U),$ $c:= ez$, we call an $\SS_c(z)$-path
    also an
    \textbf{$\Is(z)$-path}. Thus, if $z$ is a son of $ x \in
    \tT(U)$, $z = px$, an elementary  $\Is(z)$-path is a triple
    $(vpx,u,
    wpx)$ with $u,v,w,p \in U,$ $vpx \in \tT(U),$ $wpx \in
    \tT(U),$ and $evpx = ewpx = epx.$
    \item If $x \in \tT(U)$, we call an $\mfS(x)$-path (cf. Definition \ref{defn:6.12}) also
    an \textbf{$\Sis(x)$-path}.  Thus, an elementary  $\Sis(x)$-path is an
elementary $\Is(z)$-path for some son $z = px$ of $x$.
\end{enumerate}
\end{notations}

As in the proof of  Theorem  \ref{thm:6.13}, but
replacing $\mfS$-paths by $\mfS(x)$-paths, we obtain

\begin{thm}\label{thm:7.16}
Let $x \in \tT(U)$ and $a:= ex.$ The following are equivalent.
\begin{enumerate} \eroman
    \item $[x]_{\Sis(x)}$  is ghost in   $U / {\Sis(x)}$.

    \item There exists a son $z$ of $x$ and an elementary
    $\Is(z)$-path $(vz,u, wz)$ from a son $uvz$ of~$z$ over~$a$ to
    $a$. (N.B. $vz,wz \in \tT(U)$.)

    \item There exists  $u,v,w,p \in U$ with $epx = evpx = ewpx,$
    $euvpx = a$, $uvpx \in \tT(U)$, $wpx \in \tT(U)$, $uwpx= a.$
\end{enumerate}
\end{thm}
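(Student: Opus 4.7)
The plan is to parallel the proof of Theorem \ref{thm:7.4}, replacing the $\SS(x)$-paths and the characterization via Theorem \ref{thm:6.8} by the $\mfS(x)$-paths of Definition \ref{defn:6.12} and the characterization of $\Feq(\mfS(x)) = \Sis(x)$ supplied by Theorem \ref{thm:6.13}.

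For (i)~$\Rightarrow$~(ii): the hypothesis says $x \sim_{\Sis(x)} a$, and $x \in \tT(U)$ together with $a \in M$ forces $x \neq a$, so Theorem \ref{thm:6.13} yields an $\mfS(x)$-path
$$ \gm = (v_1 y_1, u_1, w_1 y_1) * \cdots * (v_n y_n, u_n, w_n y_n) $$
from $x$ to $a$, where each $y_k \in \SS(x)$ and both $v_k y_k, w_k y_k$ lie in $\SS_{e y_k}(y_k)$. I choose $\gm$ of minimal length $n$. Then $u_n w_n y_n = a$, and the preceding node $u_n v_n y_n$ has ghost value $a$; if $u_n v_n y_n = a$, truncating $\gm$ at that node would give a shorter $\mfS(x)$-path from $x$ to $a$ (and for $n=1$ this would mean $x = u_1 v_1 y_1 = a$, impossible). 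Hence $u_n v_n y_n$ is tangible, and since it lies in $U y_n \subset U x$, it is a son of $x$ (and of $y_n$) over $a$. Setting $z := y_n$, $u := u_n$, $v := v_n$, $w := w_n$ gives the desired elementary $\Is(z)$-path.

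For (ii)~$\Rightarrow$~(i): given the elementary $\Is(z)$-path $(vz, u, wz)$ from $uvz$ to $a$, I concatenate it with the elementary $\Is(x)$-path $(x, 1, uvz)$, which is legitimate because $x$ and $uvz$ both lie in $\SS_a(x)$. The resulting $\mfS(x)$-path runs from $x$ to $a$, so by Theorem \ref{thm:6.13} we have $x \sim_{\Sis(x)} a$, i.e., $[x]_{\Sis(x)}$ is ghost.

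Finally, (ii)~$\Leftrightarrow$~(iii) is a direct translation: writing $z = p x$ for the son of $x$ in (ii), the conditions that $(v z, u, w z)$ be an elementary $\Is(z)$-path running from a son $uvz$ of $x$ over $a$ to $a$ unwind to precisely the conditions listed in (iii). The only small point to watch is that $u v p x \in \tT(U)$ automatically forces $v p x \in \tT(U)$, and in turn $p x \in \tT(U)$ (so $z$ is genuinely a tangible son of $x$), because in any supertropical monoid a ghost (or zero) factor produces a ghost (or zero) product. The main obstacle, such as it is, is the minimality bookkeeping in (i)~$\Rightarrow$~(ii)---ensuring that $u_n v_n y_n$ is tangible and handling the degenerate $n=1$ case---but this is modeled directly on the analogous step in the proof of Theorem \ref{thm:7.4}.
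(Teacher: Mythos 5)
Your proof is correct and is essentially the argument the paper intends: the paper only remarks that Theorem \ref{thm:7.16} follows by repeating the argument of Theorem \ref{thm:7.4} with $\SS(x)$-paths replaced by $\mfS(x)$-paths (via Theorem \ref{thm:6.13}), and your write-up fills in exactly those details, including the minimal-length truncation step and the concatenation $(x,1,uvz)*(vz,u,wz)$, together with the correct observation that $uvpx\in\tT(U)$ forces $px,vpx\in\tT(U)$ in the (ii)$\Leftrightarrow$(iii) translation.
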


Arguing  again  as in the proof of  Theorem  \ref{thm:6.13},
we obtain

\begin{thm}\label{thm:7.17}
Let $x \in \tT(U)$. The following are equivalent.
\begin{enumerate} \eroman
    \item The MFCE-relation ${\Sis(x)}$  is tangible.

    \item For every son $z$ of $x$ in $U$ the element
    $[z]_{\Sis(x)}$ is tangible in $U/ \Sis(x)$ (and hence
    isolated).

    \item If  $u,v,w,p$ are elements of $U$ with $ evpx = ewpx = epx,$
    $uvpx \in \tT(U)$, $wpx \in \tT(U)$, then $uwpx \in \tT(U).$
\end{enumerate}
\end{thm}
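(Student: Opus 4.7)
The plan is to parallel the proof of Theorem \ref{thm:7.6}, replacing the single-set tool Corollary \ref{cor:6.9} by its family analogue Corollary \ref{cor:6.14}, applied to the family $\mfS(x) = \{ \SS_c(z) \ds | z \in \SS(x), c = ez \}$, whose fiberwise equalizer is $\Sis(x)$ by Definition \ref{defn:7.13}.

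For (i) $\Leftrightarrow$ (ii), I would first observe that every node of an $\mfS(x)$-path lies in $Ux$. Indeed, by Definition \ref{defn:6.12} any node has the form $u's$ with $s \in \SS_{ez}(z) \subset Uz$ for some $z \in \SS(x) \subset Ux$, so $u's \in Ux$. Since $\Sis(x)$ is fiber conserving, its class $[0]_{\Sis(x)}$ equals $\{0\}$, so no tangible element is $\Sis(x)$-equivalent to $0$. Combined with Theorem \ref{thm:6.13}, this shows that for any $z' \in \tT(U) \setminus \SS(x)$, the class $[z']_{\Sis(x)}$ is the singleton $\{z'\}$, in particular tangible. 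Hence $\Sis(x)$ is ghost separating (= tangible as MFCE-relation) iff for every son $z \in \SS(x)$ the class $[z]_{\Sis(x)}$ is tangible in $U/\Sis(x)$, which is precisely the tangibility clause of (ii). The ``and hence isolated'' addendum is automatic: if $z \in \SS(x)$ and $uz \in \tT(U)$ with $e(uz) = ez$, then $uz \in \SS_{ez}(z)$, so by $\Is(z) = \Eq(\SS_{ez}(z))$ we have $uz \sim_{\Is(z)} z$, and since $\Is(z) \subset \Sis(x)$ this yields $[uz]_{\Sis(x)} = [z]_{\Sis(x)}$.

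For (i) $\Leftrightarrow$ (iii), I would apply Corollary \ref{cor:6.14} to $\mfS(x)$: ghost separation of $\Sis(x)$ is equivalent to the condition that for every $u \in U$, every $c \in M$, and every $z \in \SS(x)$, the set $u \cdot (\SS_{ez}(z))_c$ is contained in one of $\tT(U), \tG(U), \00$. Since every element of $\SS_{ez}(z)$ has ghost exactly $ez$, the subset $(\SS_{ez}(z))_c$ is empty when $c \neq ez$ and equals $\SS_{ez}(z)$ when $c = ez$. Writing an arbitrary son as $z = px$ and a typical element of $\SS_{epx}(px)$ as $vpx$ with $vpx \in \tT(U)$ and $evpx = epx$, the Corollary \ref{cor:6.14} condition translates to: for all $u, v, w, p \in U$ with $vpx, wpx \in \tT(U)$ and $evpx = ewpx = epx$, the pair $\{ uvpx, uwpx \}$ lies in a common one of $\tT(U), \tG(U), \00$. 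Condition (iii) is the tangible-side implication read off this statement; the reverse direction (extracting the full Corollary \ref{cor:6.14} condition from (iii)) is obtained by interchanging $v$ and $w$, showing that the pair cannot split as one tangible and one non-tangible.

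The only real subtlety is the bookkeeping: the family $\mfS(x)$ is indexed by sons $z = px$ of $x$, while each set $\SS_{ez}(z)$ at index $z$ consists of the sons of $z$ over $ez$, i.e., elements of the form $vpx \in \tT(U)$ with $evpx = epx$. Once this parsing is correctly carried out, the translation from the path-based criterion of Corollary \ref{cor:6.14} to the explicit product condition (iii) is entirely mechanical, mirroring the passage from Corollary \ref{cor:6.9} to Theorem \ref{thm:7.6}.
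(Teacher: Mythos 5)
Your proof is correct and follows the route the paper intends: Theorem \ref{thm:7.17} is stated there with the proof only indicated (``arguing again as before''), the intended argument being exactly the adaptation of the proof of Theorem \ref{thm:7.6} with Corollary \ref{cor:6.9} replaced by Corollary \ref{cor:6.14} applied to the family $\mfS(x)$ whose fiberwise equalizer is $\Sis(x)$, which is what you carry out, including the correct handling of the parenthetical ``and hence isolated'' via $\Is(z)\subset\Sis(x)$. The one point you gloss over in the direction (iii)$\Rightarrow$(i) --- the case where both $uvpx$ and $uwpx$ are non-tangible --- is harmless, since $evpx=ewpx$ then gives $uvpx=euvpx=euwpx=uwpx$, the same gloss the paper itself makes in the proof of Theorem \ref{thm:7.6}.
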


Of course $ \Is(x) \subset  \Sis(x)$. We now exhibit a good case,
where these two MFCE-relations coincide. This means that for every
son $z$ of $x$ the element $[z]_{\Is(x)}$ is either ghost or
tangible in $U/ \Is(x)$.

\begin{thm}\label{thm:7.18}
Let $x \in \tT(U)$ and $a := ex.$ Assume that  the submonoid
$\tT(U)e$ of $M$ admits the cancellation hypothesis:
for any $b,c,d \in \tT(U)e$
$$abd = acd \dss \Rightarrow ab= ac.$$
(N.B. This certainly holds if $\tT(U)e$  is cancellative.) Then
$$ \Is(x) \ds = \Sis(x).$$
\end{thm}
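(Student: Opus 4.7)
The inclusion $\Is(x) \subset \Sis(x)$ is immediate, since $x$ is itself a son of $x$; the content of the theorem is therefore the reverse inclusion. The plan is to prove $\Is(z) \subset \Is(x)$ for every son $z \in \SS(x)$, after which $\Sis(x) = \bigvee_{z \in \SS(x)} \Is(z) \subset \Is(x)$ follows from the definition of the supremum in $\MFC(U)$.

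Fix a son $z = px$ of $x$. Since $z \in \tT(U)$, $p$ must itself lie in $\tT(U)$ (otherwise $p \in M$ would force $z = px$ to be ghost). By the definition of $\Is(z) = \Eq(\SS_{ez}(z))$ as the finest MFCE-relation equating the set $\SS_{ez}(z)$, it suffices to show that $\Is(x)$ itself equates all elements of $\SS_{ez}(z)$. I will in fact prove that every such element is $\Is(x)$-equivalent to $z$.

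Take an arbitrary $z' = y'z = y'px \in \SS_{ez}(z)$, so that $y'px \in \tT(U)$ and $ey'px = epx$. The decisive step is to establish $y'x \in \SS_a(x)$. First, $y'$ itself must be tangible, since $y' \in M$ would force $y'px$ to be ghost. Hence $ey'$, $e$, and $ep$ all lie in $\tT(U)e$, and in the commutative semiring $M$ the equation $ey'px = epx$ reads $a \cdot (ey') \cdot (ep) = a \cdot e \cdot (ep)$. Applying the cancellation hypothesis with $b := ey'$, $c := e$, $d := ep$ yields $a \cdot ey' = a \cdot e = a$, equivalently $e(y'x) = a$. Moreover $y'x$ cannot be ghost, since otherwise $y'px = (y'x)p$ would also be ghost, contradicting $y'px \in \tT(U)$. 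Therefore $y'x \in \SS_a(x)$.

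With this in hand, $y'x \sim_{\Is(x)} x$ directly from $\Is(x) = \Eq(\SS_a(x))$, and multiplying both sides by $p$ (using that $\Is(x)$ is multiplicative) gives $z' = y'px \sim_{\Is(x)} px = z$. This completes the proof of $\Is(z) \subset \Is(x)$, and hence of the theorem. The main obstacle is the correct setup of the cancellation hypothesis; once supertropical absorption forces $p$ and $y'$ into $\tT(U)$, the deduction $e(y'x) = a$ is mechanical, and the argument proceeds uniformly without splitting into the Case~I / Case~II dichotomy of Proposition~\ref{prop:7.4}.
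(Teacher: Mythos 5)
Your argument is correct. The decisive step is the same as in the paper's proof: from $e y'px = e px$ you use the cancellation hypothesis (with $b = ey'$, $c = e$, $d = ep$, all legitimately in $\tT(U)e$ because $1, y', p$ are forced to be tangible) to conclude $e(y'x) = a$, i.e.\ a son of $z = px$ over $ez$ pulls back to a son of $x$ over $a$. What differs is the packaging. The paper stays inside its \S6 path machinery: it converts an elementary $\Sis(x)$-path $(vpx,u,wpx)$ into the $\Is(x)$-path $(vx,up,wx)$ with the same nodes, so every witness of $\Sis(x)$-equivalence becomes a witness of $\Is(x)$-equivalence, giving $\Sis(x)\subset\Is(x)$ directly. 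You instead bypass paths altogether: you show that $\Is(x)$ already equates the generating set $\SS_{ez}(z)$ of $\Is(z)=\Eq(\SS_{ez}(z))$ (each $y'px$ is $\Is(x)$-equivalent to $px$ by multiplicativity, since $y'x\sim_{\Is(x)}x$), hence $\Is(z)\subset\Is(x)$ by the universal property of the equalizer, and then $\Sis(x)=\bigvee_{z\in\SS(x)}\Is(z)\subset\Is(x)$ from the definition of the join in $\MFC(U)$. Your route is slightly more structural and self-contained modulo the definitions, and it makes explicit the sonwise refinement $\Is(z)\subset\Is(x)$ for every son $z$ of $x$ (which in the paper's treatment only appears a posteriori from $\Is(z)\subset\Sis(x)=\Is(x)$); the paper's route has the advantage of being uniform with the $S$-path technique used throughout \S6--\S7, e.g.\ in Theorems \ref{thm:7.4}, \ref{thm:7.9} and \ref{thm:7.16}. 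You are also right that no appeal to the Case~I/Case~II dichotomy is needed for the equality of the two relations.
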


\begin{proof}
Let $\gm = (vpx,u, wpx)$ be an elementary $\Sis(x)$-path
with $u,v,w,p \in U$, $vpx, wpx \in \tT(U)$, $ evpx = ewpx = epx.$ Then, by the  cancellation hypothesis, $evx = ewx = ex$.
Thus $\tlgm:= (vx,up,wx)$ is an $\Is(x)$-path with the same pair
of nodes as $\gm.$ It follows that, given an $\Sis(x)$-path
 form $z \in U$ to $w \in U,$ there also exists an $\Is(x)$-path
 from $z$ to $w.$ Thus $\Sis(x) \subset \Is(x)$, which
trivially implies $\Sis(x) = \Is(x)$.
\end{proof}

\end{document}